\documentclass[12pt]{article}
\usepackage[english]{babel}
\usepackage[letterpaper,top=2cm,bottom=2cm,left=3cm,right=3cm,marginparwidth=1.75cm]{geometry}
\usepackage{graphicx}
\usepackage{graphics}
\usepackage{amsfonts}
\usepackage{amscd}
\usepackage{amsthm}
\usepackage{indentfirst}
\usepackage[all]{xy}
\usepackage{amssymb, amsmath, amsthm, amsgen, amstext, amsbsy, amsopn}
\usepackage{epic,eepic}
\usepackage{enumitem} 
\usepackage{color}
\usepackage{ dsfont }
\usepackage[colorlinks=true, allcolors=blue]{hyperref}
\usepackage{verbatim}
\usepackage{subcaption}

\theoremstyle{plain}
\newtheorem{thm}{Theorem}[section]
\newtheorem{lemma}[thm]{Lemma}
\newtheorem{prop}[thm]{Proposition}
\newtheorem{claim}[thm]{Claim}

\newtheorem*{thmsymp*}{Theorem \ref{thm:gen_cover}'}
\newtheorem*{theorem*}{Theorem}

\theoremstyle{plain}
\newtheorem{thmintro}{Theorem}
\newtheorem{propintro}[thmintro]{Proposition}
\newtheorem{question}[thmintro]{Question}

\theoremstyle{definition}
\newtheorem{defin}[thm]{Definition}
\newtheorem{rem}[thm]{Remark}
\newtheorem{setup}[thm]{Setup}
\newtheorem{example}[thm]{Example}
\newtheorem{notation}[thm]{Notation}
\newtheorem{conj}[thm]{Conjecture}

\newcommand{\R}{{\mathbb{R}}}

\newcommand{\Q}{{\mathbb{Q}}}

\newcommand{\Z}{{\mathbb{Z}}}

\newcommand{\D}{{\mathbb{D}}}

\renewcommand{\d}{\Delta} 
\newcommand{\bd}{\mathbf{b}}

\newcommand{\cA}{{\mathcal{A}}}

\newcommand{\cJ}{{\mathcal{J}}}

\newcommand{\cL}{{\mathcal{L}}}
\newcommand{\cM}{{\mathcal{M}}}
\newcommand{\cN}{{\mathcal{N}}}
\newcommand{\cP}{{\mathcal{P}}}

\newcommand{\cU}{{\mathcal{U}}}

\def\id{{1\hskip-2.5pt{\rm l}}}

\newcommand{\indCZ}{{\operatorname{CZ}\,}}

\newcommand{\spec}{{\it spec}}

\newcommand{\val}{{\it val}}
\newcommand{\gw}{{w}}

\newcommand{\im}{\operatorname{im}}
\newcommand{\ind}{\operatorname{ind}}



\renewcommand{\hat}{\widehat}
\definecolor{lev}{rgb}{0.773,0.294,0.549}

\title{A local-to-global inequality for spectral invariants and an energy dichotomy for Floer trajectories}

\author{Lev Buhovsky and Shira Tanny}


\begin{document}
\maketitle

\begin{abstract}
We study a local-to-global inequality for spectral invariants  of Hamiltonians whose supports have a ``large enough" tubular neighborhood on semipositive symplectic manifolds. In particular, we present the first examples of such an inequality when the Hamiltonians are not necessarily supported in domains with contact type boundaries, or when the ambient manifold is irrational. This extends a series of previous works studying locality phenomena of spectral invariants \cite{polterovich2014symplectic,seyfaddini2014spectral,ishikawa2015spectral,humiliere2016towards,ganor2020floer,tanny2021max}. 
A main new tool is a lower bound, in the spirit of Sikorav, for the energy of Floer trajectories that cross the tubular neighborhood against the direction of the negative-gradient vector field.  
\end{abstract}

\section{Introduction and results.}\label{sec:results}
Hamiltonian spectral invariants were introduced by Oh and Schwartz \cite{oh2005construction,schwarz2000action} and are a central tool in studying dynamical properties of Hamiltonian flows. On a closed symplectic manifold $(M,\omega)$, these invariants assign to each Hamiltonian $F:M\times S^1\rightarrow \R$ and a non-zero quantum homology class $a\in QH_*(M)$ a real number, $c(F;a)\in \R$. The spectral invariants are defined using Floer homology, which is a global invariant of the manifold. However, it is known that in some cases these invariants admit a local behavior. The first evidence for a local behavior of spectral invariants was due to Humili\`ere, Le Roux and Seyfaddini \cite{humiliere2016towards}. {They considered Hamiltonians supported in certain disconnected open subsets on aspherical symplectic manifolds\footnote{Recall that a symplectic manifold $ (M,\omega) $ is called aspherical if $ \omega|_{\pi_2(M)}=c_1|_{\pi_2(M)} =0$.}. In this setting they showed that the spectral invariant with respect to the fundamental class $[M]\in QH_*(M)$ is determined by the invariants of the restrictions to the connected components.} 
More explicitly, if $F$ is a Hamiltonian supported on a disconnected subset $V:=\bigsqcup V_i$ satisfying certain conditions\footnote{ Humili\`ere, Le Roux and Seyfaddini assume that the connected components $V_i$ are incompressible Liouville domains.}, and $F_i$ is the restriction of $F$ to $V_i$,  Humili\`ere, Le Roux and Seyfaddini showed that
$$
c(F;[M]) = \max_i c(F_i;[M]).
$$
This formula does not hold for a general homology class.  In 
\cite{ganor2020floer}, Ganor and Tanny proved that an inequality holds for all classes, again on aspherical manifolds  and the same assumptions on the supports. Moreover, they showed that in this setting {spectral invariants are determined by the restriction of the Hamiltonian to a neighbourhood of its support and are independent of the ambient manifold}. Such strong local behavior is known to fail on general {non-aspherical symplectic manifolds}. However, there are no known counterexamples on any closed symplectic manifold to the following inequality, which we refer to as the \emph{max-inequality}:
\begin{equation}
    \label{eq:max_ineq}
c(F;a_1*\cdots *a_L) \leq \max_i c(F_i;a_i),
\end{equation}
where $F=F_1+\cdots+F_L$, $F_i$ are disjointly supported,  $a_i\in H_*(M)\subset QH_*(M)$ and $*$ is the quantum product. There is also an extension of this inequality for general quantum homology classes, which is stated in Theorem~\ref{thm:max-ineq} below. The max-inequality can be interpreted as a local-to-global inequality, which is a weaker locality phenomenon than the aforementioned results {on aspherical symplectic manifolds}.\\

In \cite{tanny2021max} the max-inequality (\ref{eq:max_ineq}) is proved for $a_i=[M]\in QH_*(M)$, in various settings. That paper extended a line of previous works on locality of spectral invariants, including \cite{polterovich2014symplectic,seyfaddini2014spectral,ishikawa2015spectral,humiliere2016towards,ganor2020floer}.
The methods of all of the aforementioned works allowed studying locality of spectral invariants only for restricted types of supports and closed symplectic manifolds.
In particular,  
the Hamiltonians are assumed to be supported in domains with \emph{contact-type} boundaries and the ambient manifold is assumed to be \emph{rational}, meaning that $\omega(\pi_2(M))$ is a discrete subgroup of $\R$. These technical assumptions seem to be a side-effect of the tools used or developed in the above works. It is currently unknown  whether the max-inequality holds without any assumptions on the manifold or the support:
\begin{question}
Does there exist a closed symplectic manifold $(M,\omega)$ and disjointly supported Hamiltonians $F_1$ and $F_2$ such that 
$$
c(F_1+F_2;a*b)>\max\big\{c(F_1;a),c(F_2;b) \big\}
$$
for some $a,b\in H_*(M)\subset QH_*(M)$?
\end{question}

In the current paper we prove that the max-inequality holds for all quantum homology classes,
under the assumption that the connected components of the support are ``far enough" from each other. More explicitly, we present a measurement of (disjoint) collar neighborhoods around the connected components of the support, that is inspired by Sikorav's energy bounds for holomorphic curves \cite{sikorav1994some}. We show that whenever these sizes are larger than the spectral invariants of the restrictions $F_i$, then the max-inequality (\ref{eq:max_ineq}) holds. We work on semipositive manifolds (see definition in (\ref{eq:semipositive})) due to the simpler foundations of Floer homology in this case. However, we expect our methods to be applicable on any closed symplectic manifold, using virtual techniques (e.g. \cite{pardon2016algebraic}).  

\subsection{A Sikorav-type energy bound for Floer trajectories.}
To state the results more formally, let  $(M^{2n},\omega)$ be a closed symplectic manifold and let $V$ be an open subset of $M$ with smooth boundary. Fix a tubular neighborhood $N$ of $\partial V$ in $M\setminus V$, namely, $N\subset M\setminus V$ is diffeomorphic to  $[0,1]\times  \partial V\tilde{\rightarrow}N$, with $\{0\}\times\partial V$ mapped to $\partial V\subset N$. Denote by $\hat V:=\overline{V}\cup N$, of which we think as an extension of $V$, see Figure~\ref{fig:setting}.\\
\begin{figure}
    \centering
    \includegraphics{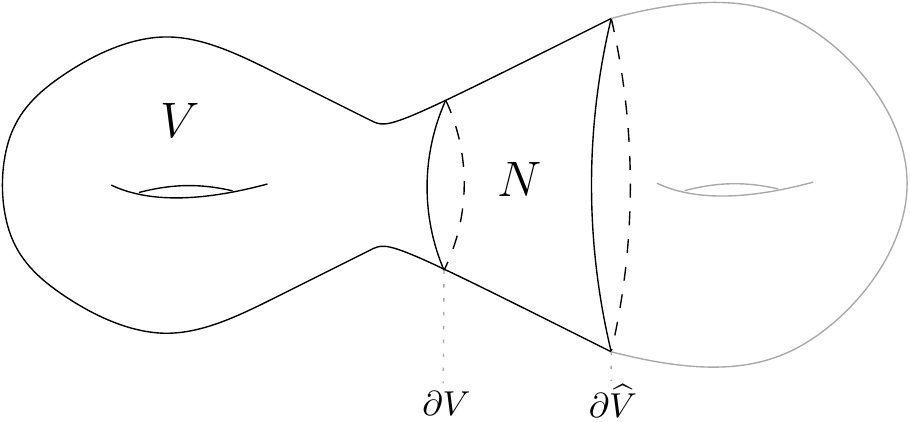}
    \caption{An illustration of the geometric setting. Here $N$ is a smooth tubular neighborhood of the boundary of $V$.}
    \label{fig:setting}
\end{figure}

In \cite{sikorav1994some}, Sikorav showed that the energy of a pseudoholomorphic curve that crosses such a neighborhood $N$ is bounded below  by a constant depending on $N$. In the current paper, we concern Hamiltonian Floer homology and therefore work with Floer trajectories rather than holomorphic curves (see Section~\ref{sec:preliminaries} for preliminaries on Floer homology). Lower bounds for the energy of Floer trajectories that cross certain regions were studied in various works, e.g., \cite{usher2009floer,hein2012conley,groman2021locality} and have been used frequently. These energy lower bounds depend on the Hamiltonian in question and tend to zero when the Hamiltonian does. This phenomenon reflects the fact that Morse flow lines  are a special case of Floer trajectories, and they can travel large distances with small energy, when the gradient of the Hamiltonian is small. On the other hand, when the Hamiltonian is zero, Floer trajectories are pseudoholomorphic curves. Therefore we see a dichotomy between the energies of crossing pseudoholomorphic curves and Morse flow lines, as two particular cases of Floer trajectories. A natural question, which is also motivated by the study of locality in Floer homology, is whether there is a lower bound, that does not tend to zero with the Hamiltonian, for the energies of Floer trajectories  that cross ``against" the negative-gradient direction, as illustrated in Figure~\ref{fig:crossing_traj}. The following theorem gives an affirmative answer to this question.

    \begin{thmintro}
    \label{thm:main_energy_bound}
        Let $h:{N}\rightarrow\R$ be a smooth function without critical points such that $h|_{\partial V}=0$ and $h|_{\partial \hat V}=1$. Then, there exists $\varepsilon_0$ and a constant $C(N, g_J,h)>0$ such that for any $\varepsilon\in (0,\varepsilon_0)$ and for any homotopy of Hamiltonians $H:M\times S^1\times \R\rightarrow \R$ satisfying $H|_N(x,t,s)=\varepsilon \cdot h(x) {+ \beta(s,t)}$ {for some $\beta:\R\times S^1\rightarrow\R$}, the following holds. Any Floer trajectory $u$  with respect to $(H,J)$ that intersects both $V$ and $M\setminus \hat V$ satisfies 
        \begin{itemize}
            \item {either $x_-\subset M\setminus \hat V$ and $x_+\subset V$}, where $x_\pm:=\lim_{s\rightarrow \pm\infty}u(s,-)$,
            \item or, $E(u)\geq C(N, g_J,h)$.
        \end{itemize}
    \end{thmintro}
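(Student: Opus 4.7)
The plan is to adapt Sikorav's isoperimetric bound for pseudoholomorphic curves to the Floer setting via a symplectic gauge transformation. On the set $\Sigma := u^{-1}(N)\subset \R\times S^1$ I would define
\[
\tilde u(s,t) \;:=\; \phi_h^{-\varepsilon t}\bigl(u(s,t)\bigr),
\]
where $\phi_h^r$ denotes the time-$r$ flow of $X_h$; since $\phi_h^r$ preserves the level sets of $h$, $\tilde u$ again lies in $N$. On $\Sigma$ the Floer equation reads $\partial_s u = J(\partial_t u - \varepsilon X_h)$, and a direct computation using $d\phi_h^r(X_h)=X_h$ and $\phi_h^*\omega=\omega$ yields the Cauchy--Riemann-type identity
\[
\partial_s \tilde u \;=\; \tilde J\,\partial_t \tilde u,\qquad \tilde J_t := d\phi_h^{-\varepsilon t}\,J\,d\phi_h^{\varepsilon t},
\]
so that $\tilde u$ is pseudoholomorphic with respect to the compatible pair $(-\tilde J,-\omega)$. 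Crucially, $\tilde J$ lies in a prescribed $C^0$-neighbourhood of $J$ once $\varepsilon$ is small enough.

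A pointwise calculation using $\omega(\,\cdot\,,X_h)=-dh$, $h\circ\phi_h^r=h$ and $\phi_h^*\omega=\omega$ then yields the energy--area identity
\[
\tilde u^*\omega \;=\; -|\partial_s u|^2\,ds\wedge dt, \qquad \text{hence}\qquad E_\Sigma(u) := \int_\Sigma|\partial_s u|^2\,ds\,dt \;=\; -\int_\Sigma\tilde u^*\omega.
\]
Since $E(u)\ge E_\Sigma(u)$, it suffices to bound $-\int_\Sigma\tilde u^*\omega$ below by a positive constant, uniformly in $\varepsilon$. The dichotomy in the statement enters here through the signed $h$-crossing: for each $t\in S^1$ the arc $s\mapsto u(s,t)$ contributes a total signed change of $f:=h\circ u$ along its traversals of $N$, and integrating in $t$ this equals $+1$ when $x_-\subset V$ and $x_+\subset M\setminus\hat V$, equals $-1$ in the excluded case $x_-\subset M\setminus\hat V$, $x_+\subset V$, and vanishes when both endpoints lie on the same side. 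In the last situation, the hypothesis that $u$ meets both $V$ and $M\setminus\hat V$ forces $u$ to traverse $N$ at least twice, so at least one traversal goes against the negative-gradient direction of $\varepsilon h$.

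The heart of the argument is then a Sikorav-type monotonicity estimate for the $(-\tilde J)$-holomorphic curve $\tilde u$, applied to the against-gradient traversal identified above. In the excluded direction, a Morse flow line of $-\nabla(\varepsilon h)$ realises $\tilde u$ as a thin gauge-shifted ribbon whose $(-\omega)$-area is only of order $\varepsilon$, which is exactly why no uniform bound is possible in that case; in each of the three non-excluded cases, however, the against-gradient traversal forces the corresponding piece of the image of $\tilde u$ to cover a transverse slice of $N$ of definite size, and applying the local pseudoholomorphic monotonicity inequality to the $(-\tilde J,-\omega)$-holomorphic map $\tilde u$ then gives
\[
-\int_\Sigma\tilde u^*\omega \;\geq\; C(N,g_J,h)>0,
\]
uniformly for $\varepsilon\in(0,\varepsilon_0)$, with $\varepsilon_0$ chosen so that $\tilde J$ stays in a fixed $C^0$-neighbourhood of $J$ and the monotonicity constant does not degrade. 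The hard part is precisely this last step: upgrading the qualitative observation that against-gradient crossings cannot degenerate into thin ribbons into a quantitative, $\varepsilon$-uniform Sikorav lower bound, while controlling how the monotonicity inequality depends on the perturbed complex structure~$\tilde J$.
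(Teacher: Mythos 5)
Your gauge-transformation set-up is fine as far as it goes: on $\Sigma=u^{-1}(N)$ the map $\tilde u(s,t)=\phi_h^{-\varepsilon t}(u(s,t))$ is indeed pseudoholomorphic for the conjugated ($t$-dependent) structure, and the energy--area identity holds up to a sign error (with the paper's conventions one gets $\tilde u^*\omega=+|\partial_s u|^2\,ds\wedge dt$, so $E_\Sigma(u)=\int_\Sigma\tilde u^*\omega$; also note $\tilde u$ is not $t$-periodic, so it lives on a cut strip rather than the cylinder). The genuine gap is the step you yourself flag as ``the hard part'': you claim that in the non-excluded cases the against-gradient traversal ``forces the image of $\tilde u$ to cover a transverse slice of $N$ of definite size'' and that local monotonicity then gives an $\varepsilon$-uniform area bound. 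Covering a transverse slice is not a usable criterion: the down-hill ribbon $\phi_h^{-\varepsilon t}(\gamma(s))$ also sweeps a full transverse slice of $N$, yet has area of order $\varepsilon$. What actually fails for the ribbon is boundary control -- the cut edges $t=0,1$ of the strip (and more generally $\partial\Sigma$) run through the middle of $N$, so no ball centered on the curve has the properness needed for a Sikorav/monotonicity estimate -- and nothing in your sketch rules out the same edge-hugging configuration for an up-hill crossing. So the quantitative, $\varepsilon$-uniform input that distinguishes up-hill from down-hill is missing, and that input is the entire content of the theorem.

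For comparison, the paper does not apply monotonicity to a gauge-transformed curve at all. The directionality enters through the averaged height $\varphi(s)=\int_0^1 h(u(s,t))\,dt$: on the set of $s$ whose loops lie in $N$, the autonomous term contributes $dh(-\nabla_J(\varepsilon h))\le 0$, so any increase of $\varphi$ must come from $-\int dh\circ J(\partial_t u)\,dt$, which is controlled by the isoperimetric inequality (Lemma~\ref{lem:isoperimetric_ineq}) by $\int|\partial_t u|^2$, and that in turn is comparable to $\int|\partial_s u|^2$ by Lemma~\ref{lem:dist_from_periodic} (loops cannot be $L^2$-close to $\varepsilon X_h$-orbits); on the set of $s$ whose loops cross $\partial N$, Lemma~\ref{lem:crossing_loops_bound} plus Cauchy--Schwarz gives the bound. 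Where Sikorav is used (Lemma~\ref{lem:Hein_sikorav}, for the same-side-asymptotics case), it is via the graph trick in $N\times\R\times S^1$ and yields only $E(u)+m(u^{-1}(N))\ge C$ -- the domain-measure term being exactly the allowance for thin ribbons -- which must then be combined with the crossing estimate and a case analysis on the measure of crossing times. To repair your approach you would need to replace the ``covers a slice'' assertion by an argument of this quantitative kind; as written, the proposal defers precisely that step.
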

    
    \begin{figure}
	\centering
	\begin{subfigure}{.47\textwidth}
		\centering
		\includegraphics[width=0.98\linewidth]{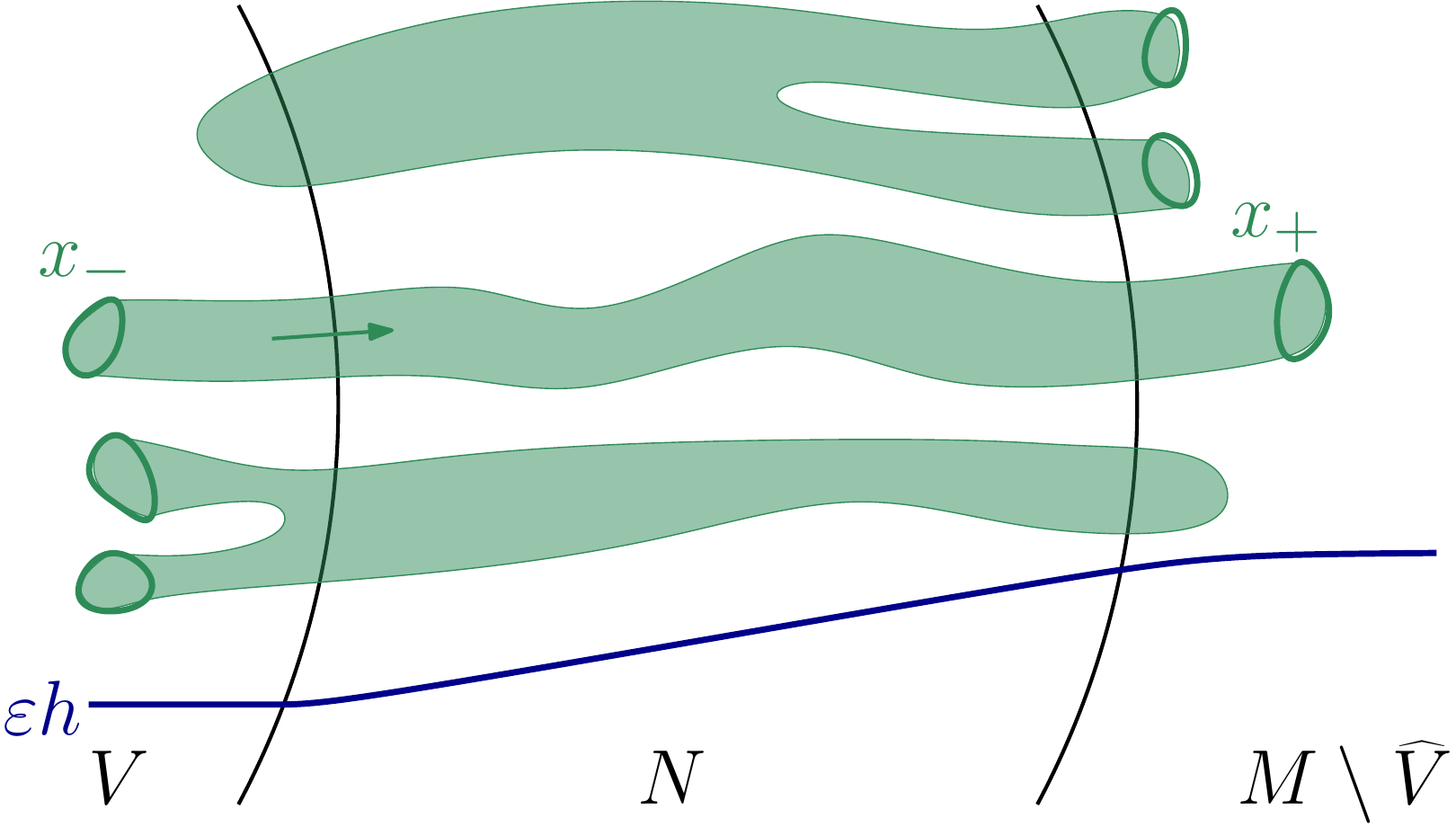}
		\caption{Floer trajectories whose energies are bounded below, analogously to pseudoholomorphic curves.}
		\label{fig:big_enegy}
	\end{subfigure}%
        \hspace{0.5cm}
	\begin{subfigure}{.47\textwidth}
		\centering
		\includegraphics[width=0.98\linewidth]{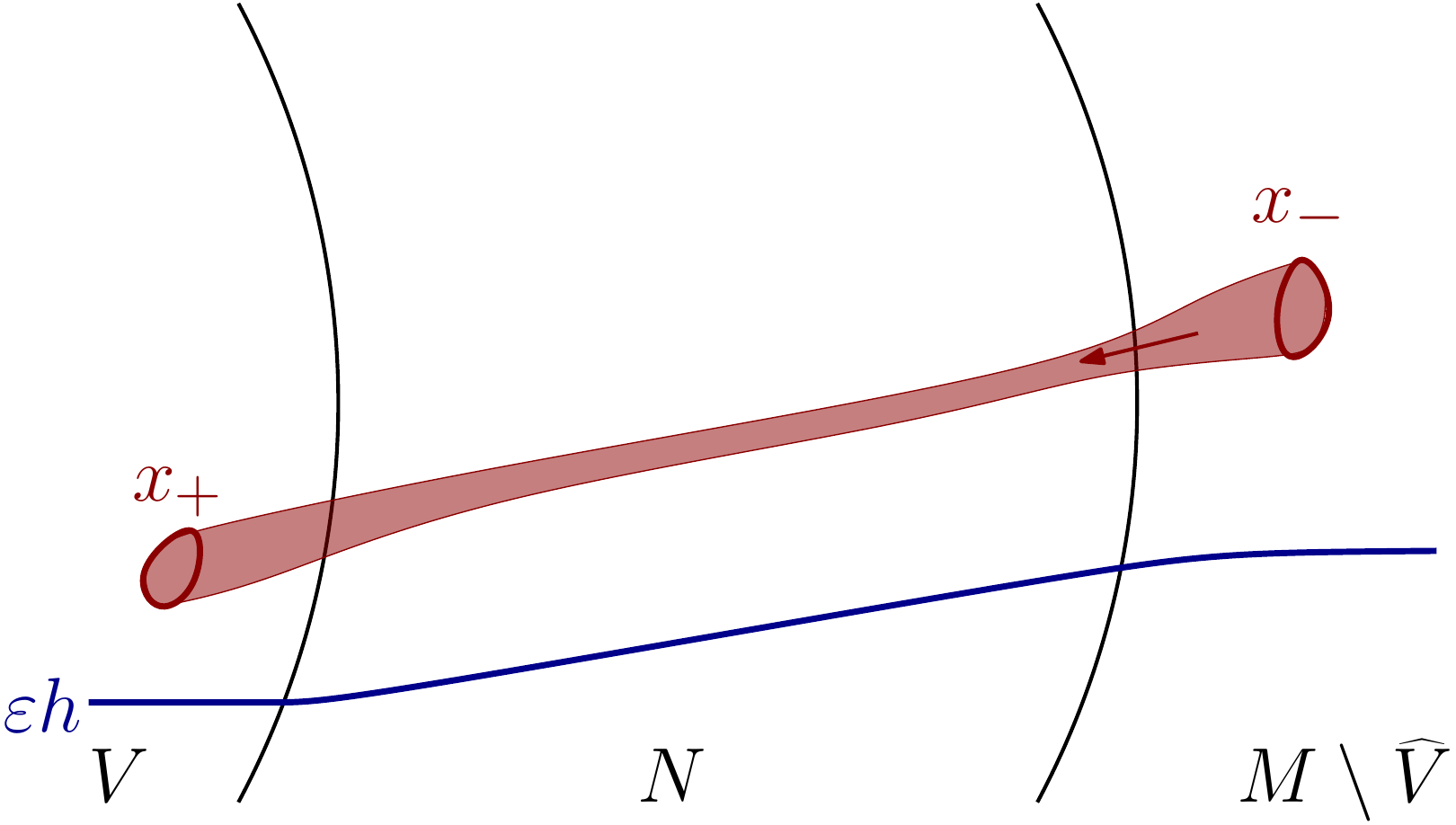}
		\caption{Floer trajectories that go ``down-hill" and might have small energies, like Morse flow-lines.}
		\label{fig:small_energy}
	\end{subfigure}
	\caption{ An illustration of crossing trajectories with and without energy lower bound, as stated in Theorem~\ref{thm:main_energy_bound}.}
	\label{fig:crossing_traj}
\end{figure}
   
    \begin{rem}\label{rem:main_energy_bound}
    \begin{enumerate}[label=(\alph*)]
    \item Theorem~\ref{thm:main_energy_bound} is formulated for homotopies of Hamiltonians, but applies to Hamiltonians as well. Indeed, given a  Hamiltonian $F:M\times S^1\rightarrow\R$, we can identify it with a constant homotopy, namely $H(x,t,s)=F(x,t)$.
    \item In Theorem~\ref{thm:main_energy_bound}, the lower bound depends only on the ``modeling" function $h$ and does not shrink when $H$ tends to a constant on $N$ (that is, when $\varepsilon\rightarrow 0$). 
    \item \label{itm:energy_bnd_perturbation} We actually prove Theorem~\ref{thm:main_energy_bound} for small perturbations of such homotopies as well. Namely, we show that the assertion of Theorem~\ref{thm:main_energy_bound} holds for any homotopy $H$ such that $H|_N(x,t,s)=\varepsilon h(x)+ h'(x,t,s)+\beta(s,t)$ where $h':M\times S^1\times \R\rightarrow \R$ is a $C^\infty$ small homotopy and the support of $\partial_sh'$ is uniformly bounded. Note that we assume that $h'$ is much smaller than $\varepsilon h$.  
    \end{enumerate}
    \end{rem}
    The proof of Theorem~\ref{thm:main_energy_bound} is given in Section~\ref{sec:energy_bound}. It uses the arguments of Sikorav \cite{sikorav1994some} as well as  Hein \cite{hein2012conley}, but requires new arguments as well. 
    Motivated by the Theorem~\ref{thm:main_energy_bound} we define a measurement of the tubular neighborhood $N$, to be the maximal energy lower bound for Floer trajectories that crosses $N$ ``against" the negative-gradient direction. 
    \begin{defin}\label{def:Floer_width} 
    The \emph{Floer width}  of $N$ is:
    \begin{equation}
        w(N):=\sup\left\{C(N, g_J, h)\ |\ J\in\cJ_{\operatorname{reg}},\     h:{N}\rightarrow\R, \ Crit(h)=\emptyset, h|_{\partial V}=0, h|_{\partial \hat V}=1\right\}. 
    \end{equation}
    Here $\cJ_{\operatorname{reg}}$ is a residual subset of the space of all $\omega$-compatible almost complex structures, see Section ~\ref{sec:preliminaries}. 
    \end{defin}
    \begin{example} \label{exa:width_between_balls} Suppose that there exists a symplectic embedding $\psi$ of a ball $B^{2n}(R_+)$ of radius $R_+$ into $M$. Fix $R_-<R_+$ and let $N:=\psi\big(B^{2n}(R_+)\setminus B^{2n}(R_-)\big)$. Then, $w(N) \geq C\cdot (R_+-R_-)^2$, where $C>0$ is a constant depending only on $\dim(M)$.
    \end{example}
    \begin{rem}\label{rem:continuity_of_w}
        {The measurement $w$ is weakly continuous in the following sense. For every $\delta$, there exists a closed tubular sub-neighborhood $N'\subset N\setminus \partial N$ such that $w(N')\geq w(N)-\delta$. The neighborhood $N'$ is obtained as $h^{-1}(c(\delta), 1-c(\delta))$ for some $c(\delta)<1$ and $h$ for which the supremum in Definition~\ref{def:Floer_width} is almost attained.}
    \end{rem}

    \subsection{The max-inequality.}
    Our next main result states that the max-inequality (\ref{eq:max_ineq}) holds for non-negative Hamiltonians supported in domains having disjoint tubular neighborhoods $N_i$, such that the widths $w(N_i)$ (as in Definition~\ref{def:Floer_width})  are large enough, see Figure~\ref{fig:disj_supp_Hams}.
    In this section we assume that the closed symplectic manifold  $(M,\omega)$ is semipositive, namely, for every $A\in \pi_2(M)$,
    \begin{equation}\label{eq:semipositive}
        3-n\leq c_1(A)<0\qquad \Rightarrow \qquad\omega(A)\leq 0.
    \end{equation}
    Semipositive manifolds include Calabi-Yau manifolds, as well as all closed symplectic manifolds of dimension up to 6 (see, e.g., \cite[Section 6.4]{mcduff2012j}).
    See Remark~\ref{rem:foundations} below for a discussion regarding general closed symplectic manifolds.
    \begin{thmintro}\label{thm:max-ineq}
    Let $(M,\omega)$ be a semipositive symplectic manifold and let $F_i:M\times S^1\rightarrow \R$ be non-negative Hamiltonians supported in domains $V_i\subset M$. Suppose that:
    \begin{itemize}
        \item There exist tubular neighborhoods $N_i$ around $V_i$, such that $\hat V_i:=\overline{V_i}\cup N_i$ are pairwise disjoint,
        \item $a_i\in QH_*(M)$ are such that $c(F_i;a_i)-\val(a_i)< w(N_i)$.
    \end{itemize}
    Then,
    \begin{equation}\label{eq:max_ineq_gen_valuation}
        c\Big(\sum_i F_i;*_i a_i\Big)\leq \left(\sum_i \val(a_i)\right) + \max_i \{c(F_i;a_i)-\val(a_i)\}.
    \end{equation}
    In particular, if $a_i\in H_*(M)$, $\val(a_i)=0$ and   $c\Big(\sum_i F_i;*_i a_i\Big)\leq  \max_i c(F_i;a_i)$.
    \end{thmintro}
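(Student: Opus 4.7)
\emph{Setup.} Fix $\varepsilon>0$. By Definition~\ref{def:Floer_width} and Remark~\ref{rem:continuity_of_w}, for each $i$ pick a function $h_i:N_i\to\R$ without critical points with $h_i|_{\partial V_i}=0$ and $h_i|_{\partial\hat V_i}=1$, together with $J\in\cJ_{\rm reg}$, such that $C(N_i,g_J,h_i)>c(F_i;a_i)-\val(a_i)$. Extend to $h:M\to\R$ equal to $0$ on $\bigcup_i\overline{V_i}$ and to $1$ on $M\setminus\hat V$, where $\hat V:=\bigcup_i\hat V_i$, and set $G:=\sum_iF_i+\varepsilon h$. Since $\|G-\sum_iF_i\|_\infty\le\varepsilon$, by continuity of spectral invariants it suffices to prove (\ref{eq:max_ineq_gen_valuation}) for $G$ up to an error $o(1)$ as $\varepsilon\to 0$. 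A small generic time-dependent Morse-type perturbation inside each $V_i$ and on $M\setminus\hat V$ renders $G$ non-degenerate, with $1$-periodic orbits lying either in $\bigcup_iV_i$ (approximating orbits of $F_i$) or in $M\setminus\hat V$ (of action $O(\varepsilon)$); the necks $N_i$ contain none.

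\emph{Local PSS cycles.} For each $i$ we produce a Floer cycle $\alpha_i\in CF_*(G)$ with $[\alpha_i]=PSS(a_i)$, $\val(\alpha_i)\le c(F_i;a_i)+o(1)$, and whose generators all lie in $\hat V_i$. Starting from an optimal PSS cycle for a small non-degenerate perturbation of $F_i$, apply the continuation map associated with a homotopy $F_i\leadsto G$ that agrees with $F_i$ on $\overline V_i$. Any continuation trajectory whose output generator lies outside $\hat V_i$ must cross $N_i$ against the negative-gradient direction; by the extension of Theorem~\ref{thm:main_energy_bound} to small perturbations of homotopies (Remark~\ref{rem:main_energy_bound}), such a trajectory has energy at least $C(N_i,g_J,h_i)>c(F_i;a_i)-\val(a_i)$. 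This exceeds the action drop available under the bound $c(F_i;a_i)+o(1)$ on the initial cycle combined with the a priori lower bound $c(G;a_i)\ge\val(a_i)$, so no such trajectory contributes and $\alpha_i$ has the stated properties.

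\emph{Pair-of-pants product and Novikov valuation.} Realize $PSS(*_ia_i)$ via a chain-level pair-of-pants product $\mu:CF_*(G)^{\otimes L}\to CF_*(G)$, constructed from an $(L+1)$-pointed sphere $\Sigma$ whose Hamiltonian $1$-form restricts to $\varepsilon h_i+\beta_i(s,t)$ on each neck $N_i\times\Sigma$, so that the hypothesis of Theorem~\ref{thm:main_energy_bound} is satisfied on every $N_i$. The cycle $\mu(\alpha_1,\ldots,\alpha_L)\in CF_*(G)$ represents $PSS(*_ia_i)$. Decompose it as $\sum_j\beta_j$ by grouping generators according to the component $\hat V_j$ containing the output orbit. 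For each $j$, a contributing solution has its ``principal'' input $x_j$ contributing at most $c(F_j;a_j)$ to the output action, while every non-principal input $x_i\in\hat V_i$ ($i\ne j$) forces the solution to cross $N_i$ against the negative-gradient direction; Theorem~\ref{thm:main_energy_bound} applied at $N_i$ then bounds the contribution of $x_i$ to the output action by $\val(a_i)$ rather than by $c(F_i;a_i)$. Hence
\[
\val(\beta_j)\le c(F_j;a_j)+\sum_{i\ne j}\val(a_i)+o(1)=\sum_i\val(a_i)+\bigl(c(F_j;a_j)-\val(a_j)\bigr)+o(1).
\]
Since $\val(\sum_j\beta_j)=\max_j\val(\beta_j)$ in the non-Archimedean Novikov framework, taking the maximum over $j$ and letting $\varepsilon\to 0$ yields (\ref{eq:max_ineq_gen_valuation}).

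The main obstacle is the final step: simultaneously applying Theorem~\ref{thm:main_energy_bound} to the $L-1$ non-principal necks within a single connected pair-of-pants solution, and distributing the total geometric energy among them so that each non-principal input $x_i$ contributes only $\val(a_i)$, not $c(F_i;a_i)$, to the output action. This requires arranging the Hamiltonian $1$-form on $\Sigma$ so that the hypothesis of Theorem~\ref{thm:main_energy_bound} holds on every neck simultaneously, and the semipositivity of $(M,\omega)$ is needed to prevent sphere bubbling from invalidating the resulting energy estimates.
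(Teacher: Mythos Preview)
Your route is genuinely different from the paper's, and it has two real gaps, one of which you already flag.

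The paper never touches pair-of-pants products or multi-input Floer solutions. It reduces Theorem~\ref{thm:max-ineq} to the stability and subadditivity axioms of spectral invariants, once one has, for each $i$ separately, a \emph{spectral killer}: a function $K_i$ supported in $\hat V_i$ with $\|K_i\|_{C^0}=c(F_i;a_i)-\val(a_i)$ and $c(F_i+K_i;a_i)=\val(a_i)$ (Proposition~\ref{prop:slow_killer}). With these in hand, Claim~\ref{clm:max_ineq} gives
\[
c\Big(\sum_i F_i;*_i a_i\Big)\le c\Big(\sum_i(F_i+K_i);*_i a_i\Big)+\max_j\|K_j\|_{C^0}\le \sum_i c(F_i+K_i;a_i)+\max_j\|K_j\|_{C^0},
\]
which is exactly (\ref{eq:max_ineq_gen_valuation}). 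All the hard analysis (Theorem~\ref{thm:main_energy_bound}, Proposition~\ref{prop:low_actions_outside}) is spent on a single domain at a time, on cylinders, to build each $K_i$; the interaction between the $V_i$ is handled purely by the subadditivity of $c$.

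Your ``Local PSS cycles'' step already fails as written. The homotopy $F_i\leadsto G$ is \emph{increasing} outside $\hat V_i$ (since $G\ge F_i$ there), so the term $\int\partial_sH\circ u$ in the energy identity (\ref{eq:energy_id_homotopies}) can be as large as $\max_{j}\max F_j$. A continuation trajectory crossing $N_i$ with energy $\ge C(N_i,g_J,h_i)$ therefore does not force the output action below $\val(a_i)$; it only gives $\cA_G(x_+,D_+)\le \val(a_i)+\int\partial_sH\circ u$, which is uncontrolled. The sentence invoking ``the a priori lower bound $c(G;a_i)\ge\val(a_i)$'' conflates the spectral invariant with the action of individual generators: nothing prevents generators outside $\hat V_i$ from appearing in $\Phi(\alpha_i')$ at high action. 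So neither conclusion about $\alpha_i$ (the action bound $c(F_i;a_i)+o(1)$, or the support in $\hat V_i$) is justified.

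The second gap is the one you name. Theorem~\ref{thm:main_energy_bound} is proved for cylinders $\R\times S^1$; its argument hinges on the slicing $s\mapsto u(s,\cdot)$ (see the function $\varphi$ in (\ref{eq:def_phi}) and Lemmas~\ref{lem:crossing_loops_bound}--\ref{lem:Hein_sikorav}). Extending it to an $(L{+}1)$-punctured sphere, and moreover making the energy bounds at the $L-1$ non-principal necks \emph{additive} for a single connected solution, is a new statement neither proved nor needed here. The spectral-killer approach sidesteps this entirely: it multiplies spectral numbers, never chains.
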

    \begin{figure}
        \centering
        \includegraphics[width=0.8\textwidth]{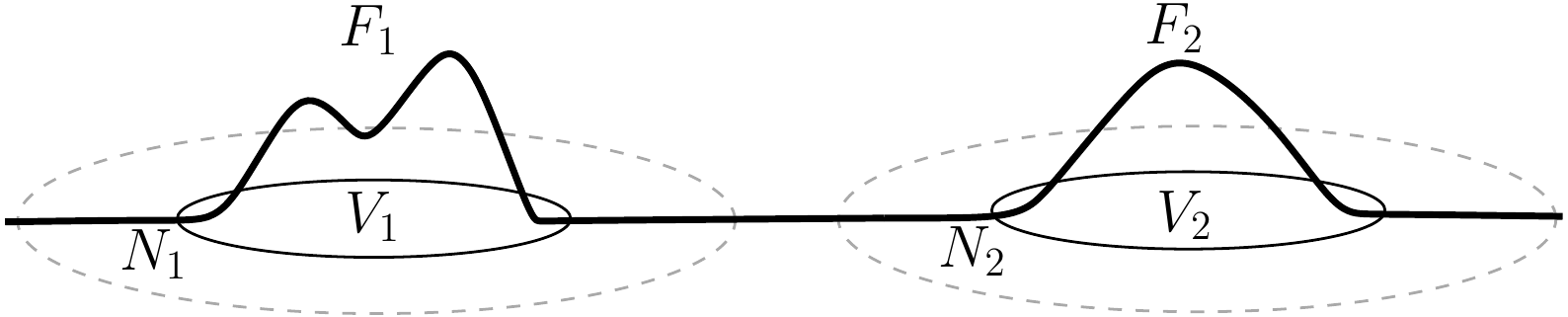}
        \caption{An illustration of Hamiltonians for which we prove the max-inequality.}
        \label{fig:disj_supp_Hams}
    \end{figure}
    \noindent Here $\val:QH_*(M)\rightarrow\R$ is the valuation map whose definition is recalled in Section~\ref{sec:preliminaries}. We remark that inequality (\ref{eq:max_ineq}) does not hold for general quantum homology class without correcting with respect to the valuations, as done in (\ref{eq:max_ineq_gen_valuation}). This can be seen by taking $F=0$ and $a_i = [M]q^A$, since $c(0;a) = \val(a)$.   \\

\begin{example}\label{exa:max_ineq_dim_6}
    Let $(M^{2n},\omega)$ be a closed symplectic manifold of dimension at most $6$, namely, $n\leq 3$. For example, consider  $(M,\omega) = (S^2\times S^2, \omega_a\oplus \omega_b)$ where $\omega_a$ (respectively $\omega_b$) is the area form on $S^2$ of total area $a$ (respectively $b$). Notice that when $a/b\notin \Q$, the symplectic manifold $(M,\omega)$ is irrational.

    There exists a universal constant $\kappa>0$ such that the following holds. Let $\{\psi_i\}_{i=1}^L$ be finite collection of symplectic embedding of balls $B^{2n}(R_i)$ into $M$ with disjoint images $\hat V_i:=\psi_i(B^{2n}(R_i))$. For  every collection of non-negative functions $F_i$ on $M$ that are supported in $V_i:=\psi_i(B^{2n}(\kappa  R_i))$, we have
    \[
    c(F_1+\cdots +F_L; [M])\leq \max_i c(F_i;[M]).
    \]
    This follows from Theorem~\ref{thm:max-ineq}, Example~\ref{exa:width_between_balls} and the energy-capacity inequality (see Section~\ref{sec:preliminaries}).  
\end{example}
    
    As in \cite{seyfaddini2014spectral,tanny2021max}, the proof of inequality (\ref{eq:max_ineq_gen_valuation}) goes through a construction of a Hamiltonian called \emph{spectral killer} which, when added to a Hamiltonian, brings its spectral invariant to the invariant of the function 0. 
    \begin{propintro}
    \label{prop:slow_killer}
    Let $F$ be a non-negative Hamiltonian supported in $V$ and let $a\in QH_*(M)$. If $0<c(F;a)-\val(a)<w(N)$, then there exists a function $K:M\rightarrow\R$, supported on $\hat V$,
    such that \[
    \|K\|_{C^0}=c(F;a)-\val(a) \qquad \text{ and }\qquad c(F+K;a)=\val(a).
    \]
    \end{propintro}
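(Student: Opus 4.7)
Set $c:=c(F;a)-\val(a)$, so by hypothesis $c\in(0,w(N))$. Using Definition~\ref{def:Floer_width} and Remark~\ref{rem:continuity_of_w}, I choose a regular $J\in\cJ_{\operatorname{reg}}$ and a smooth modeling function $h\colon N\to\R$ without critical points, with $h|_{\partial V}=0$ and $h|_{\partial\hat V}=1$, such that $C(N,g_J,h)>c$ and such that $c$ is an admissible value of the parameter $\varepsilon$ in Theorem~\ref{thm:main_energy_bound}. I then define the candidate killer
\[
K(x):=\begin{cases}-c & x\in\overline V,\\ -c\bigl(1-h(x)\bigr) & x\in N,\\ 0 & x\in M\setminus\hat V,\end{cases}
\]
smoothed across $\partial V$ and $\partial\hat V$ so that $K\in C^\infty(M)$. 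By construction $\operatorname{supp}K\subset\hat V$, $K\le 0$, and $\|K\|_{C^0}=c$.

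The inequality $c(F+K;a)\ge\val(a)$ follows from monotonicity. Since $\operatorname{supp}F\subset V$, the function $F-c$ is well-defined globally on $M$ (equal to $-c$ outside $V$), and one checks $F+K\ge F-c$ pointwise: equality on $V$; $-c(1-h)\ge -c$ on $N$; $0\ge -c$ on $M\setminus\hat V$. Monotonicity combined with the constant-shift property of spectral invariants gives
\[
c(F+K;a)\ \ge\ c(F-c;a)\ =\ c(F;a)-c\ =\ \val(a).
\]

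For the reverse inequality $c(F+K;a)\le\val(a)$, the strategy is to apply Theorem~\ref{thm:main_energy_bound} to create an action gap in the Floer complex of $F+K$. Viewed as the constant homotopy $H_s\equiv F+K$, one has $H_s|_N=c\,h-c$, matching the hypothesis with $\varepsilon=c$ and $\beta=-c$. After a non-degenerate $C^\infty$-small perturbation, the $1$-periodic orbits of $F+K$ separate into ``inner'' orbits (inside $V$) and ``outer'' orbits (outside $\hat V$). Theorem~\ref{thm:main_energy_bound} forces every Floer trajectory of $F+K$ crossing $\partial\hat V$ in the uphill direction (from $V$ to $M\setminus\hat V$) to have energy $>c$, which by the action-energy identity $E=\cA(x_-)-\cA(x_+)$ produces an action gap of size $>c$ between any inner and any outer generator it connects via $\partial$. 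Combining this with the analogous statement for a continuation homotopy from $F+K$ to a $C^2$-small Morse Hamiltonian $\varepsilon_0 g$ -- engineered, via Remark~\ref{rem:main_energy_bound}\ref{itm:energy_bnd_perturbation}, to take the form $c\,h(x)+\beta(s,t)+h'(x,s,t)$ on $N$ -- one shows that the class $a$ in $HF(F+K)$ admits a chain representative supported on generators of action $\le\val(a)+\delta$ for arbitrary $\delta>0$. Letting $\delta\to 0$ yields $c(F+K;a)\le\val(a)$.

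\textbf{Main obstacle.} The hard part is the bookkeeping of actions through both the continuation (or PSS) map and the Floer differential on $CF(F+K)$: one must argue that every inner generator of high action that appears in a cycle representing $a$ can be cancelled by a Floer boundary which -- by Theorem~\ref{thm:main_energy_bound} -- is forced to use only downhill crossings of $\partial\hat V$, never uphill ones. This ultimately traps the cycle representing $a$ inside the outer subcomplex up to boundaries. Arranging the auxiliary homotopy so that it fits the hypothesis of Theorem~\ref{thm:main_energy_bound} globally, and controlling the $s$-dependent perturbation $h'$ within the size allowed by Remark~\ref{rem:main_energy_bound}\ref{itm:energy_bnd_perturbation}, is the technical crux; the rest is an adaptation of the spectral-killer scheme of \cite{seyfaddini2014spectral,tanny2021max} with Theorem~\ref{thm:main_energy_bound} replacing the earlier reliance on contact-type boundaries.
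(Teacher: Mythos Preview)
Your construction of $K$ differs from the paper's in a way that creates a genuine problem. You set $K|_N=-c(1-h)$, so that $(F+K)|_N=c\,h-c$; to invoke Theorem~\ref{thm:main_energy_bound} you then need $\varepsilon=c$. But Theorem~\ref{thm:main_energy_bound} only applies for $\varepsilon\in(0,\varepsilon_0)$, where $\varepsilon_0=\varepsilon_0(N,g_J,h)$ is a \emph{small} constant determined by the geometry (cf.\ the proof of Lemma~\ref{lem:dist_from_periodic}, where $\varepsilon_0=\rho/(2\|X_h\|_g)$ with $\rho$ a Lebesgue number). There is no mechanism for making $\varepsilon_0$ large while keeping $C(N,g_J,h)>c$: the boundary conditions $h|_{\partial V}=0$, $h|_{\partial\hat V}=1$ pin down the scale of $h$, and the definition of $w(N)$ takes a supremum over $C(N,g_J,h)$, not over $\varepsilon_0$. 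Your assertion that one can choose $h,J$ so that ``$c$ is an admissible value of the parameter $\varepsilon$'' is unjustified and in general false.

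The paper avoids this by taking $K$ to be (a smoothing of) $-c\cdot\mathds{1}_{\hat V}$, i.e.\ \emph{constant} equal to $-c$ on all of $\hat V$ including $N$. The term $\varepsilon h$ then enters only as a small perturbation (Remark~\ref{rem:choice_of_perturbations}), with $\varepsilon$ chosen independently of $c$ and as small as one likes. This decoupling is essential: it lets Theorem~\ref{thm:main_energy_bound} apply with an arbitrarily small $\varepsilon$, while the size $c$ of the killer is absorbed into the $s$-dependent term $\beta(s,t)$. Beyond this, your sketch of the upper bound $c(F+K;a)\le\val(a)$ correctly identifies the need for a continuation to a small Morse function and an action-gap argument, but the actual mechanism---producing a chain representative of $a$ whose generators outside $V$ already have low action (Proposition~\ref{prop:low_actions_outside}), and then tracking it through the continuation map case-by-case---is where the work lies, and your proposal does not supply it.
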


\begin{rem}[Foundations of Floer homology on general closed manifolds] \label{rem:foundations}
    When $(M,\omega)$ is a general closed manifold (i.e., not necessarily semipositive), one needs to consider virtual counts of moduli spaces, such as \cite{pardon2016algebraic}, in order to construct  Floer homology. We expect our results and methods to hold for any type of virtual counts, for which (apart from the well-definedness of Floer homology) the following holds:
    \begin{enumerate}
        \item Given an open subset $U\subset M$ and a moduli space $\cM$ of stable maps such that all of its elements are contained in $U$, the virtual count of $\cM$ depends only on the Floer data (i.e. an almost complex structure and a Hamiltonian / homotopy) restricted to $U$.
        \item The virtual count of an empty moduli space (when there are no stable maps between two given generators) is zero.
        \item The Floer chain complex of a $C^\infty$-small time independent Hamiltonian coincides with its Morse chain complex.
    \end{enumerate}
\end{rem}

\subsection{Application to Polterovich's Poisson bracket invariant}
A central application of the max-inequality (\ref{eq:max_ineq}) concerns the Poisson bracket invariant of covers, which was defined by Polterovich in \cite{polterovich2014symplectic}. This invariant assigns a non-negative number, $pb(\cU)$, to a finite open cover $\cU=\{U_i\}$ of a closed symplectic manifold. The Poisson bracket invariant is known to be strictly positive when the cover consists of displaceable sets. Polterovich conjectured a lower bound for this invariant, which can be interpreted as an uncertainty principle:
\begin{conj}[Polterovich,  \cite{polterovich2014symplectic}]
	Let $(M,\omega)$ be a closed symplectic manifold. There exists a constant $C_M$, depending only on the symplectic manifold $(M,\omega)$, such that for every finite open cover $\cU=\{U_i\}$ of $M$,
	\begin{equation*}
	pb(\cU)\geq \frac{C_M}{e(\cU)},
	\end{equation*}  
	where $e(\cU):=\max_i e(U_i)$ is the maximal displacement energy of a set from $\cU$.
\end{conj}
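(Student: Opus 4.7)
The plan is to run the standard spectral-invariants reduction developed in \cite{polterovich2014symplectic,seyfaddini2014spectral,tanny2021max}, which converts a max-inequality for partition-of-unity Hamiltonians into a lower bound on $pb(\cU)$, feeding Theorem~\ref{thm:max-ineq} as the new spectral input. Given a finite cover $\cU=\{U_i\}_{i=1}^N$ of $M$ by displaceable open sets with $e(\cU)=e$, the strategy has four stages: first, refine and thicken $\cU$ so that each member contains a tubular collar of Floer width comparable to $e$; second, split the refinement into a bounded number of colour classes of pairwise disjoint sets; third, apply Theorem~\ref{thm:max-ineq} within each colour class to bound the spectral invariants of partition-of-unity sums by $e$; fourth, invoke the Entov--Polterovich--Zapolsky Poisson-bracket Lipschitz formalism for spectral invariants to extract $pb(\cU)\geq C_M/e$.

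Concretely, I would first refine $\cU$ to a cover $\cV=\{V_j\}$ of multiplicity at most $2n+1$, using the standard dimension-theoretic refinement of open covers of $2n$-manifolds. Its nerve is then $d$-colourable for some $d=d(n)$, giving a partition of indices into classes $\cI_1,\dots,\cI_d$ within which the sets $V_j$ are pairwise disjoint. Inside each $V_j$, using a Darboux chart inside a member of $\cU$ containing it together with Example~\ref{exa:width_between_balls}, I would install a ball-shaped tubular collar $N_j\subset V_j$ whose Floer width satisfies $w(N_j)\geq \kappa\cdot e$ for a universal constant $\kappa=\kappa(n)$; the partition is chosen so that the collars $N_j$ within each colour class remain pairwise disjoint.

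With two partitions of unity $\vec f=(f_j)$, $\vec g=(g_j)$ subordinate to a sufficient shrinking of $\cV$ and the parametric families $F_{\vec s}=\sum_j s_jf_j$, $G_{\vec t}=\sum_j t_jg_j$ for $\vec s,\vec t\in[0,1]^N$, Theorem~\ref{thm:max-ineq} applied within each colour class $\cI_\alpha$ gives
\[
c\bigl(\textstyle\sum_{j\in\cI_\alpha}s_jf_j;[M]\bigr)\leq e,
\]
since $c(s_jf_j;[M])\leq e$ by the energy--capacity inequality and $e<w(N_j)$ by construction. Summing across the $d$ colour classes via the triangle inequality for spectral invariants yields $c(F_{\vec s};[M])-c(0;[M])\leq d\cdot e$ uniformly in $\vec s$. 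Running this uniform bound through the Entov--Polterovich Lipschitz formalism exactly as in \cite[Sec.\ 4]{polterovich2014symplectic} (using that $F_{(1,\dots,1)}=1$ and the shift axiom to produce a definite variation of order $1$ in $c(F_{\vec s};[M])$ along the diagonal path) converts the bound into $pb(\vec f,\vec g)\geq C_M/e(\cU)$; taking the infimum over all subordinate partitions of unity gives the conjecture, with $C_M$ depending on $\dim M$ through $d$ and $\kappa$.

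The main obstacle is the geometric step of producing, for an arbitrary displaceable cover, the required Floer-wide collars. Example~\ref{exa:width_between_balls} produces Floer width of order $(R_+-R_-)^2$ for ball-shaped collars, so a collar of Floer width $e$ demands a symplectic ball of thickness $\sim\sqrt{e}$; embedding such balls inside arbitrary displaceable sets is constrained by symplectic rather than volumetric capacities, and thin displaceable sets (e.g.\ tubular neighbourhoods of skeleta or long thin cylinders) need not accommodate them. Overcoming this appears to require either strengthening Theorem~\ref{thm:main_energy_bound} to yield an intrinsic Floer-width estimate proportional to the displacement energy on any collar of any displaceable set, or restricting attention to covers whose members have controlled shape (such as sublevel sets of plurisubharmonic exhaustions or star-shaped Darboux domains), for which Example~\ref{exa:width_between_balls} directly applies. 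A genuinely new geometric input beyond the present paper seems necessary to reach arbitrary displaceable covers.
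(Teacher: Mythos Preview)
The statement you are attempting to prove is labelled \emph{Conjecture} in the paper, and the paper does not prove it; immediately after stating it, the authors write that it is established only for surfaces and is ``still open'' in higher dimensions. The paper's own contribution in this direction is the weaker bound
\[
pb(\cU)\;\geq\;\frac{1}{2\,d(\hat\cU)^2\,e(\cU)}
\]
for very special covers (symplectically embedded balls with controlled extensions), where the degree $d(\hat\cU)$ of the extended cover appears explicitly and is not absorbed into a constant $C_M$ depending only on $(M,\omega)$. There is therefore no ``paper's own proof'' of the full conjecture to compare your proposal against.

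Your proposal is not a proof, and you say so yourself in the final paragraph: the step of manufacturing, inside an arbitrary displaceable open set, a tubular collar whose Floer width dominates the displacement energy of the set is exactly what is missing, and nothing in the paper supplies it. Two further issues compound this. First, Theorem~\ref{thm:max-ineq} is stated only for semipositive $(M,\omega)$, whereas the conjecture is for arbitrary closed symplectic manifolds; even a flawless execution of your outline would establish only a semipositive case. Second, your dimension-theoretic refinement to multiplicity $2n+1$ controls the combinatorics of the refined cover $\cV$, but the collars $N_j$ you need must lie in the gaps $V_j\setminus V_j'$ between a member and its shrinking; these gaps can be made arbitrarily thin after refinement, so Example~\ref{exa:width_between_balls} gives $w(N_j)$ of order the square of that thickness, which bears no a priori relation to $e(V_j)$ or $e(\cU)$. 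In short, the obstruction you flagged is genuine and is precisely why the conjecture remains open beyond dimension two.
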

This conjecture was proved for the case where $M$ is any  surface in \cite{buhovsky2020poisson}, and for surfaces other than the sphere in \cite{payette2018geometry}. In higher dimensions the conjecture is still open. In \cite{entov2006quasi}, Entov, Polterovich and Zapolsky proved a lower bound for $pb$ that decays with the number of sets. Using the arguments of Polterovich \cite{polterovich2014symplectic} one can use the max inequality (whenever it is proved)  to obtain a better lower bound for $pb$ that decays with the {\it degree} of the cover, which is the maximal number of sets intersected by a single set:
\begin{equation*}
d(\cU):=\max_i \#\{j:\bar U_i\cap \bar U_j \neq \emptyset\}.
\end{equation*}
This approach was taken in \cite{polterovich2014symplectic, seyfaddini2014spectral, ishikawa2015spectral}, though these works do not prove a max-inequality, but weaker statements that are sufficient to obtain the lower bound for $pb$. We refer to \cite{tanny2021max} for a more detailed overview of the above works and the relation between the max-inequality and the Poisson bracket invariant. 

{Note that the works mentioned above are all restricted to open covers by sets which are domains with contact-type boundaries.} Moreover, they all require the manifold to be rational, i.e. that $\omega(\pi_2(M))$ is a discrete subgroup of $\R$.  Theorem~\ref{thm:max-ineq} does not require the supports to be contained in domains with contact-type boundaries and therefore significantly extends the scope of open covers  for which  the Poisson bracket invariant can be bounded in terms of the degrees of covers. Moreover, Theorem~\ref{thm:max-ineq} applies to all semipositive manifolds. In particular, our results provide the first examples of irrational symplectic manifolds on which the improved bound for the Poisson bracket invariant is achieved.

\vspace{3pt}

\begin{example}
Let $(M,\omega)$ be a closed symplectic manifold of dimension at most $6$. Let $\cU:=\{U_i\}_i$ be a cover of $M$ by symplectically embedded balls, $U_i:=\psi_i(B^{2n}(r_i))$ such that for each $i$ the embedding $\psi_i$ extends to a symplectic embedding of the ball $B^{2n}(r_i/\kappa)$, where $\kappa>0$ is the constant from Example~\ref{exa:max_ineq_dim_6}. Denoting $\hat U_i:=\psi_i (B^{2n}(r_i/\kappa))$ and $\hat\cU:=\{\hat U_i\}$, we have 
\[
pb(\cU)\geq  \frac{1}{2\cdot d(\hat \cU)^2\cdot \max_i \{\pi r_i^2\} } \geq \frac{1}{2\cdot d(\hat \cU)^2\cdot e(\cU)}.
\]
\end{example}

\subsection*{Acknowledgement.}
   \noindent We thank Leonid Polterovich, Yaniv Ganor, Yael Karshon and Guangbo Xu for useful discussions. L.B. was partially supported by ERC Starting Grant 757585 and ISF grant 2026/17.
   S.T. was supported by a grant from the Institute for Advanced Study School of Mathematics, as well as by  the generosity of Eric and Wendy Schmidt by recommendation of the Schmidt Futures program.

\section{Preliminaries}\label{sec:preliminaries}
Let us review the necessary preliminaries and fix some notations. Note that most of the exposition is restricted to the case when $(M,\omega)$ is semipositive, but we occasionally mention the analogous results for general closed manifolds. 

\subsection{Hamiltonian flows and the action functional.} Given a Hamiltonian $F:M\times S^1\rightarrow\R$, its symplectic gradient is the vector field defined by the equation $\omega(X_F, \cdot) = -dF$ and the flow $\varphi_F^t $ of this vector field is called the Hamiltonian flow of $F$.
The set of 1-periodic orbits of $\varphi_F^t$ is denoted by $\cP(F)$. The Hamiltonian $F$ is called {\it non-degenerate} if the graph of $\varphi_F^1$ is transversal to the diagonal in $M\times M$. Equivalently, $F$ is non-degenerate if every $x\in \cP(F)$ is non-degenerate, that is, if 1 is not an eigenvalue of $d\varphi_F^1(x(0))$ for every $x\in \cP(F)$.

We denote by  $\cL M$ the space of smooth contractible loops in $M$. A capping disk of $x\in\cL M$ is a map $D:\D\rightarrow M$ from the unit disk $\D\subset \R^2$ to $M$, satisfying $D|_{\partial \D}=x$. Two capping disks $D_1, D_2$ of $x$ are equivalent if $[D_1\#(-D_2)]\in \ker \omega\cap \ker c_1$, where $c_1$ is the first Chern class of $M$. 
Throughout the paper we identify $(x,D)$ with the equivalence classes of this relation. We write 
\[
(x_1,D_1)=(x_2,D_2) \quad\text{if}\quad x_1=x_2 \quad\text{and}\quad [D_1\#(-D_2)]\in \ker \omega\cap \ker c_1.
\]
We denote by $\widetilde{\cL M}$ the space of equivalence classes of capped loops, $(x,D)$. 
The action functional corresponding to $F$ is defined on the space $\widetilde{\cL M}$ by
\begin{equation*}
\cA_F(x,D) = \int_0^1 F(x(t),t)\ dt -\int_D \omega.
\end{equation*}
The critical points of the action functional are (equivalence classes of) capped 1-periodic orbits of $\varphi_F^t$ and the set of their values is denoted by $\spec(F)$. \\

\textbf{Almost complex structures.}
Let $\cJ$ be the space of almost complex structures on $M$ that are compatible with $\omega$. Denote by $\cJ_{\operatorname{reg}}\subset\cJ$ the set of \emph{regular} almost complex structures, i.e., all smooth almost complex structures $J\in\cJ$ such that the linearized Cauchy-Riemann operator $D\bar\partial_J(v)$ is
surjective for every simple $J$-holomorphic sphere $v:S^2\rightarrow M$. Note that $\cJ_{\operatorname{reg}}$ is a residual set and thus a generic almost complex structure $J$ on $M$ is regular (see \cite{hofer1995floer}). All of the almost complex structures throughout the paper are assumed to be regular. 

\subsection{The Floer chain complex.}
Let us describe briefly the construction of the Floer chain complex over semipositive manifolds, following \cite{hofer1995floer}.

For a non-degenerate Hamiltonian $F$ and a regular almost complex structure $J$, the Floer chain complex, denoted by $CF_*(F,J)$ or $CF_*(F)$, is spanned over a field  $\Bbbk$ (as the simplest case, one can consider $\Bbbk=\Z_2$) by the  critical points of the action functional, namely, equivalence classes of capped periodic orbits. 
The grading of  $CF_*(F,J)$ is given  by  the Conley-Zehnder (abbreviated to CZ) index, see \cite{robbin1993maslov,gutt2014generalized} for definitions and properties of this index. 

The differential of this chain complex is defined by counting  negative-gradient flow lines of $\cA_F$, with respect to a metric induced by $J$ on $\widetilde{\cL M}$. These negative-gradient flow lines 
are maps $u:\R\times S^1\rightarrow M$ that solve the Floer equation
\begin{equation}\tag{FE}\label{eq:FE}
\partial_su(s,t)+J\circ u(s,t) \cdot\left(\partial_t u(s,t)-X_F\circ u(s,t)\right)=0.
\end{equation}
The {\it energy} of such a solution is defined to be
$
E(u):=\int_{\R\times S^1}\|\partial_s u\|_J^2\ ds\ dt
$, where $\|\cdot\|_J$ is the norm induced by the  inner product associated to $J$, $g_J(\cdot,\cdot):=\omega(\cdot,J\cdot)$. When the Hamiltonian $F$ is non-degenerate, for every solution $u$ with finite energy, there exist $x_\pm \in \cP(F)$ such that $\lim_{s\rightarrow \pm\infty}u(s,t)=x_\pm(t)$. If, in addition, $[D_-\#u\#(- D_+)] =0$,  we say that $u$ {\it connects } $(x_\pm, D_\pm)$. The well known energy identity for such solutions is a consequence of Stokes' theorem:     
\begin{equation}\label{eq:energy_id_Hamiltonian}
E(u):=\int_{\R\times S^1}\|\partial_s u\|^2_{J}\ ds\ dt
=\cA_{F_-}(x_-,D_-)-\cA_{F_+}(x_+,D_+),
\end{equation} 
It immediately follows from the following computation, which uses the Floer equation (\ref{eq:FE}):
\begin{align}\label{eq:energy_identity}
    E(u) &:=\int_{\R\times S^1}\|\partial_s u\|^2_{J}\ ds\ dt
     = \int_{\R\times S^1}\omega(\partial_s u, J\partial_s u)\ ds\ dt \nonumber\\
     &=\int_{\R\times S^1}\omega(\partial_s u, \partial_t u - X_F\circ u )\ ds\ dt \nonumber\\
     &=\int_{\R\times S^1}\omega(\partial_s u, \partial_t u)\ ds\ dt - \int_{\R\times S^1}\omega(\partial_s u,  X_F\circ u )\ ds\ dt \nonumber\\
     &=\int_{\R\times S^1} u^*\omega - \int_{\R\times S^1} u^*dF\wedge dt.
\end{align}
For two capped 1-periodic orbits $(x_\pm,D_\pm)$ of $F$, we denote by $\cM_{(F,J)}((x_-,D_-),(x_+,D_+))$ the set of all solutions $u:\R\times S^1\rightarrow M$ of the Floer equation (\ref{eq:FE}) that connect $(x_\pm, D_\pm)$. Notice that $\R$ acts on this set by translation in the $s$ variable. When $(M,\omega)$ is semipositive and $J\in \cJ_{\operatorname{reg}}$, there exists a residual set of Hamiltonians, called \emph{regular Hamiltonians}, for which the space $\cM_{(F,J)}((x_-,D_-),(x_+,D_+))$ is a smooth manifold whenever $\indCZ(x_-,D_-)-\indCZ(x_+,D_+)\leq 2$. The dimension of this manifold is $\indCZ(x_-,D_-)-\indCZ(x_+,D_+)$. We sometimes abbreviate $\ind(u):=\indCZ(x_-,D_-)-\indCZ(x_+,D_+)$ for $u\in \cM_{(F,J)}((x_-,D_-),(x_+,D_+))$.  The fact that this moduli space is a smooth manifold amounts to the surjectivity of the linearized Floer equation at every $u$ in the space. We therefore adopt the following terminology: we say that a solution $u$ to the Floer equation is \emph{regular} if the linearized Floer equation is surjective at $u$. Note that if all of the elements of $\cM_{(F,J)}(x_-,x_+)$ are regular then it is a smooth manifold as mentioned above. 
Dividing $\cM_{(F,J)}(x_-,x_+)$ by the $\R$ action, we obtain a manifold of dimension $\indCZ(x_-,D_-)-\indCZ(x_+,D_+)-1$. When the difference of CZ indices is 1, we obtain a zero dimensional manifold that is known to be compact, and thus is a finite set. In this case, the Floer differential is defined by a signed count of the points in this set.
\begin{align*}
\partial_{(F,J)}&:CF_*(F)\rightarrow CF_{*-1}(F)\\
\partial_{(F,J)}(\alpha) &:= \sum_{\tiny{\begin{array}{c} 
		(x_\pm,D_\pm)\text{ such that}\\ \indCZ(x_+,D_+)=\indCZ (x_-,D_-)-1
		\end{array}}} \alpha_{(x_-,D_-)}\cdot \#\left(\frac{\cM_{(F,J)}((x_-,D_-),(x_+,D_+))}{\R}\right)\cdot (x_+,D_+),
\end{align*}
where $\alpha=\sum_{(x,D)} \alpha_{(x,D)}\cdot (x,D)\in CF(F,J)$ and $\#$ is a signed $\Z$ count, 
see e.g. \cite{hofer1995floer,mcduff2012j}. 
\begin{rem}
On general closed symplectic manifolds the above moduli spaces  are not necessarily smooth manifolds. Roughly speaking, they are zeros of non-transverse sections of some Banach bundle. There are several works offering solutions to this issue through a replacement of  the count of elements of these moduli spaces by a \emph{virtual count}. This enables to define the Floer homology with coefficients in  $\Q$, see e.g.,  \cite{pardon2016algebraic,filippenko2022polyfold}. Recently, the construction of Floer homology over general closed manifold was upgraded to have $\Z$-coefficients \cite{bai2022arnold,rezchikov2022integral}. 
\end{rem}

\subsection{Floer homology and quantum homology.}
The homology of the complex $(CF_*(F),\partial_{(F,J)})$ is denoted by $HF_*(F,J)$ or $HF_*(F)$. A fundamental result in Floer theory states that Floer homology is isomorphic to  the quantum homology, which, as a vector space, coincides with the singular homology tensored with the Novikov ring. In more detail, let $\Gamma:=\pi_2(M)/(\ker(\omega)\cap\ker(c_1))$, then the Novikov ring is:
\begin{align*}
    \Lambda:= \Big\{ \sum_{A\in \Gamma} c_A q^A\ \big|\ c_A\in \Bbbk,\ \forall C\in\R,\#\{A\in \Gamma: c_A\neq 0, \omega(A)>C\}<\infty \Big\}.
\end{align*}
The Novikov ring has a natural functional, called \emph{valuation}, defined by 
$$
\val:\Lambda\rightarrow \R,\qquad \val\left(\sum_{A\in \Gamma} c_A q^A\right):= \max\{\omega(A):c_A\neq 0\}.
$$
The quantum homology is then given by $QH_*(M) = H_*(M;\Bbbk)\otimes \Lambda$, with the grading of $q^A$ being $2c_1(A)$. The valuation extends to it in the trivial way, $\val:QH_*(M)\rightarrow\R$. The quantum homology carries a product, called the \emph{quantum product} and denoted by $*$, which is a deformation of the classical intersection product on singular homology (see, for example,  \cite[Section 12.1]{polterovich2014function}).

When $F$ is a time independent  $C^2$-small Morse function, its 1-periodic orbits are its critical points, $\cP(F)\cong Crit(F)$, and their cappings correspond to elements in $\pi_2(M)$, or in $\Gamma$ after passing to equivalence classes.  In this case, the Floer complex with respect to $J\in\cJ_{\operatorname{reg}}$  coincides (up to a degree shift) with the Morse complex tensored with the Novikov ring:
\begin{equation*}
\left(CF_*(F), \partial_{(F,J)}^{Floer}\right) = 
\left(CM_{*+n}(F)\otimes \Lambda, \partial_{(F,\left<\cdot, \cdot\right>_J)}^{Morse}\otimes \id_{\Lambda}\right).
\end{equation*} For a proof, see, for example, \cite{hofer1995floer,mcduff2012j} for semipositive manifolds, \cite{audin2014morse} for aspherical manifolds and  \cite[Chapter 10]{pardon2016algebraic} for general closed symplectic manifolds.

\subsubsection{Specialized notations.}
We conclude this section by fixing notations that will be used later on. 
\begin{notation}\label{not:CF_element_contained}
	Let $\alpha=\sum_{(x,D)} \alpha_{(x,D)}\cdot (x,D)$ be an element of $CF_*(F)$.
	
	\begin{itemize}
		\item We say that $(x,D)\in \alpha$ if $\alpha_{(x,D)}\neq 0$.
		\item We denote the maximal action of an orbit from $\alpha$ by $\cA_F(\alpha):=\max\{\cA_F(x,D):\alpha_{(x,D)}\neq 0\}$. 
		
		\item For a subset $X\subset M$, let $C_X(F)\subset CF_*(F)$ be the subspace spanned by the 1-periodic orbits of $F$ that are contained in $X$.	Moreover, let $\pi_X:CF_*(F)\rightarrow C_X(F)$ be the projection onto this subspace. Note that $C_X(F)$ is not necessarily a subcomplex, and $\pi_X$ is not a chain map in general.
	\end{itemize}
\end{notation}

\subsection{Communication between Floer complexes using homotopies.}
Let $H:M\times S^1\times\R\rightarrow\R$ be a homotopy of Hamiltonians. Throughout the paper, we consider only homotopies that are constant outside of a fixed compact set. Namely, there exists $R>0$ such that $\partial_s H|_{|s|>R}=0$, and we denote by $H_\pm(x,t) := \lim_{s\rightarrow\pm \infty}H(x,t,s)$ the ends of the homotopy $H$. 
Given an almost complex structure $J\in \cJ_{\operatorname{reg}}$, we consider the Floer equation (\ref{eq:FE}) with respect to the pair $(H,J)$:
\begin{equation*}
\partial_su(s,t)+J\circ u(s,t) \cdot\left(\partial_t u(s,t)-X_{H_s}\circ u(s,t)\right)=0,
\end{equation*} 
where $H_s(\cdot,\cdot):=H(\cdot,\cdot,s)$. We sometimes refer this equation as ``the $s$-dependent Floer equation", to stress that it is defined with respect to a homotopy of Hamiltonians. For capped 1-periodic orbits $(x_\pm,D_\pm)$, we denote by $\cM_{(H,J)}((x_-,D_-),(x_+,D_+))$ the set of all solutions $u:\R\times S^1\rightarrow M$ of the $s$-dependent Floer equation  that satisfy $\lim_{s\rightarrow \pm\infty}u(s,t)=x_\pm(t)$ and $[D_-\#u\#(-D_+)]=0$. 
The  energy identity for homotopies is:   
\begin{align}\label{eq:energy_identity_homotopies}
    E(u) &:=\int_{\R\times S^1}\|\partial_s u\|^2_{J}\ ds\ dt
     = \int_{\R\times S^1}\omega(\partial_s u, J\partial_s u)\ ds\ dt \nonumber\\
     &=\int_{\R\times S^1}\omega(\partial_s u, \partial_t u - X_{H_s}\circ u )\ ds\ dt \nonumber\\
     &=\int_{\R\times S^1}\omega(\partial_s u, \partial_t u)\ ds\ dt - \int_{\R\times S^1}\omega(\partial_s u,  X_{H_s}\circ u )\ ds\ dt \nonumber\\
     &=\int_{\R\times S^1} u^*\omega -\int_{\R\times S^1} dH(\partial_su)ds\wedge dt\nonumber\\
     &=\int_{\R\times S^1} u^*\omega - \int_{\R\times S^1} d(H\circ u\ dt) + \int_{\R\times S^1} \partial_s H\circ u\  ds\ dt.
\end{align}
From this computation one can see that
\begin{equation}\label{eq:energy_id_homotopies}
	E(u)=\cA_{H_-}(x_-,D_-)-\cA_{H_+}(x_+,D_+) +\int_{\R\times S^1}\partial_s H\circ u\ ds\ dt. 
\end{equation}
As in the case of Hamiltonians, there  exists a residual set of homotopies called regular, for which the spaces $\cM_{(H,J)}((x_-,D_-),(x_+,D_+))$ are smooth compact manifolds whenever $\indCZ(x_-,D_-)-\indCZ(x_+,D_+)\leq 2$. Note that the ends $H_\pm$ of a regular homotopy $H$ are regular Hamiltonians.
\begin{rem}[Achieving regularity.]\label{rem:achiving_regularity}
    Let us review useful known results regarding achieving regularity of moduli spaces via perturbations of Hamiltonians or homotopies. 
    \begin{enumerate}
    \item Given  $J\in \cJ_{\operatorname{reg}}$ and a non-degenerate Hamiltonian $F$, it is sufficient consider perturbations of $F$ that agree with $F$ up to second order on its periodic orbits, see e.g. \cite{audin2014morse} or \cite[Section 9]{ganor2020floer}\footnote{The results of \cite{audin2014morse,ganor2020floer} are stated for aspherical manifolds, i.e. when $\pi_2(M)=0$. However, the transversality proofs are identical in our case, given the fact that on semipositive manifolds and for $J\in \cJ_{\operatorname{reg}}$ one can avoid sphere bubbling, see e.g. \cite{hofer1995floer}.}. In particular, such generic perturbations have the same periodic orbits and action spectrum as $F$.
     \item 
     As proved  in \cite[Section 9]{ganor2020floer}, given a generically chosen $J$ and a homotopy $H$ it is sufficient to consider perturbations $H':M\times S^1\times \R\rightarrow\R$ that are independent of $s$ when $s\notin [-R,R]$  for some fixed $R>0$. Therefore, we consider in this paper only homotopies $H$ such that $\partial_sH$ is supported in $M\times S^1\times [-R,R]$ for some fixed $R>0$.
    \end{enumerate}
\end{rem}

The {\it continuation map} is a degree-preserving chain map between the Floer complexes of the ends, $\Phi:CF_*(H_-)\rightarrow CF_*(H_+)$, defined by 
\begin{equation}\label{eq:continuation_def} 
\Phi(\alpha)=\sum_{\tiny{\begin{array}{c} 
		(x_\pm,D_\pm),\\ \indCZ(x_+,D_+)=\indCZ(x_-,D_-)
		\end{array}}}\alpha_{(x_-,D_-)}\cdot \#\cM_{(H,J)}((x_-,D_-),(x_+,D_+))\cdot (x_+,D_+).
\end{equation} 
The map $\Phi$  induces isomorphism on homology.  When the homotopy $H$ is independent of $s$, i.e. $H=H_-=H_+$, the moduli space $\cM_{(H,J)}((x_-,D_-),(x_+,D_+))$ is zero dimensional and invariant under $\R$-translation, and thus contains only the constant solution:
\[
\cM_{(H,J)}((x_-,D_-),(x_+,D_+))= \begin{cases}
    \emptyset, &\text{ if }(x_-,D_-)\neq (x_+,D_+),\\
    u\equiv x_- , &\text{ if }(x_-,D_-)= (x_+,D_+).
\end{cases}
\]
As a consequence, the continuation map in this case is simply the identity $\id:CF_*(H_-)\rightarrow CF_*(H_-)$. The next lemma is a local version of this phenomenon, allowing for perturbations as well. 
\begin{lemma}\label{lem:almost_const_homotopy}
    Let $U\subset M$ be a open subset and let $H:M\times S^1\times \R\rightarrow\R$ be a homotopy such that:
    \begin{enumerate}
        \item $(H_\pm, J)$ are regular, 
        \item In $U$, $\partial_s H$ vanishes for all $s\in \R$ and $t\in S^1$ (in particular, $H_-|_U=H_+|_U$)\footnote{To simplify the notation, we abbreviate $H|_{U\times S^1\times \R}$ to simply $H|_U$.}.
    \end{enumerate}
    Fix $(x_\pm, D_\pm)$ in $U$ of the same index. Suppose that for any homotopy $H'$ such that $H'|_U$ is $C^\infty$-close to $H|_U$ and $H'_\pm=H_\pm$, every  
    $
    u\in \cM_{(H',J)}((x_-,D_-),(x_+,D_+))
    $
     is contained in $U$. Then \[
     \#\cM_{(H',J)}((x_-,D_-),(x_+,D_+))= \begin{cases}
    0, &\text{ if }(x_-,D_-)\neq (x_+,D_+),\\
    1, &\text{ if }(x_-,D_-)= (x_+,D_+).
\end{cases}
\] In particular, when $U=M$, the  continuation map   $\Phi':CF_*(H_-')\rightarrow CF_*(H_+')$ corresponding to $H'$ is the identity. 
\end{lemma}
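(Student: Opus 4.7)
My plan is to exploit the $s$-translation symmetry of the Floer equation on $U$, where $H$ is $s$-independent, and then transfer the conclusion to the perturbation $H'$ by a Gromov--Floer compactness argument combined with the implicit function theorem. The core dimension check is that $\cM_{(H,J)}((x_-,D_-),(x_+,D_+))$ has expected dimension $\indCZ(x_-,D_-)-\indCZ(x_+,D_+)=0$, whereas any non-constant solution contained in $U$ would, under $s$-translation, generate a $1$-parameter family of distinct elements of the same moduli space.

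First I would analyse the unperturbed situation $H'=H$. Any solution $u$ confined to $U$ satisfies the $s$-independent Floer equation determined by $H_-|_U=H_+|_U$, so by the dimension check it must be $s$-constant, i.e.\ $u\equiv x$ for a $1$-periodic orbit $x$. This forces $x_-=x_+=x$ and, via $[D_-\#u\#(-D_+)]=0$ with $u$ constant, $D_-=D_+$. At such a constant solution the linearised Floer operator takes the form $\partial_s+J_0\partial_t+S(t)$ with coefficients determined by the Hessian of $H_-$ along $x$; non-degeneracy of $x$ makes the asymptotic operator $J_0\partial_t+S(t)$ spectrally invertible, and a standard exponential-decay argument on the two ends then shows the full operator is a Fredholm isomorphism of index $0$. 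Hence the constant solution is a regular point of the moduli space, with sign $+1$ by the usual orientation conventions.

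Next I would pass to a given regular perturbation $H'$, proceeding via Gromov compactness. Consider any sequence of regular homotopies $H^n\to H$ within the chosen $C^\infty$-neighbourhood, and solutions $u_n\in\cM_{(H^n,J)}((x_-,D_-),(x_+,D_+))$; the energy identity (\ref{eq:energy_id_homotopies}) yields a uniform energy bound, semipositivity rules out sphere bubbling, and the no-escape hypothesis confines the $u_n$ to $\overline{U}$. Floer--Gromov compactness then produces a (possibly broken) limit contained in $\overline{U}$, all of whose pieces have non-negative index drop summing to zero; therefore every piece has index drop zero and, by the previous paragraph applied to each piece, is constant. No genuine breaking occurs, and the limit is the constant cylinder $u\equiv x\in\overline{U}$. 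The implicit function theorem at this regular constant solution produces, in the case $(x_-,D_-)=(x_+,D_+)$, exactly one nearby solution for any such $H'$, while in the opposite case no limit candidate exists and the moduli space is empty for every $H'$ in a small enough neighbourhood.

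The main obstacle is the compactness step: a priori a Gromov limit could break at an intermediate $1$-periodic orbit outside $U$, spoiling the $s$-translation argument applied piece by piece. The no-escape hypothesis is precisely what rules this out --- it forces the entire limit configuration, including every breaking orbit, into $\overline{U}$, so the dimension/translation argument of the first paragraph applies to each piece individually. Once this matrix-entry computation is in place, the ``in particular'' statement for $U=M$ follows by applying the count to every pair of capped orbits of $H'_-=H'_+$, yielding $\Phi'=\mathrm{id}$.
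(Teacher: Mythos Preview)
Your approach is essentially correct and shares the core ideas with the paper's proof: exploit the $s$-independence of $H|_U$ together with regularity of $(H_-,J)$ to see that $\cM_{(H,J)}((x_-,D_-),(x_+,D_+))$ consists only of the (regular) constant solution, then transfer this to $H'$ via a compactness argument. The paper packages the transfer differently: it chooses a generic path $\{H_\lambda\}_{\lambda\in[0,1]}$ from $H$ to $H'$ and shows that the parametric moduli space $\{(\lambda,u)\}$ is a compact smooth $1$-dimensional cobordism between the two endpoint moduli spaces. You instead invoke the implicit function theorem directly at the constant solution and use sequential Gromov--Floer compactness to exclude any additional solutions for $H'$. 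Both routes work; the cobordism framing is the standard one in Floer theory and makes the signed-count equality automatic, while your formulation is slightly more direct in one respect --- since your limits are taken along $H^n\to H$, the limiting ``middle piece'' is already a solution for $H$ itself, so you avoid the paper's separate step of excluding negative-index pieces for intermediate $H_\lambda$.

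There is one genuine gap: the assertion that ``semipositivity rules out sphere bubbling'' is too fast. Semipositivity only guarantees that a non-constant $J$-holomorphic sphere $w_j$ satisfies $c_1(w_j)\geq 0$; combined with the index identity $\sum_i\ind(v_i)+\sum_j 2c_1(w_j)=0$ and $\ind(v_i)\geq 0$, this forces $c_1(w_j)=0$, but not that the $w_j$ are constant. To finish, you need the additional argument the paper supplies: once all Floer pieces $v_i$ are seen to be constant, the entire limit configuration is concentrated near the periodic orbit $x$, so any bubble must pass through an arbitrarily small neighborhood of $x$; but for $J\in\cJ_{\operatorname{reg}}$ and a generic Hamiltonian, the (compact) locus swept out by non-constant $J$-holomorphic spheres of Chern number zero is disjoint from the periodic orbits, which rules out non-constant bubbles. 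Without this step your compactness argument is incomplete.
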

\begin{proof}
    An analogous claim for the case where $M$ is aspherical was proved in \cite[Section 9.3.3]{ganor2020floer}. The only difference is that in our case one should account for sphere bubbles whenever taking limits. Ruling out sphere bubbles is possible due to the semipositive condition. For the convenience of the reader we include a sketch of the argument. 
    
    \vspace{3pt}
    
    Let $\{H_\lambda\}_{\lambda\in[0,1]}$ be a path of homotopies starting at $H_0=H$ and ending at $H_1=H'$. Suppose this path is constant near $\lambda=0,1$ and the ends $(H_\lambda)_\pm=H_\pm$ are the same for all $\lambda$. Moreover, we assume that for all $\lambda$, $H_\lambda|_U$ is sufficiently close to $H|_U$. Fix $(x_\pm, D_\pm)$ of the same index and assume that all $u\in \cM_{(H_\lambda, J)}(x_\pm, D_\pm)\subset U$ for all $\lambda$. Since $H$ coincides on $U$ with the regular Hamiltonian (or, constant homotopy) $H_-$, all solutions with respect to $(H,J)$ that are contained in $U$ are regular. In particular, this means that all elements of $\cM_{(H_0, J)}(x_\pm, D_\pm)$ are regular and thus $\cM_{(H_0, J)}(x_\pm, D_\pm)$ is a smooth zero dimensional manifold. The semipositive condition guarantees that it is also compact (a similar argument is sketched below. Alternatively, see \cite[Theorem 3.3]{hofer1995floer}).
    Consider the parametric moduli space \[
    \cM:=\{(\lambda, u): u\in \cM_{(H_\lambda, J)}(x_\pm,D_\pm), \lambda \in[0,1]\}.
    \]
    For a generic such path $\{H_\lambda\}$ (with fixed $H_0, H_1$ that are regular in $U$) the space $\cM$ is a smooth manifold (see \cite[Claim 9.33]{ganor2020floer}). Let us show that when  $H_\lambda|_U$ is close enough to $H|_U$ for all $\lambda$, the manifold $\cM$ is compact. This will enable us to view $\cM$ as a cobordism between the moduli space with respect to $H=H_0$ and $H_1=H'$ and conclude the proof.

    Take any sequence $(\lambda_n, u_n)\in \cM$. After passing to a subsequence, we may assume that $\lambda_n\rightarrow \lambda_\star$ and $u_n$ 
    admits a subsequence converging to a broken trajectory $(v_1,\dots, v_k)$ with respect to $(H_{\lambda_\star},J)$, with $J$-holomorphic sphere bubbles $(w_1,\dots, w_\ell)$. Since $u_n$ are all contained in $U$, their limit is contained in the closure $\overline{U}$ of $U$. Notice that by continuity, $H$ coincides with $H_-$ on $\overline{U}$, and $H_\lambda$ are close to $H$ on $\overline{U}$.  
    Returning to the Floer trajectories in the limit of $u_n$, all of the $v_i$ but one (denote it by $v_{i_0}$) are solutions of the Floer equation with respect to $(H_\pm, J)$, which are regular $s$-independent pairs. In particular, $\ind(v_i)\geq 0$ for all $i\neq i_0$. Finally, $v_{i_0}$ is a solution with respect to $H_{\lambda_\star}$ that is contained in $\overline{U}$. When all homotopies $H_\lambda$ are close enough to $H$ on $\overline{U}$, the solution $v_{i_0}\subset \overline{U}$ cannot have a negative index (otherwise take a sequence of such $H_\lambda|_{\overline{U}} \rightarrow H|_{\overline{U}}$ and by similar arguments obtain a solutions with respect to $H$, that is contained in $\overline{U}$ and has a negative index). Overall we conclude that $\ind(v_i)\geq 0$ for all $i$.
    Recall that
    \begin{align*}
    \indCZ(x_-,D_-) - \indCZ(x_+,D_+) &= \ind(u_n) = \sum_{i=1}^k \ind(v_i)+\sum_{j=1}^\ell 2c_1(w_j).
    \end{align*}
    Since $u_n$ where continuation trajectories, $\indCZ(x_-,D_-) - \indCZ(x_+,D_+)=0$. On semipositive manifolds, $J$-holomorphic spheres cannot have negative Chern class (see \cite[Proposition 2.3]{hofer1995floer}). As we explained above, $\ind(v_i)\geq 0$ for all $i$ as well. Therefore, 
    \[
    \ind(v_i)=0=c_1(w_j) \qquad \text{for all}\qquad i,j.  
    \]
    Recall that for $i\neq i_0$, $v_i$ are solutions with respect to the $s$-independent Hamiltonians $H_\pm$ for which all index zero Floer solutions are constant. Therefore we conclude that the limit of $u_n$ contains a single Floer solution $v_{i_0}$ with respect to $H_{\lambda_\star}$. When all of the $H_\lambda$ are close enough to $H$, we must have $x_-=x_+$ and $v_{i_0}$ is close to the constant solution $u\equiv x_-$ (indeed, otherwise take a sequence of $H_\lambda$ converging to $H$ and by compactness obtain a Floer solution with respect to $(H,J)$ of index 0, which must be a constant solution). As a consequence, the $J$-holomorphic spheres $w_j$ must intersect an arbitrarily small neighborhood of $x_-$. By \cite[Theorem 3.1]{hofer1995floer}, the set if images of all non-constant $J$-holomorphic spheres of Chern class 0 is compact and does not intersect the periodic orbit of a generic Hamiltonian. Hence it will not intersect a small enough open neighborhood of $x_-$. We conclude that $w_j$ are all constant, which implies that, up to passing to a subsequence, the limit of $(\lambda_n,u_n)$ is simply $(\lambda_\star,v_{i_0})\in \cM$.  
    
    As a consequence, $\cM$ is a compact smooth 1-dimensional manifold. Its boundary is simply 
    \[
    \{1\}\times \cM_{(H_1, J)}(x_\pm,D_\pm)\cup -\Big(\{0\}\times\cM_{(H_0, J)}(x_\pm,D_\pm)\Big)
    \]
    where the minus sign accounts for reversing orientation. In particular, 
    \begin{align*}
        \# \cM_{(H_1, J)}(x_\pm,D_\pm)&=\# \cM_{(H_0, J)}(x_\pm,D_\pm) \\
        &= \# \cM_{(H_-, J)}(x_\pm,D_\pm) = \begin{cases}
            0, &\text{ if }(x_-,D_-)\neq (x_+,D_+),\\
            1, &\text{ if }(x_-,D_-)= (x_+,D_+),
        \end{cases}
    \end{align*}
    where the first equality in the second row is due to our assumption that the elements of $\cM_{(H_0, J)}(x_\pm,D_\pm)$ are contained in $U$, where $H_0=H$ coincides with $H_-$.
\end{proof}
\begin{rem}\label{rem:lemma_in_perliminaries}
    The analogous statement for the above lemma on general closed symplectic manifolds should be the following. Suppose that $H$ is as in the statement of Lemma~\ref{lem:almost_const_homotopy}, and that all stable maps connecting $(x_\pm, D_\pm)$ with respect to $(H,J)$ are contained in $U$. Then, the virtual count of $ \cM_{(H, J)}(x_\pm,D_\pm)$ coincides with that of $ \cM_{(H_-, J)}(x_\pm,D_\pm)$ and equals to 1 if $(x_-,D_-)=(x_+,D_+)$  and zero otherwise.
\end{rem}

\subsection{Spectral invariants.}
The Floer complex admits a natural filtration by the action value. Let $CF_*^\lambda(F,J)$ be the sub-complex generated by (equivalence classes of) capped 1-periodic orbits whose action is bounded by $\lambda$ from above. Since the Floer differential is action decreasing, it restricts to the sub-complex $CF_*^\lambda(F,J)$ and the  homology $HF_*^\lambda(F,J)$ is well defined. 
The spectral invariant with respect to a non-zero class $a\in QH_*(M)$  is defined to be the smallest value of $\lambda$ for which the  class $a$ appears in $HF^\lambda_*(F,J)$, namely,
\begin{equation}
c(F;a) := \inf\{\lambda:a\in \im(\iota^\lambda_*)\},
\end{equation}
where $\iota^\lambda_*:HF_*^\lambda(F,J)\rightarrow HF_*(F,J)$ is the map induced by the inclusion $\iota^\lambda:CF_*^\lambda(F,J)\hookrightarrow CF_*(F,J)$.
Spectral invariants have several useful properties, let us state the relevant ones:  
\begin{itemize}
	\item (stability)  For any Hamiltonians $F$ and $G$,
	\begin{eqnarray} 
	\nonumber\int_{0}^{1} \min_{x\in M}(F(x,t)-G(x,t))dt \leq c(F;a) - c(G;a) \leq  \int_{0}^{1} \max_{x\in M}(F(x,t)-G(x,t))dt.
	\end{eqnarray} 
	In particular, $c(-;a):C^\infty(M\times S^1)\rightarrow\R$ is a continuous functional and is extended by continuity to degenerate Hamiltonians. Moreover, this implies that the spectral invariant is monotone: If $G(x,t)\leq F(x,t)$ for all $(x,t)\in M\times S^1$, then $c(G;a)\leq c(F;a)$.
	
	\item (spectrality) $c(F;a)\in\spec(F)$. 
	
	\item (subadditivity) For every Hamiltonians $F$ and $G$, and non-zero classes $a,b\in QH_*(M)$, one has $c(F\#G;a*b)\leq c(F;a)+c(G;b)$, where $F\# G:= F+G\circ(\varphi_F^t)^{-1}$ and $a*b$ is the quantum product of $a$ and $b$. Note that if $F$ and $G$ are disjointly supported then $F\#G = F+G$.
	\item (identity) For every non-zero $a\in QH_*(M)$, $c(0;a) = \val(a)$. 
	\item(energy-capacity inequality) If the support of $F$ is displaceable, its spectral invariants are bounded by the displacement energy of the support, namely, $c(F;a)-\val(a)\leq e(supp(F))$.
	We remind that a subset $X\subset M$ is displaceable if there exists a Hamiltonian $G$ such that $\varphi_G^1(X)\cap X=\emptyset$. In this case, the displacement energy of $X$ is given by 
	\begin{equation}\label{eq:disp_energy_def}
	e(X):=\inf_{G: \varphi_G^1(X)\cap X=\emptyset} \int_0^1 \left(\max_M G(\cdot,t)-\min_M G(\cdot,t)\right)\ dt. 
	\end{equation}
\end{itemize}
For a wider exposition see, for example, \cite{mcduff2012j,polterovich2014function}.
\subsection{Boundary depth.}
{In \cite{usher2011boundary}, Usher defined the {\it boundary depth} of a Hamiltonian $F$ to be the largest action gap between a boundary term in $CF_*(F)$ and its primitive having the smallest action.} 
\begin{defin}\label{def:boundary_depth}
     The boundary depth of $F$ is:
\begin{equation*}
\bd(F):=	\inf\left\{b\in \R\ \big|\  CF^\lambda_*(F)\cap\partial_{F,J}(CF_*(F))\subset \partial_{F,J} (CF_*^{\lambda+b}(F)),\ \forall \lambda\in \R\right\}.
\end{equation*}
\end{defin}
\noindent The boundary depth satisfies the following stability property \cite{usher2011boundary}: 
\[\bd(F)\leq \int_0^1 \big(\max_{x\in M}F(x,t)-\min_{x\in M} F(x,t)\big)\ dt.\]
In particular, when the Hamiltonian $F$ is $C^0$-small, its boundary depth is small, and hence any boundary chain $\alpha$ in $CF_*(F)$ admits a primitive $\beta$ such that the actions of $\alpha$ and $\beta$ are close.

\section{Spectral killers and the max-inequality}
In this section we explain why the existence of spectral killers implies the max-inequality. In particular, we prove that Proposition~\ref{prop:slow_killer} implies Theorem~\ref{thm:max-ineq}.
\begin{claim}\label{clm:max_ineq}
	Let $F_1,\dots, F_L$ be Hamiltonians supported in pairwise disjoint domains $V_1,\dots,V_L\subset M$ and let $a_i\in QH_*(M)$. Suppose there exist extended domains $\hat V_i\supset V_i$ that are pairwise disjoint, as well as Hamiltonians $K_i$ supported in $\hat V_i$, such that 
	$$
	c(F_i+K_i;a_i)=\val(a_i),\quad \text{and } \|K_i\|_{C^0} =c(F;a_i)-\val(a_i)\text{ for all }i.$$ 
	Then,
	\begin{equation}
	c(F_1+\cdots+F_L;a_1*\cdots *a_L)\leq\left(\sum_i\val(a_i)\right) + \max_j \{c(F_j;a_j)-\val(a_j)\}.
	\end{equation} 
\end{claim}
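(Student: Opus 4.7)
The plan is to combine three ingredients: subadditivity applied to the modified Hamiltonians $F_i+K_i$, the stability (Hofer-type Lipschitz) property of spectral invariants, and the disjoint-support structure of the extended domains, which lets one collapse a sum of $C^0$-bounds into a single maximum.

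First I would apply subadditivity to the Hamiltonians $F_i+K_i$. Since the $\hat V_i$ are pairwise disjoint, the Hamiltonians $F_i+K_i$ have pairwise disjoint supports, so their $\#$-composition coincides with their ordinary sum. Combined with the hypothesis $c(F_i+K_i;a_i)=\val(a_i)$ this yields
\[
c\Big(\sum_i (F_i+K_i);*_i a_i\Big)\leq \sum_i c(F_i+K_i;a_i)=\sum_i \val(a_i).
\]

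Next I would apply stability to compare $c(\sum_i F_i;*_i a_i)$ with the quantity just bounded, obtaining
\[
c\Big(\sum_i F_i;*_i a_i\Big)\leq c\Big(\sum_i (F_i+K_i);*_i a_i\Big)+\int_0^1 \max_{x\in M}\Big(-\sum_i K_i(x,t)\Big)\,dt.
\]
The task then reduces to bounding this error integral by $\max_i\bigl(c(F_i;a_i)-\val(a_i)\bigr)$. Here the disjoint-support hypothesis is crucial: because each $K_i$ is supported in $\hat V_i$ and the $\hat V_i$ are pairwise disjoint, at every point $x\in M$ at most one summand $K_i(x,t)$ is nonzero. Consequently, for every $t\in S^1$,
\[
\max_{x\in M}\Big(-\sum_i K_i(x,t)\Big)\leq \max_i \|K_i(\cdot,t)\|_{C^0(M)}\leq \max_i \|K_i\|_{C^0}=\max_i\bigl(c(F_i;a_i)-\val(a_i)\bigr),
\]
and integrating over $[0,1]$ preserves the same upper bound. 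Stringing the three displays together gives the claimed inequality.

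The argument is essentially algebraic, so I do not anticipate a real obstacle; the only step requiring some care is the final one, where the disjointness of supports is used to turn what naively looks like a sum of $C^0$-norms into their maximum. The genuine geometric content of Theorem~\ref{thm:max-ineq} is not in this claim at all but in the construction of the spectral killers, i.e.\ in Proposition~\ref{prop:slow_killer}, which is where the Sikorav-type Floer width $w(N_i)$ and the energy dichotomy of Theorem~\ref{thm:main_energy_bound} enter.
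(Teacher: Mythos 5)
Your proposal is correct and follows essentially the same route as the paper's proof: stability of spectral invariants, subadditivity for the disjointly supported $F_i+K_i$, and the disjointness of the supports of the $K_i$ to replace $\bigl\|\sum_i K_i\bigr\|_{C^0}$ by $\max_i\|K_i\|_{C^0}$. The only difference is the order in which you apply stability and subadditivity, which is immaterial.
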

\begin{proof}
	The following argument is an adaptation of a proof from \cite{seyfaddini2014spectral} for general quantum homology classes. 
	Using the stability and subadditivity properties of spectral invariants and noticing that the Hamiltonians $\{F_i+ K_i\}$ are all disjointly supported, we have
	\begin{eqnarray*}
		c(F_1+\cdots+F_L;a_1*\cdots*a_L) 
		&\leq& c\left(\sum_i (F_i+ K_i)\ ;\ a_1*\cdots*a_L\right) +\Big\|-\sum_i  K_i\Big\|_{C^0}\\
		&\leq& \sum_i c(F_i+ K_i;a_i) +\Big\|\sum_i K_i\Big\|_{C^0}= \sum_i \val(a_i) +\max_j \|K_j\|_{C^0} \\
		&=& \sum_i \val(a_i) + \max_j\ \{ c(F_j;a_j) - \val(a_j)\}.\qedhere
	\end{eqnarray*}
\end{proof}

\section{Energy bounds for ``up-hill" Floer trajectories}\label{sec:energy_bound}
    Our main goal for this section is proving Theorem~\ref{thm:main_energy_bound}, which gives a lower bound for the energy of Floer trajectories that cross a tubular neighborhood ``against" the direction of the negative gradient of $H$. 
    We start by fixing some notations.
    \begin{notation}
    Let $(M,g)$ be a Riemannian manifold.
    \begin{itemize}
        \item For a smooth curve $\gamma:[a,b]\rightarrow M$, denote
        $$
        \ell_g(\gamma):=\int_a^b|\dot \gamma(t)|_g\ dt \quad\text{and}\quad E_g(\gamma):=\int_a^b|\dot \gamma(t)|_g^2\ dt.$$
        \item For a vector field $X$, a $k$-form $\lambda$ and a subset $N\subset M$, denote
        $$\|X\|_{N,g}:=\sup_{x\in N}|X(x)|_g \quad \text{and} \quad \|\lambda\|_{N,g}:=\sup_{x\in N}|\lambda(x)|_g,$$
        where \[
        |\lambda(x)|_g:= \sup_{v_1,\dots v_k\in T_x M} \frac{|\lambda(v_1,\dots ,v_k)|}{|v_1|_g\cdots |v_k|_g}
        \]
        When $N=M$ we abbreviate to $\|X\|_{g}$ and $\|\lambda\|_{g}$. 
        Moreover, we denote $\|\lambda\|_{C^1,g}:=\|\lambda\|_g+\|d\lambda\|_g$.
    \end{itemize}
    \end{notation}
    
    The proof of Theorem~\ref{thm:main_energy_bound} requires the following lemmas.
    \begin{lemma}[Isoperimetric inequality]\label{lem:isoperimetric_ineq}
        Let $(N,g)$ be a compact Riemannian manifold (possibly with boundary). There exists a constant $C_{iso}>0$ depending only on $(N,g)$, such that for every smooth 1-form $\lambda $ on $N$ and any smooth loop $\gamma:\R/\Z\rightarrow N$ we have 
        \begin{equation*}
            \Big|\int_\gamma \lambda\ \Big|\leq \|\lambda\|_{C^1,g}\cdot C_{iso}\cdot  \ell_g(\gamma)^2.
        \end{equation*}
    \end{lemma}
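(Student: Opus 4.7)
The plan is to split the argument according to the length of $\gamma$. When $\ell_g(\gamma) \geq L_0$ for some threshold $L_0 > 0$ depending only on $(N,g)$, the trivial bound
$$
\Big|\int_\gamma \lambda\Big| \leq \|\lambda\|_g\cdot \ell_g(\gamma) \leq \frac{\|\lambda\|_g}{L_0}\cdot \ell_g(\gamma)^2
$$
already suffices. The real work is in the short-loop regime $\ell_g(\gamma) < L_0$, where the goal is to exhibit a smooth capping disk $D:\D\to N$ with $D|_{\partial \D}=\gamma$ and $\mathrm{area}_g(D) \leq C_1\,\ell_g(\gamma)^2$, and then apply Stokes' theorem:
$$
\Big|\int_\gamma \lambda\Big| \;=\; \Big|\int_D d\lambda\Big| \;\leq\; \|d\lambda\|_g\cdot \mathrm{area}_g(D) \;\leq\; C_1\|d\lambda\|_g\cdot \ell_g(\gamma)^2.
$$

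To avoid technicalities at $\partial N$, first I would pass to a closed Riemannian manifold $(\tilde N, \tilde g)$ containing $N$ isometrically -- for instance by doubling $N$ across $\partial N$ and smoothing the metric, or by isometrically extending $N$ into a slightly larger closed manifold. The form $\lambda$ extends (via a collar cutoff) to a smooth 1-form $\tilde \lambda$ on $\tilde N$ with $\|\tilde\lambda\|_{C^1,\tilde g} \leq C_{\mathrm{ext}}\|\lambda\|_{C^1,g}$, and every loop in $N$ is a loop in $\tilde N$, so it is enough to prove the lemma when $N$ is closed.

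Assuming $(N,g)$ is closed, its convexity radius $\rho$ is positive by compactness, and I set $L_0 := \rho$. Given a short loop with $L:=\ell_g(\gamma) < L_0$, take $p := \gamma(0)$; every point of $\gamma$ is within arclength at most $L/2$ of $p$, so $\gamma \subset B_g(p, \rho/2)$, which is geodesically convex. In normal coordinates at $p$, $\bar\gamma := \exp_p^{-1}\circ \gamma$ is a Euclidean loop contained in the Euclidean ball of radius $\rho/2$ around $0$. The radial cone $\bar D(r,\theta) := r\bar\gamma(\theta)$ pushed forward by $\exp_p$ yields a smooth disk $D:\D\to B_g(p,\rho/2)\subset N$ capping $\gamma$. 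A direct Jacobian computation in the Euclidean coordinates bounds the Euclidean area by $L^2/4$, and uniform comparability of $g$ to the Euclidean metric in normal coordinates (with constants depending only on $(N,g)$ by compactness) translates this to $\mathrm{area}_g(D)\leq C_1\,L^2$.

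Combining both regimes with $C_{iso} := C_{\mathrm{ext}}\cdot \max(C_1, 1/L_0)$ completes the proof. The main delicate step is the local filling estimate $\mathrm{area}_g(D) \leq C_1 \ell_g(\gamma)^2$: once the loop lies inside a convex normal ball and the cone is anchored at a point of the loop itself, the quadratic area bound reduces to the Euclidean case via a metric comparison, but it is crucial that the comparison constants be uniform over $N$, which is precisely what the compactness of $N$ provides.
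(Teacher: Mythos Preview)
Your argument is correct and follows the same overall scheme as the paper: split into long and short loops, use the trivial bound $|\int_\gamma\lambda|\leq\|\lambda\|_g\,\ell_g(\gamma)$ for long loops, and for short loops cap by a cone disk and apply Stokes. The implementation differs in how the boundary is handled. The paper works directly on $N$: it covers $N$ by finitely many coordinate balls and half-balls $\varphi_i:B_i\to N$, takes the Lebesgue number $\rho$ of this cover as the threshold, and for a short loop pulls back to the convex domain $B_i\subset\R^d$ where the straight-line cone $a(s,t)=s\hat\gamma(t)+(1-s)\hat\gamma(0)$ stays inside $B_i$ and hence inside $N$; Stokes is then applied to $\hat\lambda=\varphi_i^*\lambda$ with no extension needed. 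You instead isometrically embed $N$ into a boundaryless manifold, extend $\lambda$ with a controlled $C^1$ bound, and cone in normal coordinates. Your route is slightly more intrinsic but pays the price of the extension step (and the offhand ``double and smooth'' does not literally give an isometric copy of $N$; the collar-extend-then-double-away-from-$N$ version you also mention is what actually works). The paper's half-ball charts are more elementary and keep everything inside $N$, avoiding any extension of $\lambda$.
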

    \begin{proof}
        The proof of this lemma is a standard isoperimetric-inequality type argument. We include it here for the convenience of the reader. 
        Consider a finite open cover of $N$ by balls and half-balls (near the boundary, if $\partial N\neq \emptyset$) with charts. More formally, let $x_1,\dots,x_d$ be coordinates on $\R^d$ and consider domains $B_i\subset \R^d$, where $d=\dim{N}$, such that for each $i$  either  $B_i=B_0(r_i)$ is an open ball around the origin of radius $r_i$,  or $B_i=B_0(r_i)\cap\{x_1\geq 0\}$. For each $i$, denote 
        \[
        \partial B_i:= \begin{cases}
            \emptyset, &\text{ if }B_i=B_0(r_i),\\
            B_0(r_i)\cap\{x_1=0\}, &\text{ if }B_i=B_0(r_i)\cap\{x_1\geq 0\}.
        \end{cases}
        \] 
        Consider in addition smooth embeddings
        \[
        \varphi_i:(B_i,\partial B_i)\rightarrow (N,\partial N), \qquad  U_i:=\varphi_i(B_i) \quad\text{such that} \quad \cup_i U_i=N.
        \]
        By the Lebesgue lemma there exists $\rho>0$ with the following property. For all $x\in N$ there exists $i$ such that $B_{g}(x,\rho)\subset U_i$, where $B_g(x,\rho):=\{y\in N:d_g(x,y)< \rho\}$ is the ball around $x$ of radius $\rho$, with respect to the metric induced by $g$. We split into two cases, depending on the length of the loop $\gamma$. 

        \vspace{3pt}

        Starting with the case where $\ell_g(\gamma)\geq\rho$, 
        \begin{align}\label{eq:iso_long_loop}
                \Big| \int_\gamma \lambda \Big| &\leq \|\lambda\|_{g}\cdot \int_0^1 |\dot \gamma(t)|_g\ dt 
                =\|\lambda\|_{g}/\rho\cdot \rho\cdot \ell_g(\gamma) \leq \|\lambda\|_{g}/\rho\cdot \ell_g(\gamma)^2.
            \end{align}

        \vspace{3pt}
        Now suppose {$\ell_g(\gamma)<\rho$} and set  $x = \gamma(0)$. Since every path is contained in a ball of radius at most its length, $\gamma\subset B_g(x,\ell_g(\gamma))\subset B_g(x,\rho)$. By our choice of $\rho$, $\gamma\subset B_g(x,\rho)\subset U_i$ for some $i$. Consider the loop $\hat\gamma:=\varphi_i^{-1}\circ\gamma$ in $B_i\subset \R^d$ and the pullback 1-form $\hat \lambda:=\varphi_i^*\lambda$. 
            Let us first prove the lemma for $\hat\gamma$ and $\hat \lambda$. Consider the capping disk
            \[
            a:[0,1]^2\rightarrow B_i\subset \R^d,\qquad a(s,t)=s\hat\gamma(t)+(1-s)\hat\gamma(0).
            \]
            We remark that the image of $a$ is indeed contained in $B_i$ since $B_i$ is convex and $\hat\gamma\subset B_i$. Using Stokes' theorem, we see
            \begin{align}\label{eq:iso_ineq_for_Rd}
                \int_{\hat \gamma}\hat \lambda &= \int_{a}d\hat \lambda 
                = \int_0^1\int_0^1 d\hat \lambda \Big(\frac{\partial a}{\partial s},\frac{\partial a}{\partial t}\Big)\ ds\ dt
                = \int_0^1\int_0^1 d\hat \lambda \Big(\hat \gamma(t)-\hat \gamma(0),s\frac{d}{dt}\hat\gamma(t)\Big)\ ds\ dt \nonumber\\
                &\leq \|d\hat\lambda\|_{g_0} \cdot \int_0^1\int_0^1 s|\hat \gamma(t)-\hat \gamma(0)|_{g_0}\cdot \Big|\frac{d}{dt}\hat\gamma(t)\Big|_{g_0}\ ds\ dt \nonumber\\
                &\leq \|d\hat\lambda\|_{g_0} \cdot \int_0^1\int_0^1 s\ell_{g_0}(\hat\gamma)\cdot \Big|\frac{d}{dt}\hat\gamma(t)\Big|_{g_0}\ ds\ dt \nonumber\\
                &= \|d\hat\lambda\|_{g_0} \cdot\frac{1}{2}\cdot \ell_{g_0}(\hat\gamma)\cdot \int_0^1 \Big|\frac{d}{dt}\hat\gamma(t)\Big|_{g_0}\ dt
                =\|d\hat\lambda\|_{g_0}/{2}\cdot \ell_{g_0}(\hat\gamma)^2.
            \end{align}
            Having proved  the lemma for $\hat\gamma$ and $\hat \lambda$, let us relate the relevant measurements in $B_i\subset\R^d$ to the ones on $N$.
            Starting with the length of $\gamma$ and $\hat\gamma$, we see
            \begin{align}\label{eq:length_gamma_vs_hat}
                \ell_{g_0}(\hat\gamma) &= \int_0^1\Big|\frac{d}{dt}\hat\gamma(t)\Big|_{g_0} dt =\int_0^1 |d\varphi_i^{-1} \dot \gamma|_{g_0} dt \nonumber\\
                &\leq \|d\varphi_i^{-1}\|_{C^0}\cdot \int_0^1 \|\dot \gamma\|_{g} dt = \|d\varphi_i^{-1}\|_{C^0}\cdot \ell_g(\gamma).
            \end{align}
            Moreover,
            \begin{align}\label{eq:int_lambda_vs_hat}
                \Big|\int_{\hat\gamma} \hat\lambda \Big| = 
                \Big|\int_{\varphi_i^{-1}\gamma} \varphi_i^*\lambda \Big| = 
                \Big|\int_{0}^1 \lambda(d\varphi_i  d\varphi_i^{-1}\dot\gamma(t)) \ dt\ \Big| =\Big |\int_\gamma \lambda\Big|.
            \end{align}
            Lastly, a straightforward computation shows that $\|d\hat\lambda\|_{g_0} \leq \|d\lambda\|_{g}\cdot \|d\varphi_i\|_{C^0}^2$.
            Together with (\ref{eq:iso_ineq_for_Rd}), (\ref{eq:length_gamma_vs_hat}) and (\ref{eq:int_lambda_vs_hat}) this yields
            \begin{equation}
                \Big |\int_\gamma \lambda\Big|\leq  \|d\lambda\|_{g}/2\cdot \|d\varphi_i\|_{C^0}^2\cdot \|d\varphi_i^{-1}\|_{C^0}^2\cdot \ell_g(\gamma)^2
            \end{equation}
            Denote 
            \[
            L:= \max_i\|d\varphi_i\|_{C^0}\cdot \|d\varphi_i^{-1}\|_{C^0}. 
            \]
            Then, $ \Big |\int_\gamma \lambda\Big|\leq  \|d\lambda\|_{g} L^2/2\cdot \ell_g(\gamma)^2$.

        \vspace{3pt}

        Overall, setting $C_{iso}:=\max\{L^2/2, 1/\rho\}$ and recalling that $\|\lambda\|_{C^1,g}:= \|\lambda\|_{g}+\|d\lambda\|_{g}$, we conclude that, whatever the length of $\gamma$ is, 
        \begin{equation*}
            \Big |\int_\gamma \lambda\Big|\leq \|\lambda\|_{C^1,g}\cdot C_{iso}\cdot \ell_g(\gamma)^2.\qedhere
        \end{equation*}
    \end{proof}
    
    \begin{lemma}\label{lem:dist_from_periodic}
        Let $(N,g)$ be a compact Riemannian manifold (possibly with boundary) and let $X$ be a  non-vanishing vector field on $N$. There exists $0<\varepsilon_0$, depending on $X$, such that 
       for all $\varepsilon\in(0,\varepsilon_0]$ and every loop $\gamma:S^1\rightarrow N$, we have
        \begin{equation*}
            E_g(\gamma) = \int_0^1|\dot \gamma(t)|_g^2\ dt \leq 5 \int_0^1 |\dot \gamma(t) - \varepsilon X\circ \gamma(t)|_g^2\ dt.
        \end{equation*}
    \end{lemma}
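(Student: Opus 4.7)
The plan is to expand the squared norm $|\dot\gamma - \varepsilon X \circ \gamma|_g^2$ and absorb the cross term using the isoperimetric inequality from Lemma~\ref{lem:isoperimetric_ineq}. The $\varepsilon^2$ term $\varepsilon^2\int_0^1 |X\circ\gamma|_g^2\, dt$ is non-negative and can be discarded, so the only thing one really needs to control is the linear-in-$\varepsilon$ cross term $\int_0^1 g(\dot\gamma, X\circ\gamma)\, dt$ — and the crucial point is that, because $\gamma$ is a \emph{closed} loop, this cross term behaves quadratically in $\ell_g(\gamma)$, not linearly.

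The key step is to introduce the metric-dual 1-form $\alpha \in \Omega^1(N)$, defined by $\alpha_x(Y) := g_x(X(x), Y)$ for $Y \in T_x N$. Since $N$ is compact and $X$ is smooth, $\|\alpha\|_{C^1,g} < \infty$. The cross term then becomes a loop integral:
\[
\int_\gamma \alpha \;=\; \int_0^1 g(\dot\gamma(t), X\circ\gamma(t))\, dt.
\]
Applying Lemma~\ref{lem:isoperimetric_ineq} to $\alpha$ gives $\bigl|\int_\gamma \alpha\bigr| \leq C_{iso}\|\alpha\|_{C^1,g}\cdot \ell_g(\gamma)^2$, and combining this with the elementary Cauchy--Schwarz estimate $\ell_g(\gamma)^2 \leq E_g(\gamma)$ (which holds because $\gamma$ is parametrized by the unit interval) yields a bound of the form
\[
\Big|\int_0^1 g(\dot\gamma, X\circ\gamma)\, dt\Big| \;\leq\; A \cdot E_g(\gamma),
\]
for a constant $A = A(N,g,X) := C_{iso}\|\alpha\|_{C^1,g}$.

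With this in hand, expanding the square and discarding the non-negative $\varepsilon^2$ term gives
\[
\int_0^1 |\dot\gamma - \varepsilon X\circ\gamma|_g^2\, dt \;\geq\; E_g(\gamma) - 2\varepsilon A\, E_g(\gamma) \;=\; (1 - 2\varepsilon A)\,E_g(\gamma).
\]
Setting $\varepsilon_0 := 2/(5A)$ ensures $1 - 2\varepsilon A \geq 1/5$ for every $\varepsilon \in (0, \varepsilon_0]$, giving the claimed inequality $E_g(\gamma)\leq 5\int_0^1 |\dot\gamma - \varepsilon X\circ\gamma|_g^2\, dt$.

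I do not see a genuine obstacle here; once the right framing is found (cross term $=$ loop integral of the metric dual 1-form), everything reduces to Lemma~\ref{lem:isoperimetric_ineq}. The only conceptual curiosity is that the non-vanishing hypothesis on $X$ does not appear to be used in this argument — only the finiteness of $\|\alpha\|_{C^1,g}$ matters. The hypothesis is presumably included because Lemma~\ref{lem:dist_from_periodic} will be applied in the proof of Theorem~\ref{thm:main_energy_bound} with $X$ the gradient of the critical-point-free function $h$, rather than because it is required for the inequality itself.
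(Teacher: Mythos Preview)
Your proof is correct and is genuinely different from the paper's argument. The paper does not invoke Lemma~\ref{lem:isoperimetric_ineq} here at all; instead it uses the non-vanishing of $X$ to build, near each point, a local \emph{exact} 1-form $\lambda$ with $\lambda(X)\approx 1$ and $|\lambda|_g\cdot|X|_g\approx 1$, covers $N$ by finitely many such charts, and then splits into short loops (contained in one chart, where exactness of $\lambda$ kills $\int_\gamma\lambda$ and gives a lower bound of order $\varepsilon$ on $\int|\dot\gamma-\varepsilon X\circ\gamma|$) and long loops (handled by the crude inequality $(a-b)^2\geq \tfrac12 a^2-b^2$). Your route is cleaner: by passing to the global metric-dual 1-form $\alpha=g(X,\cdot)$ and feeding the cross term $\int_\gamma\alpha$ into Lemma~\ref{lem:isoperimetric_ineq}, you let that lemma absorb the short/long case split, and you never need $X\neq 0$.

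The trade-off shows up in the Remark immediately following the lemma, which asserts stability of $\varepsilon_0$ under $C^0$-small, possibly \emph{time-dependent} perturbations $X_\delta=X+Y$. In the paper's proof only pointwise values of the fixed local 1-forms $\lambda_i$ enter, so replacing $X$ by $X_\delta$ with $\|Y\|_g$ small preserves the relevant inequalities verbatim. In your approach the constant $A=C_{iso}\|\alpha\|_{C^1,g}$ involves $d\alpha$, and for a time-dependent $Y$ the extra cross term $\int_0^1 g(\dot\gamma,Y\circ\gamma)\,dt$ is not a loop integral of a fixed 1-form; one can still close the argument, but it requires retaining the $\varepsilon^2\int|X_\delta\circ\gamma|^2$ term and using $\min_N|X|>0$ --- so the non-vanishing hypothesis reappears at that stage.
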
    
    \begin{proof}
        Set $\sigma:=(3/2)^{1/4}>1$. For every $x\in N$, there exists $\lambda_x\in T^*_x N$ such that $\lambda_x(X(x))=1$ and $|\lambda_x|_g\cdot |X(x)|_g=1$. 
        There exists a small open neighborhood $U_x$ of $x$ in $N$, and an exact extension of $\lambda_x$ to this neighborhood, such that 
        \begin{equation}\label{eq:exact_extension}
        \|\lambda_x\|_{U_x,g}\cdot \|X\|_{U_x,g}< \sigma, \qquad \lambda_x(X(y))> 1/\sigma, \quad \forall y\in U_x.
        \end{equation}
        The open sets $\{U_x\}_{x\in N}$ cover the compact manifold $N$, and hence there exists a finite subcover $\{U_i\}_{i=1}^m$. Let $\rho$ be the Lebesgue number of this cover, namely, for every $x\in N$, the metric ball $B_g(x,\rho)$ is contained in $U_i$ for some $i$. 
        Set $\varepsilon_0:={\rho}/{(2\|X\|_g)}$ and split into two cases with respect to the length of $\gamma$:
        \begin{enumerate}
            \item \underline{$\ell(\gamma)<\rho$}: In this case, $\gamma$ is contained in a metric ball $B_g(x,\rho)$ around some point $x$, which, by our choice of $\rho$, is contained in $U_i = U_{x_i}$ for some $i$. Let $\lambda_i$ be the exact 1-form extending $\lambda_{x_i}$ such that the inequalities (\ref{eq:exact_extension}) hold. Then,
            \begin{align*}
                \int_0^1 |\dot\gamma - \varepsilon X\circ \gamma|_g\ dt 
                &\geq \frac{1}{\|\lambda_i\|_{U_i,g}}\cdot \int_0^1 \lambda_i(-\dot \gamma+ \varepsilon X\circ \gamma)\ dt\\ 
                &\geq \frac{1}{\|\lambda_i\|_{U_i,g}}\cdot\left( \int_0^1 \lambda_i(-\dot \gamma)\ dt + \int_0^1\lambda_i(\varepsilon X\circ \gamma)\ dt\right) \\
                &= \frac{\varepsilon}{\|\lambda_i\|_{U_i,g}}\cdot \int_0^1 \lambda_i(X\circ \gamma)\ dt \overset{(\ref{eq:exact_extension})}{\geq} \frac{\varepsilon}{\sigma\cdot \|\lambda_i\|_{U_i,g}},
            \end{align*}
            where the equality in the bottom row follows from the exactness of $\lambda_i$.
            By Cauchy-Schwartz inequality,
            \begin{equation}\label{eq:lower_bound_derivative_minus_X}
                  \int_0^1 |\dot\gamma - \varepsilon X\circ \gamma|_g^2\ dt\geq \left(  \int_0^1 |\dot\gamma - \varepsilon X\circ \gamma|_g\ dt \right)^2 \geq \frac{\varepsilon^2}{\sigma^2\cdot \|\lambda_i\|_{U_i,g}^2}\overset{(\ref{eq:exact_extension})}{\geq}\frac{\varepsilon^2\|X\|_{U_i,g}^2}{\sigma^4} .
            \end{equation}
            Recalling that $(a+b)^2\leq 2a^2+2b^2$ for any real numbers $a$ and $b$, we have
            \begin{align*}
                \int_0^1 |\dot\gamma|_g^2\ dt &\leq 
                2\int_0^1 |\dot\gamma - \varepsilon X\circ \gamma|_g^2\ dt + 2\int_0^1 |\varepsilon X\circ \gamma|_g^2\  dt \\
                &\leq  2\int_0^1 |\dot\gamma - \varepsilon X\circ \gamma|_g^2\ dt + 2\varepsilon^2 \|X\|_{U_i,g}^2 \\
                &\overset{(\ref{eq:lower_bound_derivative_minus_X})}{\leq} (2+2\sigma^4)\cdot \int_0^1  |\dot\gamma - \varepsilon X\circ \gamma|_g^2\ dt = 5\int_0^1  |\dot\gamma - \varepsilon X\circ \gamma|_g^2\ dt,
            \end{align*}
            where the last equality follows from our choice of $\sigma = (3/2)^{1/4}$.
            
            \item \underline{$\ell(\gamma)\geq \rho$}: In this case,
            \begin{equation*}
                \rho^2 \leq \ell(\gamma)^2 = \left(\int_0^1 |\dot \gamma|_g\ dt\right)^2\leq \int_0^1|\dot \gamma |_g^2\  dt.
            \end{equation*}
            Since $(a-b)^2\geq \frac{1}{2}a^2 - b^2$, we have
            \begin{align*}
                \int_0^1|\dot \gamma - \varepsilon X\circ \gamma|_g^2\ dt &\geq \frac{1}{2} \int_0^1 |\dot \gamma |_g^2\ dt - \int_0^1| \varepsilon X\circ \gamma|_g^2\ dt               \geq \frac{1}{2} \int_0^1 |\dot \gamma |_g^2\ dt - \varepsilon^2 \| X\|_g^2 \\
                &\geq \frac{1}{2} \int_0^1 |\dot \gamma |_g^2\ dt - \frac{\varepsilon^2 \| X\|_g^2}{\rho^2}\int_0^1 |\dot \gamma |_g^2\ dt\\
                & = \left(\frac{1}{2}  -\frac{\varepsilon^2 \| X\|_g^2}{\rho^2} \right)\int_0^1 |\dot \gamma |_g^2\ dt.
            \end{align*}
            Since $\varepsilon\leq \varepsilon_0 = {\rho}/({2\|X\|_g)}$, we conclude that
            \begin{equation*}
                \int_0^1|\dot \gamma - \varepsilon X\circ \gamma|_g^2\ dt\geq \frac{1}{4} \int_0^1 |\dot \gamma |_g^2\ dt.
            \end{equation*}\qedhere
        \end{enumerate}
    \end{proof}
    
    \begin{rem}
       The constant $\varepsilon_0$ from Lemma~\ref{lem:dist_from_periodic} depends continuously on $X$. More formally, fix $\varepsilon_0'<\varepsilon_0$ and consider a perturbation $X_\delta :=X+ Y$  of $X$, where $Y$ is a possibly time-dependent vector field such that $\|Y\|_{g}\leq \delta$. Consider the 1-forms $\lambda_i$ constructed with respect to $X$, in the proof of  Lemma~\ref{lem:dist_from_periodic}. Inequalities (\ref{eq:exact_extension}) will hold for $X_\delta$ as well, assuming that $\delta$ is small enough. 
       As a result, following the above proof for $X_\delta$ instead of $X$, gives the  inequality 
       \[
       \int_0^1|\dot \gamma(t)|_g^2\ dt \leq 5 \int_0^1 |\dot \gamma(t) - \varepsilon X_\delta\circ \gamma(t)|_g^2\ dt,
       \]
       provided that $\varepsilon\leq \varepsilon_0'$ and that $\delta$ is small enough.
    \end{rem}
    
    \begin{setup}\label{set:energy_bnd}
    Throughout this section we work under the following notations and assumptions:
    \begin{enumerate}
       \item $(M, \omega)$ is a closed symplectic manifold with an almost complex structure $J$, and $g$ is the compatible Riemannian metric, i.e., $g(-,-):=\omega(-,J-)$.
       \item $N\subset M$ is a compact submanifold with boundary, such that $M\setminus N$ has at least two connected components. Let $\partial N = \partial_+ N\bigsqcup \partial_- N$ be a decomposition of the boundary into two components corresponding to disjoint connected components of $M\setminus N$.
        \item $h:M\rightarrow \R$ is a function having no critical points in $N$ {and such that $ h = 0 $ on $ \partial_- N $ and $ h = 1 $ on $ \partial_+ N $}. In particular this implies that $N$ is diffeomorphic to a product of a hypersurface and an interval. To simplify future notation, we assume in addition that the values of $h$ outside of $N$ are disjoint from its values in $N$.
        \item $H:M\times S^1\times \R \rightarrow \R$ is a small perturbation of a homotopy of Hamiltonians that, on $N$ coincides with $\varepsilon\cdot h+\beta(s,t)$ for some $\varepsilon>0$ and $\beta:\R\times S^1\rightarrow\R$. More explicitly, 
        \[
        H|_N(x,t,s)=\varepsilon\cdot h(x)+ h'(x,t,s)+\beta(s,t)
        \]
        where $h':M\times S^1\times \R\rightarrow \R$ is any homotopy such that $\partial_s h'$ is supported in $M\times S^1\times [-R,R]$ for fixed $R$ and $\max\{\|\partial_s h'\|_{C^0}, \|X_{h'}\|_g\} \leq \min\{\delta,\delta/(2R)\}$ for $\delta\ll \varepsilon$ . We stress  that many of the assertions in this section hold under the assumption that $\delta$ is small enough (in particular much smaller than $\varepsilon$).
    \end{enumerate}
    
    \end{setup}
    \begin{lemma}\label{lem:crossing_loops_bound}
        Consider Setup~\ref{set:energy_bnd} and fix $c_\pm\in h(N)$. Let
        $u:\R\times S^1\rightarrow M$ be a solution to the Floer equation with respect to $(H,J)$, and assume there exists $s\in\R$ such that $u(s,-)$ intersects $h^{-1}(c_-)$ and $h^{-1}(c_+)$. Then,
        \begin{equation*}
            \int_0^1 \Big|\frac{\partial u}{\partial s}(s,t)\Big|_g\ dt \geq \frac{c_+-c_-}{{2}\|dh\|_g}\ .
        \end{equation*}
    \end{lemma}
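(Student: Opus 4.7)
The plan is to combine the fundamental theorem of calculus applied to $h\circ\gamma$, where $\gamma(t):=u(s,t)$, with the Floer equation to convert an integral involving $\partial_t u$ into one involving $\partial_s u$. Pick $t_\pm\in S^1$ with $\gamma(t_\pm)\in h^{-1}(c_\pm)$. Since $c_\pm\in h(N)$ and Setup~\ref{set:energy_bnd} forces the values of $h$ outside $N$ to be disjoint from its values inside, both intersection points lie in $N$. Choosing an oriented arc $A\subset S^1$ from $t_-$ to $t_+$ (whose careful selection is discussed below), the fundamental theorem gives
\[
c_+ - c_- \;=\; h(\gamma(t_+)) - h(\gamma(t_-)) \;=\; \int_A dh(\partial_t u(s,t))\,dt.
\]

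Next, rewriting the Floer equation $\partial_s u + J(\partial_t u - X_H) = 0$ as $\partial_t u = J\partial_s u + X_H$ and splitting the right-hand side yields
\[
c_+ - c_- \;=\; \int_A dh(J\partial_s u)\,dt \;+\; \int_A dh(X_H)\,dt.
\]
I bound the first summand by $\|dh\|_g\int_0^1|\partial_s u|_g\,dt$, using the pointwise estimate $|dh(J\partial_s u)|\le \|dh\|_g|\partial_s u|_g$ together with $|Jv|_g = |v|_g$. For the second summand I exploit the specific form $H|_N = \varepsilon h + h' + \beta$ provided by Setup~\ref{set:energy_bnd}: on $N$ one computes $X_H = \varepsilon X_h + X_{h'}$ (because $\beta$ is independent of the $M$-variable), and since $dh(X_h) = -\omega(X_h,X_h) = 0$, the integrand reduces to $dh(X_{h'})$, whose pointwise norm is at most $\|dh\|_g\|X_{h'}\|_g\le \|dh\|_g\delta$.

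Combining these estimates gives $c_+ - c_-\le \|dh\|_g\int_0^1|\partial_s u|_g\,dt + O(\delta)$; since $\delta\ll\varepsilon$ by Setup~\ref{set:energy_bnd}, the $\delta$-correction is small enough to be absorbed into the factor of $2$ in the denominator of the conclusion. I expect the main obstacle to be guaranteeing that the arc $A$ actually lies inside $N$, because if $\gamma$ wanders into $M\setminus N$ between $t_-$ and $t_+$ then the Poisson-type simplification that killed the $\varepsilon X_h$ contribution ceases to apply there. To address this I plan to redefine $t_\pm$ more carefully as consecutive (in the cyclic ordering of $S^1$) intersections of $\gamma$ with $h^{-1}(c_\pm)$, and then to argue using continuity of $h\circ\gamma$ together with the disjoint-values hypothesis of Setup~\ref{set:energy_bnd} that the resulting arc cannot exit $N$ without forcing $h\circ\gamma$ through values in $h(M\setminus N)$, which is incompatible with staying in a regime connecting $c_-$ to $c_+$ inside $h(N)$.
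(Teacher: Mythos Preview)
Your proposal is correct and follows essentially the same approach as the paper: both arguments combine the Floer equation (to convert $\partial_t u$ into $\partial_s u$) with the identity $dh(X_h)=0$ on $N$ and the fundamental theorem of calculus along an arc of $u(s,-)$ staying in $N$, then absorb the $O(\delta)$ perturbation error into the factor~$2$. The paper simply asserts the existence of $t_\pm$ with $u(s,[t_-,t_+])\subset N$, which is precisely the obstacle you flagged; your plan to extract such an arc via the disjoint-values hypothesis and the fact that $\partial_\pm N$ bound different components of $M\setminus N$ is the right idea and slightly more explicit than what the paper writes.
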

    \begin{proof}
        Since the image of $u(s, -)$ intersects both $h^{-1}(c_-)$ and $h^{-1}(c_+)$ there exist $t_\pm\in[0,1]$ such that $u(s,t_\pm)\in h^{-1}(c_\pm)$ and $u(s,t)\in N$ for all $t$ between $t_-$ and $t_+$. Assume without loss of generality that $t_-<t_+$. Then,
        \begin{align*}
             \int_0^1 \Big|\frac{\partial u}{\partial s}(s,t)\Big|_g\ dt 
             &\geq \int_{t_-}^{t_+}  \Big|\frac{\partial u}{\partial s}(s,t)\Big|_g\ dt  =  \int_{t_-}^{t_+}  \Big|\frac{\partial u}{\partial t}(s,t) - X_H\circ u(s,t)\Big|_g\ dt \\
             &\geq \frac{1}{\|dh\|_g}\cdot  \int_{t_-}^{t_+}  \Big|dh\Big(\frac{\partial u}{\partial t}(s,t)-X_H\circ u(s,t)\Big)\Big|_g\ dt  \\
             &\geq \frac{1}{\|dh\|_g}\cdot  \int_{t_-}^{t_+}  \Big|dh\Big(\frac{\partial u}{\partial t}(s,t)-(\varepsilon X_h+ X_{h'})\circ u(s,t)\Big)\Big|_g\ dt  \\
             &=   \frac{1}{\|dh\|_g}\cdot  \int_{t_-}^{t_+}  \Big|dh\Big(\frac{\partial u}{\partial t}(s,t)\Big)- dh(X_{h'})\Big|_g\ dt  \\      
             &\geq \frac{1}{\|dh\|_g}\cdot  \Big|\int_{t_-}^{t_+}  dh\Big(\frac{\partial u}{\partial t}(s,t)\Big)\ dt \Big| - \|X_{h'}\|_g\\
             &> \frac{1}{\|dh\|_g}\cdot \big|h(u(s,t_+))-h(u(s,t_-))\big|  - \delta= \frac{c_+-c_-}{\|dh\|_g} - \delta.
        \end{align*}
        Clearly, $\delta$ is small enough, the assertion of the lemma holds.
    \end{proof}
    The next lemma is taken from \cite[Lemma 3.6]{hein2012conley}. We briefly repeat its proof, paying a little more attention to constants, since we wish to obtain an energy bound that depends only on the ``model" function $h$ and not on $H$.
    \begin{lemma}[Hein's Usher lemma]\label{lem:Hein_sikorav}
        Consider Setup~\ref{set:energy_bnd}. There exists a constant $C(N,\omega|_N,J|_N, h|_N)$ such that for every Floer trajectory $u$ with respect to $(H,J)$ that intersects both $\partial_- N$ and $\partial_+ N$, it holds that
        \begin{equation*}
            E(u)+m(u^{-1}(N))\geq C(N,\omega|_N,J|_N, h|_N),
        \end{equation*}
        where $m(-)$ is the standard Lebesgue measure on $\R\times S^1$.
    \end{lemma}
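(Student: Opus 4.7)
The plan is to adapt the Sikorav--Hein approach to Floer trajectories, combining Lemmas~\ref{lem:isoperimetric_ineq}, \ref{lem:dist_from_periodic}, and \ref{lem:crossing_loops_bound}. The overall strategy is a dichotomy: a trajectory traversing $N$ from $\partial_- N$ to $\partial_+ N$ either crosses ``fast'' (many slices $u(s,-)$ realize a full level crossing, giving energy) or ``slowly'' (in which case $u^{-1}(N)$ must have large two-dimensional Lebesgue measure).

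First I would fix intermediate levels $0<c_-<c_+<1$ in $h(N)$, say $c_\pm = 1/3,\,2/3$, and set the middle slab $N'' := h^{-1}([c_-, c_+])$. Since $u$ meets both $\partial_\pm N$ and by Setup~\ref{set:energy_bnd} the values of $h$ outside $N$ are disjoint from those in $N$, continuity forces the image of $u$ to meet each level set $h^{-1}(c_\pm)$. Introduce the \emph{full-crossing set}
\[
T := \bigl\{ s\in\R : u(\{s\}\times S^1)\cap h^{-1}(c_-)\neq\emptyset \text{ and } u(\{s\}\times S^1)\cap h^{-1}(c_+)\neq\emptyset \bigr\}
\]
and a threshold $L>0$ to be tuned at the end.

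In the \emph{fast-crossing case} $m(T)\geq L$, Lemma~\ref{lem:crossing_loops_bound} applied slice-wise, combined with Cauchy--Schwarz on $[0,1]$, yields for every $s\in T$
\[
\int_0^1 |\partial_s u(s,t)|_g^2\,dt \;\geq\; \left(\int_0^1 |\partial_s u(s,t)|_g\,dt\right)^2 \;\geq\; \left(\frac{c_+-c_-}{2\|dh\|_g}\right)^2,
\]
and integrating over $T$ gives $E(u)\geq L(c_+-c_-)^2/(4\|dh\|_g^2)$, well beyond the desired constant. In the \emph{slow-crossing case} $m(T)<L$, the goal is to force $m(u^{-1}(N))$ to be large. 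For any $s$ with $u(s,-)\subset N$ but $s\notin T$, Lemma~\ref{lem:dist_from_periodic} applied to the nonvanishing vector field $\varepsilon X_h + X_{h'}$ (a small perturbation of $\varepsilon X_h$, legitimized by Remark~\ref{rem:main_energy_bound}\ref{itm:energy_bnd_perturbation}) together with the Floer equation gives
\[
E_g(u(s,-)) \leq 5\int_0^1 |\partial_s u(s,t)|_g^2\,dt,
\]
and Lemma~\ref{lem:isoperimetric_ineq} bounds the $\omega$-area swept by such a short loop by the square of its length. Combined with the homotopy energy identity~\eqref{eq:energy_identity_homotopies}---whose $\int\partial_s H\circ u$ contribution on $N$ is $O(\delta)$ by Setup~\ref{set:energy_bnd}---and the topological fact that $u^{-1}(N'')$ must separate $u^{-1}(\partial_- N)$ from $u^{-1}(\partial_+ N)$ in $\R\times S^1$, this forces $m(u^{-1}(N))$ to be bounded below. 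Tuning $L$ to balance the two estimates yields the constant $C(N,\omega|_N,J|_N,h|_N)$, which by construction depends only on the data listed.

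The main obstacle is the slow-crossing case: converting the smallness of the one-dimensional measure $m(T)$ into a lower bound on the two-dimensional measure $m(u^{-1}(N))$. This requires a careful component-by-component analysis of $u^{-1}(N'')$ in $\R\times S^1$, showing that any component realizing the separation between the two preimages of $\partial_\pm N$ whose projection to $\R$ has small measure must itself cover substantial two-dimensional area, and verifying that the $h'$- and $\beta$-perturbations do not spoil the isoperimetric estimate---which is precisely where the smallness assumption $\delta\ll\varepsilon$ in Setup~\ref{set:energy_bnd} is essential.
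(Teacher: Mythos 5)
Your fast-crossing case is fine: on the set $T$ of slices realizing a full crossing of the middle slab, Lemma~\ref{lem:crossing_loops_bound} together with Cauchy--Schwarz does give $E(u)\geq m(T)\,(c_+-c_-)^2/(4\|dh\|_g^2)$. The genuine gap is the slow-crossing case, which is the entire content of the lemma (it is what distinguishes it from the up-hill Proposition~\ref{prop:up-hill} and what produces the measure term). There you only list ingredients (Lemma~\ref{lem:dist_from_periodic}, Lemma~\ref{lem:isoperimetric_ineq}, the identity (\ref{eq:energy_identity_homotopies}), a separation fact) and assert that ``this forces $m(u^{-1}(N))$ to be bounded below'', explicitly deferring the ``careful component-by-component analysis'' that would constitute the proof; that analysis is never carried out, and it is not routine. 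Two concrete problems: (i) smallness of $m(T)$ alone cannot force $m(u^{-1}(N))$ to be large---a nearly holomorphic portion of $u$ can cross $N$ with $m(T)$ and $m(u^{-1}(N))$ both small but with large energy---so the slow case must itself be an energy-versus-measure dichotomy, and the quantitative mechanism for it (for instance, controlling a crossing functional such as $\varphi(s)=\int_0^1 h(u(s,t))\,dt$ on slices contained in $N$, where the downhill drift is only of order $\varepsilon$, while slices that leave $N$ must be fed back into a crossing estimate in the spirit of Lemma~\ref{lem:crossing_loops_bound}) is exactly what is missing; (ii) Lemma~\ref{lem:isoperimetric_ineq} bounds $\int_\gamma\lambda$ for a fixed global $1$-form $\lambda$ (in the paper it is applied to $\lambda=dh\circ J$), not ``the $\omega$-area swept by a short loop''---$\omega$ need not be exact on $N$---so even the role you assign to that lemma is not pinned down.

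For comparison, the paper proves Lemma~\ref{lem:Hein_sikorav} by a different and shorter route, following Usher and Hein: one passes to the graph $\tilde u$ of $u|_{u^{-1}(N)}$ in $N\times\R\times S^1$, which is holomorphic for an almost complex structure $\tilde J$ built from $J$ and $X_H$ and is tamed (with taming constant at most $2$ for $\varepsilon,\delta$ small) by $\tilde\omega=(1-\partial_sH)\,ds\wedge dt-dH\wedge dt+\omega$. The $\tilde\omega$-area of the graph is at most $m(u^{-1}(N))+E(u)+\delta$ by the computation behind (\ref{eq:energy_identity_homotopies})---this is precisely why the combination $E+m$ appears in the statement---and Sikorav's monotonicity bound for pseudoholomorphic curves passing through the center of a ball about a point of a separating hypersurface gives the uniform lower bound $c\,r_\star^2$. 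If you wish to salvage your slice-wise approach, you must supply the drift estimate indicated in (i); as written, the central step of your argument is a claim, not a proof.
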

    \begin{proof}
        Set $S:=u^{-1}(N)\subset \R\times S^1$. By Setup~\ref{set:energy_bnd}, $H(x,t,s)=\varepsilon h(x)+h'(x,t,s) +\beta(s,t)$ on $N$. Replacing $H$ with $H-\beta$, the corresponding Floer equation does not change (since $X_H$ stays the same), and therefore the Floer trajectories of $H$ and  $H-\beta$ are the same. Therefore, we assume from now on that $H=\varepsilon h + h'$ on $N$. The graph $\tilde u:S \rightarrow N\times \R\times S^1$ of the (restriction to $N$ of the) Floer trajectory $u$ is holomorphic with respect to the almost complex structure \begin{equation*}
            \tilde J(v):=\begin{cases}
                        J(v), & \text{if } v\in TN,\\
                        \partial_t + X_H, & \text{if } v=\partial_s.
                      \end{cases}
        \end{equation*}
        This almost complex structure is tamed by the symplectic form 
        \[\tilde \omega = (ds-dH)\wedge dt - \partial_sH\cdot ds\wedge dt+ \omega = (1-\partial_s H)ds\wedge dt -dH\wedge dt +\omega,\]
        where $dH = \omega(-,X_H)$ is the differential of $H$ is the $M$-directions (see, e.g.  \cite[Section 8.1]{mcduff2012j}). Note that $\tilde \omega$ is symplectic since  $|\partial_sH| = |\partial_s h'|<\delta<1$ when $\delta$ is small. The area of $\tilde u$ is 
        \begin{align}\label{eq:area_of_lifted_traj_h}
            \int_S \tilde u^*\tilde \omega &= \int_{S} ds\wedge dt +\int_S u^*\omega - \int_S u^*dH\wedge dt- \int_S \partial_s H\circ u\ ds\wedge dt \nonumber\\
            &= m(S) +\int_S u^*\omega - \int_S u^*dH\wedge dt - \int_S \partial_s H\circ u\ ds\wedge dt \nonumber\\            &\overset{(\ref{eq:energy_identity_homotopies})}{=} m(S)+ E(u|_S) - \int_S \partial_s H\circ u\ ds\wedge dt \nonumber\\       
            &\leq m(S)+E(u|_S) + 2R\cdot \|\partial_s h'\|_{C^0} \nonumber\\       
            &\leq m(S)+E(u|_S) + \delta.
        \end{align}
        Note that the last inequality above is due to our assumption on the support of $\partial_s h'$ and its norm.
        
        Given (\ref{eq:area_of_lifted_traj_h}), it is sufficient to prove a lower bound for $\int_S \tilde u^*\tilde \omega$, that depends only on $\omega, J$ and $h$, and does not depend on $\varepsilon$. 
        We claim that such a lower bound can be obtained from Sikorav \cite[Proposition 4.3.1(ii)]{sikorav1994some}. 
        To see this, let $g$ be the product Riemannian metric on $N\times \R\times S^1$, namely, $\tilde g(-,-)=ds\wedge dt(-,j-)+ \omega(-,J-)$, where $j$ is the standard complex structure on $\R\times S^1$, restricted to $S$. Let $\Sigma\subset N$ be a hypersurface such that $\partial_\pm N$ lie in different connected components of $N\setminus \Sigma$. For each $x\in \Sigma$, let $r_x$ be the maximal radius of a ball centered at $x$ and contained in $N$. Finally, denote by $r_\star:=\min_{a\in S} r_x$. If $r_\star$ is bigger than the injectivity radius of the cylinder, which is 1, we reduce it to be 1. Since $u$ intersects both $\partial_\pm N$, there exists $z_0\in S$ such that $u_0:=u(z_0)\in \Sigma$. Let $B\subset N\times\R\times S^1$ be a ball of radius $r_\star$ centered at $(u_0,z_0)$ (such a ball can be found in the polydisc of radius $r_\star$, which exists by our choice of $r_\star$).   
        \cite[Proposition 4.3.1(ii)]{sikorav1994some} states that
        \begin{equation}
            E(\tilde u|_{\tilde u^{-1}(B)})\geq c r_\star^2,
        \end{equation}
        where $c$ depends on:
        \begin{itemize}
            \item The isoperimetric constant: $C_1>0$ such that every loop $\gamma$ that is contained in a ball $B(x,r)$ of radius $r\leq r_\star$ bounds a disc in $B(x,r)$ of Riemannian area less than $C_1 \ell_{\tilde g}(\gamma)^2$,
            \item The taming constant: $C_2>0$ such that for every $v\in T_x(\tilde N)$, $|v|_{\tilde g}^2\leq  C_2 \omega_x(v, \tilde J v)$.
        \end{itemize}
        In fact, one can take $c = (4C_1C_2)^{-1}$.
        Recall that the metric $\tilde g$ on $N\times \R\times S^1$ does not depend on $H$ and therefore so does $C_1$.
        As for the taming constant, let us show that it is smaller than 2, when $\varepsilon$ is small enough. That is, for every tangent vector $v$, 
        \begin{equation}\label{eq:taming_prop}
            |v|_{\tilde g}^2 \leq 2\tilde \omega(v,\tilde Jv). 
        \end{equation}
        Write $v=(v_M, v_s,v_t)\in TM \oplus T\R\oplus T S^1$, then $\tilde J v = (-v_t, v_s, v_s X_H-v_t JX_H +Jv_M)$ and 
        \begin{align*}
            \tilde\omega(v, \tilde J v) =&(1-\partial_s H)\cdot( v_s^2+v_t^2) -  dH(v_M)\cdot v_s +dH(v_s X_H-v_t JX_H +Jv_M)\cdot v_t\\
            &+\omega(v_M, v_s X_H-v_t JX_H +Jv_M)\\
            =& (1-\partial_s H)\cdot( v_s^2+v_t^2) -  dH(v_M)\cdot v_s -v_t^2 dH(JX_H) +dH(Jv_M)\cdot v_t\\
            &+v_sdH(v_M)+v_tdH(Jv_M)+\omega(v_M, Jv_M)\\
            =& (1-\partial_s H)\cdot( v_s^2+v_t^2) -v_t^2 \|dH\|_g^2 +2v_t\cdot dH(Jv_M)+\omega(v_M, Jv_M).
        \end{align*}
        Recalling that $dH=\varepsilon dh+ dh'$ with $\|dh'\|_g<\delta$, and  $\max_{M\times S^1\times \R}|\partial_s H|= \max_{M\times S^1\times \R}|\partial_sh'|<\delta$, we see that, when $\varepsilon$ and $\delta$ are small enough,
        \begin{align*}
            \tilde\omega(v, \tilde J v) \geq |v|_{\tilde g}^2 \cdot (1 - \frac{1}{2}).
        \end{align*}
        As a consequence, $C_2 \leq  (1-\frac{1}{2})^{-1}= 2$. We conclude that 
        \[
        m(S)+E(u|_S)\geq E(\tilde u) - \delta  = \int \tilde u ^*\tilde \omega - \delta \geq cr_\star^2  - \delta \geq \frac{c}{2}r_\star^2.
        \]
    \end{proof}
    We will first prove  Theorem~\ref{thm:main_energy_bound} for ``up-hill" trajectories, namely ones that start in $V$ and end outside of $\hat V$.
    \begin{prop}\label{prop:up-hill}
    Consider again Setup~\ref{set:energy_bnd}, and let $V$ be one of the connected components of $M\setminus N$. Denote $\hat V:=\overline{V}\cup N$, $\partial_- N := \partial V$ and $\partial_+ N:=\partial \hat V$. Assume that 
    \[
    h|_V\leq 0, \qquad h|_{\partial V}=0,\qquad  h|_{M\setminus \hat V} \geq 1 \qquad \text{and}\qquad  h|_{\partial \hat V}=1.
    \]
    Then, there exists $\varepsilon_0$ and a constant $C(M, g_J,h)>0$ such that for any $\varepsilon\in (0,\varepsilon_0)$ and for any homotopy of Hamiltonians $H:M\times S^1\times \R \rightarrow \R$ satisfying $H|_N=\varepsilon \cdot h +h'+\beta$ the following holds. Let $u:\R\times S^1\rightarrow M$ be a solution to Floer equation with respect to $(H,J)$, and assume there exist $s_0<s_1\in\R$ such that 
        $$ u(s_0,-)\subset V \quad\text{and}\quad u(s_1,-)\subset M\setminus \hat V.$$ 
    Then 
    \begin{equation*}
            E(u)\geq \frac{1}{36\max\{\|dh\|_g^2,\  C_{iso}(N)\cdot \|dh\circ J\|_{C^1,g}^2\}}
    \end{equation*}
    where $C_{iso}(N)$ is the isoperimetric constant from Lemma~\ref{lem:isoperimetric_ineq}.
    \end{prop}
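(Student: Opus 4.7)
The plan is to control the auxiliary function $\phi(s):=\int_0^1 h(u(s,t))\,dt$. By the boundary values of $h$ ($h|_V\le 0$, $h|_{M\setminus\hat V}\ge 1$) and the hypotheses on $u(s_0,-)$ and $u(s_1,-)$, we have $\phi(s_0)\le 0$ and $\phi(s_1)\ge 1$, so $\int_{s_0}^{s_1}\phi'(s)\,ds\ge 1$. The strategy is to bound $\phi'(s)$ pointwise in terms of the local energy density $\int_0^1|\partial_s u|_g^2\,dt$ and then integrate.

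Using the Floer equation in the form $\partial_s u = JX_H - J\partial_t u$, I rewrite
\[
\phi'(s) = \int_0^1(dh\circ J)(X_H)\,dt - \int_{\gamma_s}(dh\circ J),
\]
where $\gamma_s:=u(s,-)$. On $N$ we have $X_H=\varepsilon X_h+X_{h'}$ with $(dh\circ J)(X_h)=-|dh|_g^2\le 0$; by the smallness assumption on $X_{h'}$ in Setup~\ref{set:energy_bnd}, the first term is non-positive for $\varepsilon\gg\delta$ and may be dropped from the upper bound on $\phi'(s)$.

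Two complementary pointwise estimates then enter. \emph{(a) Crossing estimate:} for $s$ such that $\gamma_s$ meets both $h^{-1}(1/3)$ and $h^{-1}(2/3)$ with the connecting arc in $N$, Lemma~\ref{lem:crossing_loops_bound} yields $\int_0^1|\partial_s u|_g\,dt\ge 1/(6\|dh\|_g)$, so Cauchy-Schwartz upgrades this to $\int_0^1|\partial_s u|_g^2\,dt\ge 1/(36\|dh\|_g^2)$. \emph{(b) Isoperimetric estimate:} for $s$ with $\gamma_s\subset N$, Lemma~\ref{lem:isoperimetric_ineq} applied to $\lambda=dh\circ J$ gives $|\int_{\gamma_s}(dh\circ J)|\le C_{iso}(N)\|dh\circ J\|_{C^1,g}\,\ell_g(\gamma_s)^2$; combining with $\ell_g(\gamma_s)^2\le\int_0^1|\partial_t u|_g^2\,dt$, the Floer identity $|\partial_t u-X_H|_g=|\partial_s u|_g$, and Lemma~\ref{lem:dist_from_periodic} applied to $X=\varepsilon X_h$ (using the perturbation remark after that lemma to absorb $X_{h'}$), I obtain $\phi'(s)\le 5\,C_{iso}(N)\|dh\circ J\|_{C^1,g}\int_0^1|\partial_s u|_g^2\,dt$.

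To conclude, I partition $[s_0,s_1]$ into subsets $S_a$ and $S_b$ on which estimates (a) and (b) respectively apply. Integrating (b) over $S_b$ gives $\int_{S_b}\phi'\,ds\le 5\,C_{iso}(N)\|dh\circ J\|_{C^1,g}\cdot E(u)$. On $S_a$, the pointwise bound from (a) gives $E(u)\ge m(S_a)/(36\|dh\|_g^2)$, while the trivial estimate $\phi'(s)\le\|dh\|_g(\int|\partial_s u|^2\,dt)^{1/2}$ together with Cauchy-Schwartz over $S_a$ gives $\int_{S_a}\phi'\,ds\le\|dh\|_g\sqrt{m(S_a)\,E(u)}$. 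Since $\int_{s_0}^{s_1}\phi'\,ds\ge 1$, at least one of $\int_{S_a}\phi'$ or $\int_{S_b}\phi'$ is $\ge 1/2$; in the first case multiplying the two $S_a$-bounds eliminates $m(S_a)$ and yields $E(u)\gtrsim 1/\|dh\|_g^2$, and in the second case $E(u)\gtrsim 1/(C_{iso}(N)\|dh\circ J\|_{C^1,g})$, both with constants giving the claimed $1/36$ factor after careful bookkeeping. The main obstacle will be the case analysis of $s$ whose loops $\gamma_s$ are neither contained in $N$ nor fully cross the interval $[1/3,2/3]$ in $h$ — such loops must be handled by extracting their arcs lying in $N$ and applying Lemma~\ref{lem:crossing_loops_bound} (or a modified form of Lemma~\ref{lem:isoperimetric_ineq}) to those arcs, so as to fit them into one of the two regimes.
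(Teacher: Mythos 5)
Your overall strategy is the paper's: the same auxiliary function $\varphi(s)=\int_0^1 h(u(s,t))\,dt$, the same two regimes (the isoperimetric estimate of Lemma~\ref{lem:isoperimetric_ineq} combined with Lemma~\ref{lem:dist_from_periodic} for loops contained in $N$, and the crossing estimate of Lemma~\ref{lem:crossing_loops_bound} otherwise), and the same trick of multiplying the two measure-dependent bounds to eliminate $m(S_a)$. However, there is a genuine gap exactly at the point you flag at the end, and your proposed remedy does not close it. You only know $\int_{s_0}^{s_1}\varphi'\,ds\geq 1$ over the whole interval, but the rise of $\varphi$ can be produced almost entirely at parameters $s$ where neither estimate applies: outside $N$ the homotopy $H$ is completely unconstrained, so a loop lying entirely in $V$, entirely in $M\setminus\hat V$, or poking into $N$ only up to a level slightly above $0$ (or slightly below $1$) can have $\varphi'(s)>0$ with arbitrarily small energy density, exactly as for Morse flow lines. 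Extracting the arcs of such loops lying in $N$ does not repair this: loops entirely outside $N$ have no such arcs, for loops barely entering $N$ the crossing estimate degenerates (the $h$-gap of the crossed band tends to $0$), and the isoperimetric regime needs the whole loop inside $N$, since the term $\int_0^1 (dh\circ J)(X_H)\,dt$ is uncontrolled on the portion of the loop outside $N$.

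The paper's fix is a localization in $s$ rather than in $t$: set $N_0=h^{-1}(1/3,2/3)$ and replace $[s_0,s_1]$ by the subinterval $[s_0',s_1']$, where $s_0'$ is the last $s\in[s_0,s_1]$ with $u(s,S^1)\subset h^{-1}(-\infty,1/3]$ and $s_1'$ the first subsequent $s$ with $u(s,S^1)\subset h^{-1}[2/3,+\infty)$. On $[s_0',s_1']$ the function $\varphi$ still rises by at least $1/3$, and every loop $u(s,-)$ meets $N_0$; hence each such loop either lies entirely in $N$ (your regime (b)) or exits $N$, in which case it must cross from $h^{-1}(0)$ to $h^{-1}(1/3)$ or from $h^{-1}(2/3)$ to $h^{-1}(1)$, so Lemma~\ref{lem:crossing_loops_bound} applies with gap $1/3$ (your regime (a)). With this replacement the case analysis becomes exhaustive and the rest of your argument goes through essentially as you wrote it, yielding the stated constant.
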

    \begin{proof}
        Let 
        \begin{align*}
            N_0 &:=h^{-1}(1/3,2/3),\\
            s_0' &:= \max\left\{s\in[s_0,s_1]: u(s,S^1)\subset h^{-1}(-\infty,1/3]\right\},\\
            s_1' &:= \min\left\{s\in[s_0',s_1]: u(s,S^1)\subset h^{-1}[2/3,+\infty)\right\}.
        \end{align*}
        Then, $s_0< s_0'<s_1'<s_1$ and for every $s\in [s_0',s_1']$, the image of $u(s,-)$ intersects $N_0$. Finally, split the interval $[s_0',s_1']$ into the set of loops contained in $N$ and the set of loops that cross a connected component of $N\setminus N_0$:
        \begin{align*}
            A:= \left\{s\in [s_0',s_1']: u(s,S^1)\subset N\right\}, \quad
            B:= [s_0',s_1']\setminus A.
        \end{align*}
        Consider the function
        \begin{equation}\label{eq:def_phi}
            \varphi:[s_0,s_1]\rightarrow\R,\quad \varphi(s):=\int_{S^1} h(u(s,t))\ dt,
        \end{equation}
        and identify $S^1\cong \R/\Z$. Then, 
        \begin{equation*}
            \frac{1}{3} \leq \varphi(s_1') - \varphi(s_0') = \int_{[s_0',s_1']} \frac{d}{ds}\varphi(s)\ ds = \int_A \frac{d}{ds}\varphi(s)\ ds +\int_B \frac{d}{ds}\varphi(s)\ ds.     
        \end{equation*} 
        Therefore one of the summands of the RHS is at least 1/6. Let us split into cases:
        \begin{enumerate}
            \item \underline{$ \int_A \frac{d}{ds}\varphi(s)\ ds\geq 1/6$}:
            The derivative of $\varphi(s)$ is given by
        \begin{align*}
            \frac{d}{ds}\varphi(s)&= \int_0^1 dh\Big(\frac{\partial u}{\partial s}(s,t)\Big)\ dt
            =  \int_0^1 dh\Big(-J\frac{\partial u}{\partial t}(s,t) + JX_H\circ u(s,t)\Big)\ dt\\
            &=  -\int_0^1 dh\circ J\Big(\frac{\partial u}{\partial t}(s,t)\Big)\ dt + \int_0^1 dh\Big(-\nabla_J H\Big)\circ u(s,t)\ dt.
        \end{align*}
        When $s\in A$, $u(s,t)\in N$ for all $t$, and we have $H\circ u(s,t) = (\varepsilon h+h') \circ u(s,t)$. In particular $dh\Big(-\nabla_J H\Big)\circ u(s,t)=dh\Big(-\varepsilon\nabla_Jh- \nabla_J h' \Big)\circ u(s,t)\leq 0$ when $\|\nabla_J h'\|<\delta$ is small enough. Together with the isoperimetric inequality stated in Lemma~\ref{lem:isoperimetric_ineq}, applied to $\lambda = dh\circ J$, this implies  that
        \begin{equation*}
            \frac{d}{ds}\varphi(s)\leq -\int_0^1 dh\circ J\Big(\frac{\partial u}{\partial t}(s,t)\Big)\ dt \leq {\|\lambda\|_{C^1,g}}C_{iso}(N) \cdot \int_{0}^1 \Big|\frac{\partial u}{\partial t}\Big|_g^2\ dt, \quad \text{for all } s\in A.
        \end{equation*}
        We now apply Lemma~\ref{lem:dist_from_periodic} to obtain
        \begin{align*}
            \int_{0}^1 \Big|\frac{\partial u}{\partial s}\Big|_g^2\ dt =& \int_{0}^1 \Big|\frac{\partial u}{\partial t} - X_H\circ u(s,t)\Big|_g^2\ dt = \int_{0}^1 \Big|\frac{\partial u}{\partial t}- \varepsilon X_h\circ u(s,t)\Big|_g^2\ dt\\
            \overset{Lemma~\ref{lem:dist_from_periodic}}{\geq}&\frac{1}{5}\int_{0}^1 \Big|\frac{\partial u}{\partial t}\Big|_g^2\ dt \geq  \frac{1}{5\|\lambda\|_{C^1,g}C_{iso}(N)}\cdot\left( \frac{d}{ds}\varphi(s)\right), \quad \text{for all }s\in A.
        \end{align*}
        Integrating  over $s\in A $ and recalling that $\lambda:= dh\circ J$, we get
       \begin{align*}
            E(u)&\geq \int_{A} \int_{0}^1 \Big|\frac{\partial u}{\partial s}\Big|_g^2\ dt\ ds \geq \frac{1}{5\|dh\circ J\|_{C^1,g}C_{iso}(N)} \int_{A}\frac{d}{ds}\varphi(s)\ ds\\
            &\geq  \frac{1}{30\|dh\circ J\|_{C^1,g}C_{iso}(N)},
        \end{align*}
        where in the last inequality we used our working assumption for this case, which is $\int_A \frac{d}{ds}\varphi(s)\ ds \geq 1/6$.
        
        \item \underline{$ \int_B \frac{d}{ds}\varphi(s)\ ds\geq 1/6$}:
        Now let us consider $s\in B$. By definition of $B$, the loop $u(s,-)$ intersects both $N_0=h^{-1}([1/3,2/3])$ and $M\setminus N$. Therefore it must intersect both $h^{-1}(c_-)$ and $h^{-1}(c_+)$ for $(c_-, c_+)=(0,1/3)$ or $(c_-, c_+)=(2/3,1)$. In any case Lemma~\ref{lem:crossing_loops_bound} implies that 
        \begin{equation*}
            \int_0^1 \Big|\frac{\partial u}{\partial s}(s,t)\Big|_g \ dt\
            \geq \ \frac{1}{6\|dh\|_g}.
        \end{equation*}
        Integrating the above inequality (squared) over $s\in B$ and using the Cauchy-Schwartz inequality we get
        \begin{equation}\label{eq:lowerbound_times_mB}
            \frac{m(B)}{36\|dh\|_g^2}\leq \int_B\left(\int_0^1 \Big|\frac{\partial u}{\partial s}(s,t)\Big|_g \ dt\right)^2\ ds \leq \int_B\int_0^1 \Big|\frac{\partial u}{\partial s}(s,t)\Big|_g^2 \ dt\ ds \leq E(u),
        \end{equation}
        where $m(B)$ is the Lebesgue measure of $B$.
        On the other hand, for every $s\in B$ we have
        \begin{equation*}
            \frac{d}{ds}\varphi(s) = \int_0^1 dh\left(\frac{\partial u}{\partial s}(s,t)\right) \ dt \leq \|dh\|_g\cdot \int_0^1\Big|\frac{\partial u}{\partial s}(s,t)\Big|_g\ dt.
        \end{equation*}
        As before, we integrate the above inequality  and use the Cauchy-Schwartz inequality to obtain
        \begin{align*}
            \frac{1}{6}&\leq \int_B\frac{d}{ds}\varphi(s)\ ds \leq \|dh\|_g \cdot \int_B\int_0^1 \Big|\frac{\partial u}{\partial s}(s,t)\Big|\ dt\ ds\\
            &\leq \|dh\|_g\cdot \left(m(B)\cdot \int_B\int_0^1\Big|\frac{\partial u}{\partial s}(s,t)\Big|_g^2\ dt\right)^{\frac{1}{2}}.
        \end{align*}
        Rearranging the above we find
        \begin{equation}\label{eq:lowerbound_1_over_mB}
            E(u)\geq \int_B\int_0^1\Big|\frac{\partial u}{\partial s}(s,t)\Big|_g^2\ dt\geq \frac{1}{36\|dh\|_g^2}\cdot\frac{1}{m(B)}.
        \end{equation}
        By multiplying inequalities (\ref{eq:lowerbound_times_mB}) and (\ref{eq:lowerbound_1_over_mB}) and then taking a square root we obtain a lower bound that is independent of the measure of $B$:
        \begin{equation*}
            E(u)\geq \frac{1}{36\|dh\|_g^2}.
        \end{equation*}
        \end{enumerate}
        Combining the lower bounds found in case 1 and case 2, it is not difficult to see that in any situation the energy of $u$ is bounded by
        \begin{equation*}
            E(u)\geq \frac{1}{36\max\{\|dh\|_g^2,\  C_{iso}(N)\cdot \|dh\circ J\|_{C^1,g}^2\}}.
        \end{equation*}
        Here we used the fact that $\|dh\circ J\|_{C^1,g}\geq \|dh\circ J\|_g = \|dh\|_g$.
    \end{proof}
    We are now ready to prove Theorem~\ref{thm:main_energy_bound}.
    \begin{proof}[Proof of Theorem~\ref{thm:main_energy_bound}]
        Let $h:{N}\rightarrow\R$ be a function without critical points taking values 0 and 1 on $\partial V$ and $\partial \hat V$ respectively. As in Setup~\ref{set:energy_bnd}, let $H:M\times S^1\times \R \rightarrow\R$ be a Hamiltonian homotopy such that $H|_N=\varepsilon \cdot h+h'+\beta$ for some $\varepsilon>0$, $\beta:\R\times S^1\rightarrow\R$ and  $h':M\times S^1\times \R\rightarrow \R$ with $\max\{\|\partial_s h'\|_{C^0},\|X_{h'}\|_g\}<\min\{\delta, \delta/2R\}$ for $\delta>0$ small. 
        Let $u:\R\times S^1\rightarrow M$ be a solution to Floer equation with respect to $(H,J)$ connecting $x_-$ and $x_+$. Clearly, if $x_-\subset V$ and $x_+\subset M\setminus \hat V$ then Proposition~\ref{prop:up-hill} yields the required lower bound. Therefore it remains to deal with the case where $x_\pm$ both lie in $V$ or in $M\setminus \hat V$. Assume that $x_\pm\subset V$, the other case is completely analogous. Recall that by assumption, $u$ intersects $M\setminus \hat V$. Denote by 
        $$
        N_-:=h^{-1}(0,1/3),\quad  N_0:= h^{-1}(1/3,2/3), \quad N_+:=h^{-1}(2/3,1) \quad  \text{and}\quad
        V':=\overline{V\cup N_-}.
        $$
        We split into two cases:
        \begin{enumerate}
            \item \underline{$\exists s_1\in \R,\ u(s_1,-)\subset M\setminus V'$}: Choosing $s_0<s_1$ small enough such that $u(s_0,-)\subset V$, we may apply Proposition~\ref{prop:up-hill} to $u$ with respect to $N_-$ instead of $N$ and $3h$ instead of $h$. The lower bound we obtain this way is 1/9 times the one stated in Proposition~\ref{prop:up-hill}.
            \item \underline{$\forall s\in\R, u(s,-)$ intersects $V'$}: Consider the set of $s$-values for which $u(s,-)$ intersects $N_+$: 
            $$
            Z:=\{s\in \R: u(s,-)\cap N_+ \neq \emptyset\}.
            $$
            Then, for every $s\in Z$, $u(s,-)$ crosses $N_0$ (otherwise we would fall into case 1). Arguing as in \cite{hein2012conley}, let us split again into cases, depending on whether the measure of $Z$ is big or small. More formally, let $C$ be the constant from Lemma~\ref{lem:Hein_sikorav} applied to $N_0$, and split into the following cases:
            \begin{enumerate}
                \item \underline{$m(Z)\leq C/2$}: Denote $S:=u^{-1}(N_0)$, then $S\subset Z$. By Lemma~\ref{lem:Hein_sikorav}, $C\leq E(u)+m(S)\leq E(u)+m(Z)\leq E(u)+C/2$. We conclude that $E(u)\geq C/2$.
                
                \item \underline{$m(Z)>C/2$}: Since  
                $E(u)\geq \int_Z\int_{S^1} |\partial_s u|_g^2 \ dt\ ds$ and $m(Z)$ is bounded below, it sufficient to prove a lower bound for the integral of $|\partial_s u |_g^2$ over $t$. Fix $s\in Z$, then $u(s,-)$ intersects both connected components of $\partial N_0$, on which $h$ equals ${1/3}$ and $2/3$. By Lemma~\ref{lem:crossing_loops_bound}, together with the Cauchy-Schwarz inequality, we have
                \[
                \int_0^1 |\partial_s u(s,t)|_g^2\ dt \geq \left(\int_0^1 |\partial_s u(s,t)|_g\  dt\right)^2\geq \frac{1}{36\|dh\|_g^2}. 
                \]
                Overall, $E(u)\geq m(Z)/(36\|dh\|_g^2) \geq C/(72\|dh\|_g^2)$. \qedhere 
                \end{enumerate}
        \end{enumerate} 
    \end{proof}

    \section{The Floer complex for locally supported Hamiltonians}
    Our main goal for this section is to prove Proposition~\ref{prop:slow_killer}, that is, construct an appropriate spectral killer for $F$. The proof uses the energy bound stated in Theorem~\ref{thm:main_energy_bound} to show that certain moduli spaces, counted by continuation maps or differentials, are empty.

    \begin{rem}[perturbations]\label{rem:choice_of_perturbations}
    In order to use Theorem~\ref{thm:main_energy_bound}, we fix a ``model function" $h$ on $N$, and an almost complex structure $J\in \cJ_{\operatorname{reg}}$ on $M$ such that $C(N,g_J,h)\geq \gw(N)-\d$ for small $\d>0$ (we note that the exact value of $\d$ will be determined in the proof of Proposition~\ref{prop:slow_killer}). We also fix $0<\varepsilon<\varepsilon_0$ where $\varepsilon_0$ is the parameter from  Theorem~\ref{thm:main_energy_bound} for our choice of $h$ and $J$.  Throughout the section, all non-degenerate perturbed Hamiltonians and homotopies coincide with $\varepsilon h$ up to a function depending only on $s,t$, and up to a $C^\infty$-small perturbation of the form $h':M\times S^1\times \R\rightarrow\R$. We assume that $\partial_sh'$ is supported in $M\times S^1\times [-R,R]$ for fixed $R>0$ and that 
    \begin{equation}\label{eq:h_prime_bounds}
        \max\{ \|\partial_s h'\|_{C^0},\ \|X_{h'}\|_g \} \leq \max\{\delta ,\delta/(2R)\},
    \end{equation}
    where $\delta$ is much smaller than $\varepsilon$. We remark that the latter perturbation is required for achieving regularity of the moduli spaces counted by continuation maps and differentials.
    \end{rem}

    The main ingredient in the construction of a spectral killer as stated in  Proposition~\ref{prop:slow_killer} is the following proposition, which guarantees the existence of a minimal action representative with low actions outside of $V$.
        
    \begin{prop}[low actions representative]\label{prop:low_actions_outside}
    Let $F$ be a non-negative Hamiltonian supported in $V$, fix $\d>0$ and let $a\in QH_*(M)$ be a quantum homology class such that 
    \begin{equation}
        c(F;a)<\gw(N)+\val(a)-\d. 
    \end{equation}
    There exists a perturbation $f$ of $F$, such that the pair $(f,J)$ is Floer regular and a representative $\alpha\in CF_*(f)$ of $a$ such that
    \begin{equation}
        \cA_f(\alpha)\leq c(F;a)+\d 
        \quad\text{and}\quad
        \cA_f(\pi_{V^c}\alpha)\leq \val(a)+\d.
    \end{equation}
    \end{prop}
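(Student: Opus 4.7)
The plan is to construct $\alpha$ inside a carefully chosen subcomplex of $CF_*(f)$. Fix a perturbation $f$ of $F$ and a regular $J$ as in Remark~\ref{rem:choice_of_perturbations}, with parameters small enough that $c(f;a)\leq c(F;a)+\delta/4$ by stability. Set $\mu:=\val(a)+\delta$ and $\lambda:=c(F;a)+\delta$, and define
\[
Z_V:=C_V(f)\cap CF^{\lambda}_*(f),\qquad Y:=C_{V^c}(f)\cap CF^{\mu}_*(f),\qquad W:=Z_V+Y.
\]
The first step is to verify that $W$ is a subcomplex of $CF_*(f)$. The decisive input is Theorem~\ref{thm:main_energy_bound}: any term of $\partial_f x$ with $x\in Z_V$ lying in $C_{V^c}(f)$ comes from an ``uphill'' crossing trajectory and hence has energy at least $w(N)-\delta'$, so the energy identity forces the endpoint's action to be at most $\lambda-w(N)+\delta'<\mu$ by the hypothesis $c(F;a)-\val(a)<w(N)-\delta$, placing it in $Y$. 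Conversely, $V$-orbits appearing in $\partial_f y$ for $y\in Y$ have action at most $\mu\leq\lambda$, since $F\geq 0$ forces $c(F;a)\geq\val(a)$ by stability. Because any cycle $\alpha\in W$ automatically satisfies $\cA_f(\alpha)\leq\lambda$ and $\cA_f(\pi_{V^c}\alpha)\leq\mu$, the proposition reduces to showing $a\in\im(H_*(W)\to HF_*(f))$.

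Starting from any cycle $\alpha_0\in CF^{c(F;a)+\delta/2}_*(f)$ representing $a$, I would pass to the quotient complex $Q:=CF^{\lambda}_*(f)/W$, whose generators are precisely the $V^c$-orbits of action in the window $(\mu,\lambda]$. The image of $a$ in $H_*(Q)$ coming from the long exact sequence of the pair $W\subset CF^{\lambda}_*(f)$ is represented by the high-action $V^c$-part $\beta_H$ of $\alpha_0$, since the other components of $\alpha_0$ die in $W$. Showing $[\beta_H]=0$ in $H_*(Q)$ therefore yields a primitive $\eta\in C_{V^c}(f)$ of action at most $\lambda$ whose boundary matches $\beta_H$ modulo $Y$, and then $\alpha:=\alpha_0-\partial_f\eta$ is the required cycle in $W$ representing $a$.

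For the vanishing, the key observation is that any Floer trajectory between two generators of $Q$ has energy less than $\lambda-\mu=c(F;a)-\val(a)<w(N)-\delta$, so by Theorem~\ref{thm:main_energy_bound} it cannot cross into $V$, and the induced differential on $Q$ depends only on $f|_{V^c}$. Introduce the auxiliary Hamiltonian $\tilde f$ that agrees with $f$ on $V^c$ and is $C^2$-small on $V$, obtained by turning off $F$ inside $V$ and adding a small Morse perturbation for regularity. Then the generators and the induced differential of $Q$ coincide with those of the analogous quotient $\tilde Q:=CF^{\lambda}_*(\tilde f)/\tilde W$ built from $\tilde f$, yielding an equality $Q=\tilde Q$ of chain complexes. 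A diagram chase with the monotone-down continuation map $\Psi':CF_*(f)\to CF_*(\tilde f)$ — which carries $W$ into $\tilde W$ by the same energy estimates, and whose restriction to $C_{V^c}(f)$ is the identity modulo $\tilde W$ by Lemma~\ref{lem:almost_const_homotopy} on non-crossing trajectories and by the energy bound on crossing ones — identifies $[\beta_H]$ with the analogous image of $a$ in $H_*(\tilde Q)$. Since $c(\tilde f;a)\leq\val(a)+\delta/4$ by stability, $a$ has a representative already inside $\tilde W$, this analogous image vanishes, and so does $[\beta_H]$.

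The most delicate step will be the verification that $\Psi'$ sends $W$ into $\tilde W$ and that the crossing-trajectory contributions of $\Psi'|_{C_{V^c}(f)}$ all land in $\tilde W$ (so that $\Psi'$ descends to the identity on $Q=\tilde Q$). This requires re-running the energy bound of Theorem~\ref{thm:main_energy_bound} along the $s$-dependent continuation homotopy — using that the small perturbations $h'$ of Remark~\ref{rem:choice_of_perturbations} behave uniformly along a monotone homotopy — together with ensuring Floer regularity of the continuation moduli spaces without disturbing the action-window structure. Once these estimates are in place, $[\beta_H]=0$, and the chain-level cancellation outlined above produces the desired representative $\alpha\in W$ satisfying both action bounds of the proposition.
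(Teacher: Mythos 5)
Your overall architecture (the action-window subcomplex $W=Z_V+Y$, the quotient $Q=CF^\lambda_*(f)/W$, and the comparison with an auxiliary Hamiltonian $\tilde f$ that kills $F$ inside $V$) is a genuinely different packaging from the paper's proof, which instead corrects a minimal-action cycle by hand, setting $\alpha_1=\alpha-\partial_f\pi_{V^c}\beta$ for a controlled primitive $\beta$ of $\Phi(\alpha)-\alpha_0$ and then running the algebraic identities of Lemmas~\ref{lem:proj_and_cont_or_diff} and~\ref{lem:action_non_dec_out}. Your use of Theorem~\ref{thm:main_energy_bound} to show $W$ is a subcomplex, and of Lemma~\ref{lem:almost_const_homotopy} plus the energy bound to show $\Psi'$ descends to the identity $Q\to\tilde Q$, mirrors the paper's energy arguments and is sound modulo the usual $\delta$-bookkeeping (and the standing convention that $f$ and $\tilde f$ have no $1$-periodic orbits in $N$).

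However, there is a genuine gap at the final step, ``this analogous image vanishes.'' The class you need to kill in $H_*(\tilde Q)$ is the image of the \emph{filtered} class of the specific cycle $\Psi'(\alpha_0)\in CF^{\lambda}_*(\tilde f)$, not of the quantum class $a$: the induced map $HF^\lambda_*(\tilde f)\to H_*(\tilde Q)$ is not constant on the fiber of $\iota^\lambda_*$ over $a$, so the existence of a low-action representative $\zeta\in\tilde W$ of $a$ does not by itself imply that the image of $\Psi'(\alpha_0)$ in $H_*(\tilde Q)$ is zero. You know $\Psi'(\alpha_0)-\zeta=\partial_{\tilde f}\gamma$ for some $\gamma$, but with no a priori action bound on $\gamma$; to conclude you must produce a primitive with $\cA_{\tilde f}(\gamma)\lesssim\lambda$, so that $\gamma$ survives into $CF^\lambda_*(\tilde f)$ and exhibits the two images in $H_*(\tilde Q)$ as equal. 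This is exactly where the paper invokes Usher's boundary depth (Definition~\ref{def:boundary_depth} and its $C^0$-stability): since $\tilde f$ (equivalently, the paper's $H_+$) is $C^0$-small, its boundary depth is small and such a primitive exists. Your proposal never invokes this ingredient, and without it the concluding implication fails as stated; with it, the rest of your scheme (choosing the $\delta$'s so that the various ``small'' action losses from $\partial_s h'$ and from the monotone homotopy stay inside the window) goes through.
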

    Before discussing the proof of the above proposition, let us explain how to derive Proposition~\ref{prop:slow_killer} from it.
    \begin{proof}[Proof of Proposition~\ref{prop:slow_killer}]
       We start by recalling the statement of the theorem. Suppose $F\geq 0$ is a Hamiltonian supported in a domain $V$ and let $N$ be a tubular neighborhood of the boundary of $V$ in $M\setminus V$, as discussed in Section~\ref{sec:results}. Let $a\in QH_*(M)$ be a quantum homology class and assume 
        $$
        0<c(F;a)-\val(a)<\gw(N).
        $$ 
        Let $\d>0$ be small enough, such that in particular $c(F;a)-\val(a)<\gw(N)-\d$. We need to construct a Hamiltonian $K:M\rightarrow \R$ supported in $\overline{V}\cup N=:\hat V$ such that 
        $$
        \|K\|_{C^0} = c(F;a)-\val(a) \quad\text{ and }\quad c(F+K;a)  = \val(a).
        $$
        In what follows we will actually construct a spectral killer supported in a small neighborhood of $\hat V$. To have a spectral killer that is supported in $\hat V$ one simply needs to shrink $N$ a little, and use Remark~\ref{rem:continuity_of_w}. The spectral killer we construct is a smooth approximation of a negative multiple of the indicator function of $\hat V$:
        \begin{equation*}
            K(x) :=  \begin{cases}
                        -c(F;a)+\val(a), & \text{ on }\hat V,\\
                        0 & \text{ outside of } \cN(\hat V).
                    \end{cases}
        \end{equation*}     
        Here $\cN(\hat V)$ denotes an arbitrarily small neighborhood of $\hat V$.  Near the boundary of $\hat V$ the function $K$ is a smooth interpolation between its values in  $\hat V$ and outside.   Clearly, $K$ is supported in $\cN(\hat V)$ and its uniform norm is $c(F;a)-\val(a)$. Therefore it remains to show that $c(F+K;a) = \val (a)$. To that end, consider   the pair $(f,J)$  of a perturbation of $F$ and an almost complex structure, and the representative $\alpha\in CF_*(f)$ of $a\in QH_*(M)$ from Proposition~\ref{prop:low_actions_outside}. Then,  \begin{equation*}
        \cA_f(\alpha)\leq c(F;a)+\d 
        \quad\text{and}\quad
        \cA_f(\pi_{V^c}\alpha)\leq \val(a)+\d.
        \end{equation*}
        Let $H$ be a regular perturbation of the homotopy $(x,t,s)\mapsto F(x,t)+ \beta(s)\cdot K$, 
        where $\beta:\R\rightarrow\R$ is a non-decreasing function that is equal to $0$ for $s\leq 0$ and to 1 for $s\geq 1$. 
        More explicitly,
        \begin{equation*}
            H(x,t,s) = F(x,t)+\beta(s) K+\varepsilon h(x) + h'(x,t,s).
        \end{equation*}
        The perturbation  $\varepsilon h+h'$ is chosen as in Remark~\ref{rem:choice_of_perturbations}. More explicitly, we assume that $ h : M \rightarrow \mathbb{R} $ has no critical points on $ N $, and we have $ h = 0 $ on $ \partial V $ and $ h = 1 $ on $ \partial \hat V $. Furthermore, we  assume that $\partial_s h'$ is supported in $M\times S^1\times [-R,R]$ for fixed $R$, and that $h'$ satisfies (\ref{eq:h_prime_bounds}) for small enough $\delta$.
    Note that $F$ is supported in $V$ and $K$ is constant on $\hat V$. In  particular, $F$ and $K$ are constant on $N$, and thus the homotopy $H$ is of the form considered in Setup~\ref{set:energy_bnd}, and for which Theorem~\ref{thm:main_energy_bound} applies (see also Remark~\ref{rem:main_energy_bound}.\ref{itm:energy_bnd_perturbation}). The left end $H_-$ of this homotopy is precisely the perturbation $f$ of $F$ that is considered in Remark~\ref{rem:choice_of_perturbations} and Proposition~\ref{prop:low_actions_outside}. Moreover, we choose the perturbation $h'$ such that $H_+$, which approximates $F+K$, has the same periodic orbits as $f$ in $V$ and in $M\setminus \cN(\hat V)$, and has no periodic orbits in $N$ (see Remark~\ref{rem:achiving_regularity}). Note that $H_+$ might have periodic orbits in $\cN(\hat V)\setminus\hat V$, where $f$ does not. 
        For a (regular)  such homotopy consider the induced continuation map $\Phi:CF(f)\rightarrow CF(H_+)$. Since $\Phi$ induces an isomorphism on homology, $\Phi(\alpha)$ also represents the class $a$. Let us show that $\cA_{H_+}(\Phi(\alpha))\leq \val(a)+2\d$, which will imply that $c(F+K;a)\leq \val(a)$ when we take $\d$ to zero.
        To show this, let us analyze the energies of Floer trajectories $u:\R\times S^1\rightarrow M$, counted by $\Phi$, starting from capped orbits $(x_-,D_-)$ in $\alpha$ and ending at some $(x_+,D_+)\in \Phi(\alpha)$. We split into cases:
        \begin{enumerate}
            \item \underline{$u\subset \hat V$}: In this region, $H$ is of the form 
            \[
            F(x,t)+\beta(s) \big(-c(F;a)+\val(a)\big) +\varepsilon h(x) +h'(x,t,s).
            \]
            The energy identity for continuation trajectories (\ref{eq:energy_id_homotopies}) reads:
            \begin{align*}
                \cA_{f}(x_-,D_-) - \cA_{H_+}(x_+,D_+) =& E(u)-\int_{\R\times S^1} \partial_s H\circ u \ ds\ dt \\
                \geq&\ 0- \big(-c(F;a)+\val(a)\big)\int_{\R\times S^1}\frac{d}{ds} \beta(s)\ ds\ dt \\
                &+\int_{\R\times S^1} \partial_s h'\circ u\ ds\ dt\\
                \geq&\  c(F;a)-\val(a) - \d,
            \end{align*}
            where the last inequality follows from our assumptions on the support and uniform norm of $\partial_s h'$.
            We have $\cA_{f}(\alpha) \leq c(F;a)+\d$, we conclude that $\cA_{H_+}(x_+,D_+)\leq \val(a)+2\d$ in this case.
            \item \underline{$x_-\subset M\setminus \cN(\hat V)$}: Since $\cA_{f}(\pi_{V^c}\alpha)\leq \val(a)+\d$, and $\partial_s H\leq 0+\partial_s h'$ everywhere. Recalling our assumptions on the norm and support of $\partial_s h'$ and viewing again the energy identity, we conclude that $\cA_{H_+}(x_+, D_+)\leq \val(a)+2\d$.
            \item \underline{$x_-\subset V$ and $u$ intersects both $V$ and $M\setminus \hat V$}: As explained in Remark~\ref{rem:choice_of_perturbations}, the pair $(f,J)$ from Proposition~\ref{prop:low_actions_outside} 
            satisfies the assumptions of Theorem~\ref{thm:main_energy_bound} (together with Remark~\ref{rem:main_energy_bound}.\ref{itm:energy_bnd_perturbation}).  Theorem~\ref{thm:main_energy_bound} guarantees that the energy of any Floer trajectory of $(f,J)$ that starts in $V$ and crosses $N$ is bounded by 
            \[
            E(u)\geq  C(N, g_J,h)\geq \gw(N)-\d.
            \] 
            Recall that we chose $\d$ such that $w(N)-\d > c(F;a)-\val(a)$. Therefore,
            \begin{align*}
                \cA_{H_+}(x_+,D_+)&\leq \cA_f(x_-,D_-) - E(u)+\int_{\R\times S^1} \partial_s H\circ u \ ds\ dt \\
                &\leq c(F;a) - (\gw(N)-\d) + \int_{\R\times S^1}\partial_s h'\circ u\ ds\ dt \\
                &\leq c(F;a) - (c(F;a)-\val(a))+\d = \val(a)+\d.
            \end{align*}
        \end{enumerate}
        We conclude that all capped orbits $(x_+,D_+)$ in $\Phi(\alpha)$ have action bounded by $\val(a)+\d$. When $\d$ tends to zero, $H_+\rightarrow F+K$ and this guarantees that $c(F+K;a)\leq \val(a)$ as required.
    \end{proof}
    The rest of this section is dedicated to proving Proposition~\ref{prop:low_actions_outside}. The proof  requires two lemmas that analyze how the action filtration behaves with respect to the linear projections of the Floer chain complex to $V$ and $V^c$. In what follows we use Notations~\ref{not:CF_element_contained}.

    \begin{lemma}\label{lem:proj_and_cont_or_diff}
        Let $F$ be a non-negative Hamiltonian supported in $V$ and fix $\d>0$. There exists a perturbation $f$ of $F$ and a homotopy $H$ from $f$ to a small Morse function $H_+$, such that $f$ and $H_+$ coincide up to second order on their critical points in $M\setminus V$, the pairs $(f,J)$ and $(H,J)$ are Floer regular and the following holds. 
        \begin{enumerate}
            \item \label{itm:cont_minus_is} Let  $\Phi:CF_*(f)\rightarrow CF_*(H_+)$ be the continuation map associated to $(H,J)$. Then $\Phi$ does not increase action by more than $\d$, and the map 
            $$
            \pi_{V^c}\circ \Phi - \pi_{V^c} 
            $$
            decreases the action by at least $\gw(N)-\d$.
            \item Let $\partial_{H_+}$ be the differential with respect to $(H_+,J)$, then $\pi_{V^c}\circ \partial_{H_+}\circ \pi_{V^c} = \pi_{V^c}\circ \partial_{H_+}$.
        \end{enumerate}
    \end{lemma}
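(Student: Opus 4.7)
The plan is to engineer $f$, $H_+$, and a homotopy $H$ from $f$ to $H_+$ so that three properties hold simultaneously: all three coincide with $\varepsilon h$ on $N$ up to the $C^\infty$-small perturbation allowed in Remark~\ref{rem:choice_of_perturbations} (so Theorem~\ref{thm:main_energy_bound} applies), $H$ is $s$-independent on $V^c$ (so Lemma~\ref{lem:almost_const_homotopy} applies there), and $\int\partial_sH\circ u\,ds\,dt\leq \d$ along every continuation trajectory. Concretely I would take $H_+$ to be a $C^2$-small Morse function agreeing with $\varepsilon h$ and such that $(H_+,J)$ is regular, set $f:=F+H_++h'$ for a $C^\infty$-small perturbation $h'$ supported in $V$ making $(f,J)$ regular via Remark~\ref{rem:achiving_regularity}(1), and interpolate by
\[
H(x,t,s):=\chi(s)F(x,t)+H_+(x)+h''(x,t,s),
\]
where $\chi$ is a monotone cutoff from $1$ to $0$ supported in $[0,1]$ and $h''$ is a $C^\infty$-small homotopy with $\partial_sh''$ supported in $V\times S^1\times[-R,R]$, interpolating between $h'$ and $0$. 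Because $F$ and $h''$ vanish on $V^c$, one automatically has $f|_{V^c}=H_+|_{V^c}$ and $\partial_sH|_{V^c}\equiv 0$, so coincidence of $f$ and $H_+$ to all orders on $M\setminus V$ comes for free.

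For part (1)(a), the energy identity~\eqref{eq:energy_id_homotopies} gives
\[
\mathcal{A}_{H_+}(x_+,D_+)-\mathcal{A}_f(x_-,D_-)=-E(u)+\int\partial_sH\circ u\,ds\,dt
\]
for each continuation trajectory; since $\chi'\leq 0$, $F\geq 0$, and the $\partial_sh''$ contribution integrates to at most $\d$ by the support and norm conditions, this bounds the action increase by $\d$. For (1)(b), I first note that neither $f$ nor $H_+$ has periodic orbits on $N$ (since $h$ is non-critical there and $\varepsilon$ is small), so every orbit appearing in $\pi_{V^c}$ lies in $M\setminus\hat V$. I then split the continuation trajectories contributing to $\pi_{V^c}\circ\Phi-\pi_{V^c}$ by their starting point. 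In the case $x_-\subset V^c$, trajectories entirely contained in $V^c$ are forced to be constant by Lemma~\ref{lem:almost_const_homotopy} applied to $U=V^c$ and account precisely for the $-\pi_{V^c}$ identity term; any remaining trajectory enters $V$, and since $x_+\not\subset V$ rules out the first alternative of Theorem~\ref{thm:main_energy_bound}, we obtain $E(u)\geq \gw(N)-\d$. In the case $x_-\subset V$, $x_+\subset M\setminus\hat V$, the same alternative is ruled out, giving again $E(u)\geq \gw(N)-\d$. Feeding these energy bounds into the energy identity and absorbing the $\partial_sH$ correction into $\d$ yields the required action decrease.

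For part (2), since $H_+$ is a $C^2$-small time-independent Morse function, the Morse--Floer identification produces $\partial_{H_+}=\partial^{Morse}_h\otimes\id_\Lambda$. For a critical point $x$ of $h$ in $V$, any $-\nabla h$-trajectory issuing from $x$ is confined to the sublevel set $\{h\leq h(x)\}$, which, by the assumptions $h|_V\leq 0$, $h|_{\partial V}=0$, and the absence of critical points of $h$ on $N$, lies entirely in $V$. Hence $\partial_{H_+}(x\otimes q^A)\in C_V(H_+)$ for every $A$, which is exactly the identity $\pi_{V^c}\circ\partial_{H_+}\circ\pi_V=0$. The main technical obstacle will be simultaneously achieving Floer regularity of $(f,J)$ and $(H,J)$ while preserving $f|_{V^c}=H_+|_{V^c}$ and the $s$-independence of $H|_{V^c}$; this forces $h'$ and $h''$ to be supported in $V$, and the resolution is to invoke Remark~\ref{rem:achiving_regularity} to check that such restricted perturbations are nonetheless generic enough for transversality.
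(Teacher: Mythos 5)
Your construction and overall strategy are essentially the paper's: you turn off $F$ over a small Morse background that equals $\varepsilon h$ on $N$, use the energy identity together with Theorem~\ref{thm:main_energy_bound} to exclude high-action contributions to $\pi_{V^c}\circ\Phi-\pi_{V^c}$, and rule out Morse trajectories leaving $V$ for part (2). The genuine gap is exactly the step you flag at the end and then dismiss: you need the regularity-achieving perturbations $h',h''$ to be spatially supported in $V$, so that $H$ is \emph{exactly} $s$-independent and exactly equal to $H_+$ on $V^c$, and you assert that Remark~\ref{rem:achiving_regularity} shows such restricted perturbations suffice for transversality. It does not: that remark only allows (i) perturbations of a Hamiltonian agreeing with it to second order along its periodic orbits, and (ii) homotopy perturbations whose $s$-dependence is confined to a bounded interval $[-R,R]$; neither permits confining the perturbation to the region $V$. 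Whether $(H,J)$ (and $(H_+,J)$ with $H_+$ pinned to $\varepsilon h$ on $N$) can be made regular by perturbations supported in $V$ is precisely the delicate point: trajectories contained in $M\setminus V$ must be shown to be automatically regular, and trajectories meeting $V$ must be shown to admit regularizing perturbations localized in $V$, which requires an injectivity/unique-continuation argument you do not supply. The paper's proof is organized to avoid this: it perturbs globally, asks only that $f$ and $H_+$ agree to second order at their critical points in $M\setminus V$ (which is all the lemma claims), and then invokes Lemma~\ref{lem:almost_const_homotopy} --- whose whole purpose is to handle a homotopy that is only \emph{approximately} constant on $U=\mathrm{int}(M\setminus V)$ --- to identify the counts of trajectories confined to $M\setminus V$ with the identity counts. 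In your scheme Lemma~\ref{lem:almost_const_homotopy} is used only in the exact-constancy situation where it is not needed, while the situation it was designed for (a nonzero perturbation on $V^c$) has been assumed away without justification.

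A secondary problem is part (2): you argue about $-\nabla h$ trajectories and use $h|_V\leq 0$, but the differential $\partial_{H_+}$ counts negative gradient trajectories of $H_+$, whose restriction to $V$ is an essentially arbitrary $C^2$-small Morse function (it is not $\varepsilon h$, which is only prescribed on $N$ and need not be Morse), so the containment of the sublevel set $\{H_+\leq H_+(x)\}$ in $V$ is not automatic. The correct and simpler argument, which the paper uses, is that $H_+=\varepsilon h$ near $\partial V$ with $h$ increasing outward, so $-\nabla H_+$ points into $V$ along $\partial V$; hence $V$ is forward invariant under the negative gradient flow and no Morse trajectory runs from a critical point in $V$ to one in $M\setminus V$ (alternatively one could arrange $\max_V H_+<\varepsilon$). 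This part is repairable, but as written the sublevel-set argument is applied to the wrong function.
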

    \begin{proof}
        Consider the linear homotopy $H':M\times S^1\times \R\rightarrow \R$ between $F$ and zero, i.e. $H'(x,t,s):=\beta(s)F(x,t)$, {where $\beta : \mathbb{R} \rightarrow [0,1] $ is a smooth monotone non-increasing function which equals to 1 near $ -\infty $ and to $ 0 $ near $ +\infty $}. Then $H'$ is supported in $V$. Let $h:{N}\rightarrow\R$ be the function from Remark~\ref{rem:choice_of_perturbations}, i.e.  $Crit(h)=\emptyset$, $h|_{\partial V}=0$, $h|_{\partial \hat V}=1$ and $C(N, g_J, h)\geq \gw(N)-\d$. Let $H''$ be a perturbation of $H'$ such that 
        \begin{itemize}
            \item $H''|_N=\varepsilon\cdot h$ for $\varepsilon>0$ small enough,
            \item $H''_-$ is a non-degenerate Hamiltonian and $H''_+$ is a $C^2$-small Morse function on $M$ that does not depend on $t$.
            \item $H''|_{M\setminus V}$ is independent of $t$ and $s$.
        \end{itemize}
        Let $H(x,s,t)=H''(x,t,s)+ h'(x,t,s)$ be a further perturbation of $H''$ such that $(H,J)$ is regular and $H_\pm$ and $H''_\pm$ coincide up to second order on their periodic orbits (see Remark~\ref{rem:achiving_regularity}). Denote $f:=H_-$ and let  $\alpha\in CF_*(f)$ be a chain. 
        \begin{enumerate}
            \item 
            The energy identity (\ref{eq:energy_id_homotopies}) implies that the action of  $\Phi(\alpha)$ is not greater than $\cA_f(\alpha)+\int_{\R\times S^1}\max_M \partial_s H ds\ dt$.
            Our assumption that $F$ is non-negative implies that $\partial_s H' = \left(\frac{d}{ds}\beta(s)\right)\cdot F\leq 0$, namely, $H'$ is a monotone decreasing homotopy. 
            Since $H$ is a small perturbation of $H'$ and the support of $\partial_s H$ is uniformly bounded, we can guarantee that $\int_{\R\times S^1}\max_M\partial_s H\ ds\ dt\leq \d$ when the perturbation is small enough. Therefore $\Phi$ does not increase action by more than $\d$.

            Let $(x,A)$ be a pair of a constant orbit $x\in M\setminus V $ and a capping sphere $A\in \pi_2(M)$. Assume that $\cA_{H_+}(x,A)> \cA_f(\alpha)- w(N)+2\d$, and let us show that $\left<\Phi(\alpha),(x,A)\right> = \left<\alpha,(x,A)\right>$.
            By definition of $\Phi$, it is sufficient to show that for every  $(y,D)\in \alpha$, the count of the moduli space $\cM_{(H,J)}((y,D),(x,A))$ is 1 if $(y,D)=(x,A)$ and zero otherwise. We start by showing that all elements of the above moduli space are contained in the interior of $M\setminus V$. 
            Indeed, recall that by the energy identity (\ref{eq:energy_id_homotopies}),
            \begin{align*}
            E(u)&= \cA_f(y,D)-\cA_{H_+}(x,A) +\int_{\R\times S^1}\partial_s H\circ u\ ds\ dt\\
            &\leq \cA_f(\alpha)-\cA_{H_+}(x,A) +\int_{\R\times S^1}\partial_s H\circ u\ ds\ dt\\
            &<w(N)-2\d +\int_{\R\times S^1}\partial_s H\circ u\ ds\ dt.
            \end{align*}
            As mentioned above, the integral $\int_{\R\times S^1}\partial_s H\circ u\ ds\ dt$ is bounded by $\d$. Therefore, 
            \[
            E(u)< \gw(N)-\d.
            \]
            On the other hand, by Remark~\ref{rem:choice_of_perturbations} and Theorem~\ref{thm:main_energy_bound}, any solution $u$ that intersects both $\partial V$ and $\partial \hat V$ must have energy at least $w(N)-\d$. 
            This implies that $u\subset int(M\setminus V)$ for all $u\in \cM_{(H,J)}((y,D),(x,A))$. Recall that Lemma~\ref{lem:almost_const_homotopy} above states that when a regular homotopy $H$ is almost constant on a open set $U$, and all elements of a given moduli space are contained in $U$, then  the count is the same as the constant homotopy. Applying this to $U=int(M\setminus V)$ we obtain 
            \begin{align*}
               \left<\Phi(y,D),(x,A)\right>&:= \#\cM_{(H,J)}((y,D),(x,A)) \\
               &= \#\cM_{(H_-,J)}((y,D),(x,A)) = \begin{cases}
                0, & \text{ if }(y,D)\neq(x,A)\\
                1, & \text{ if }(y,D)=(x,A)
            \end{cases}\\
            &=:\left<(y,D),(x,A)\right>.
            \end{align*}
            In other words, $\left<\Phi(y,D)-(y,D),(x,A)\right> =0 $ for every $(y,D)\in \alpha$ and every $(x,A)$ in $M\setminus V$ of action at least $\cA_f(\alpha)-w(N)+2\d$. Hence, we conclude that $\left<\Phi(\alpha)-\alpha,(x,A)\right>$ vanishes for any such $(x,A)$. This immediately implies that $\pi_{V^c}(\Phi(\alpha)-\alpha)$ consists only of generators of action less than $\cA_f(\alpha)-w(N)+2\d$,  and thus the map $\pi_{V_c}\Phi-\pi_{V^c}$ reduces action by at least $\gw(N)-2\d$.
            Since $\d>0$ is arbitrary, this proves part \ref{itm:cont_minus_is} of the lemma.

            \item $H_+$ is a small Morse function and therefore its Floer differential $\partial_{H_+}$ coincides with the Morse differential, extended linearly over $\Lambda$. Since the gradient of $H_+$ on $\partial V$ points outwards of $V$, it follows that there are no Morse trajectories from $V$ to $M\setminus V$.
        \end{enumerate}
    \end{proof}
    
    \begin{rem}[Lemma~\ref{lem:proj_and_cont_or_diff} with virtual counts]
        To prove Lemma~\ref{lem:proj_and_cont_or_diff} with virtual counts as in Remark~\ref{rem:foundations}, one needs to consider all stable maps connecting $(x,A)\in\pi_{V^c}\circ \Phi(\alpha)$ with $(y,D)\in\alpha$, such that the action difference of these two generators is less than $\gw(N)-2\d$. One can show that all such stable maps are contained in $M\setminus V$. Indeed, consider a hypersurface $\Sigma\subset N$ splitting $N$ into two tubular neighborhood $N_\pm$. If there exists a sphere bubble that intersects $\Sigma$, then its energy is at least $\gw(N)$ (see \cite{sikorav1994some} and the proof of Lemma~\ref{lem:Hein_sikorav}). If the stable map intersects $V$ but no sphere bubbles intersect $\Sigma$, then there must be a Floer trajectory intersecting $\Sigma$. This Floer trajectory must cross $N_+$ or $N_-$, since there are no periodic orbits in $N$. As a consequence, we again obtain a lower bound for the energy\footnote{Taking this approach one might need to shrink the value of $\gw(N)$ by a constant factor.}. Overall we conclude that such a stable map has energy that is too big given the action difference.
        Having concluded that all stable maps are contained in $M\setminus V$, one uses the assumptions stated in Remark~\ref{rem:foundations} (see Remark~\ref{rem:lemma_in_perliminaries} as well).
    \end{rem}
    
    \begin{lemma}\label{lem:action_non_dec_out}
        Let $H$ and $\d>0$ be as in Lemma~\ref{lem:proj_and_cont_or_diff}. For any chain $\alpha\in CF_*(f)$ of action less than $\gw(N)+\val(a)-\d$ it holds that 
        \begin{equation}\label{eq:action_bnd_out_gen_chain}
            \cA_f(\pi_{V^c}\alpha) \leq \max\{\cA_{H_+}(\pi_{V^c}\Phi(\alpha)),\val(a)\},
        \end{equation}
        where $\Phi:CF_*(f)\rightarrow CF_*(H_+)$ is the continuation map associated to $(H,J)$.
    \end{lemma}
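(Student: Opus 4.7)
My plan is to derive the lemma as an almost immediate consequence of Lemma~\ref{lem:proj_and_cont_or_diff}(1) combined with the purely algebraic identity
\begin{equation*}
\pi_{V^c}\alpha \;=\; \pi_{V^c}\Phi(\alpha) \;-\; \bigl(\pi_{V^c}\Phi - \pi_{V^c}\bigr)(\alpha).
\end{equation*}
Since for any two Floer chains $\beta_1,\beta_2$ the action of $\beta_1-\beta_2$ is at most $\max\{\cA(\beta_1),\cA(\beta_2)\}$, it suffices to control the action of each summand on the right. The first summand is $\pi_{V^c}\Phi(\alpha)$ itself, whose $H_+$-action is precisely the first entry of the max in the statement. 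For the second summand, Lemma~\ref{lem:proj_and_cont_or_diff}(1) tells us that $\pi_{V^c}\circ\Phi - \pi_{V^c}$ decreases action by at least $w(N)-\delta$, so every generator appearing in $(\pi_{V^c}\Phi - \pi_{V^c})(\alpha)$ has action at most $\cA_f(\alpha)-(w(N)-\delta)$. The hypothesis $\cA_f(\alpha) < w(N)+\val(a)-\delta$ then forces this quantity to be strictly less than $\val(a)$, producing the second entry of the max.

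The one subtlety, which I would treat as the main (though minor) step, is that the three chains in the identity above formally live in different Floer complexes: $\pi_{V^c}\alpha$ and $(\pi_{V^c}\Phi - \pi_{V^c})(\alpha)$ naturally sit in $CF_*(f)$, while $\pi_{V^c}\Phi(\alpha)$ sits in $CF_*(H_+)$. To make both the decomposition and the resulting $\max$-bound meaningful, I would use the fact from the construction in Lemma~\ref{lem:proj_and_cont_or_diff} that $f$ and $H_+$ coincide up to second order on their critical points in $M\setminus V$. This implies that $f$ and $H_+$ share the same $1$-periodic orbits in $M\setminus V$ and that, for any capped orbit $(x,D)$ with $x\subset M\setminus V$, the action integrals $\int_0^1 f(x(t),t)\,dt$ and $\int_0^1 H_+(x(t),t)\,dt$ are equal; hence $\cA_f$ and $\cA_{H_+}$ agree on every $V^c$-generator. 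After this identification, the two action bounds above combine directly to yield inequality~(\ref{eq:action_bnd_out_gen_chain}). Since the genuine analytic content is already packaged inside Lemma~\ref{lem:proj_and_cont_or_diff}(1), I expect the proof to be essentially this short bookkeeping argument, with no further obstacle.
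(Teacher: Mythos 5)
Your proposal is correct and follows essentially the same route as the paper: the paper also writes $\pi_{V^c}\alpha = \pi_{V^c}\Phi(\alpha) - (\pi_{V^c}\Phi-\pi_{V^c})(\alpha)$, bounds the second term via Lemma~\ref{lem:proj_and_cont_or_diff}(1) and the action hypothesis, and uses the identification $\cA_f(\pi_{V^c}\alpha)=\cA_{H_+}(\pi_{V^c}\alpha)$ coming from $f$ and $H_+$ agreeing on their critical points outside $V$. The bookkeeping point you flag about the chains living in different complexes is exactly the ingredient the paper uses implicitly, so there is no gap.
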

    \begin{proof}
       By Lemma~\ref{lem:proj_and_cont_or_diff}, the map $\pi_{V^c}\circ \Phi - \pi_{V^c} $ decreases action by at least $\gw(N)-\d$, namely: 
       $$\cA_{H_+}(\pi_{V^c}\circ \Phi(\alpha) - \pi_{V^c}\alpha)\leq \cA_f(\alpha) - \gw(N)+\d\leq \val(a),
       $$
       where the last inequality follows from out assumption on the action of $\alpha$.
       Hence,
       \begin{align*}
        \cA_f(\pi_{V^c}\alpha) &= \cA_{H_+}(\pi_{V^c}\alpha)\leq \max\big\{ \cA_{H_+}(\pi_{V^c}\Phi\alpha),\ \cA_{H_+}((\pi_{V^c}\circ \Phi - \pi_{V^c})\alpha)\big\}\\
        &\leq \max\big\{\cA_{H_+}(\pi_{V^c}\Phi\alpha),\  \val(a)\big\}. 
       \end{align*}
   \end{proof}
   We are now ready to prove Proposition~\ref{prop:low_actions_outside}.
    \begin{proof}[Proof of Proposition~\ref{prop:low_actions_outside}]
       Let $F$ be a non-negative Hamiltonian supported in $V$ and fix $a\in QH_*(M)$. By shrinking $\d$, we may assume that $c(F;a)<\gw(N)+\val(a)-4\d$. We need to construct a representative of $a$ whose total action is bounded by $c(F;a)+\d$ and whose actions in $M\setminus V$ are bounded by $\val(a)+\d$. Clearly, this is trivial if $c(F;a)\leq \val(a)$, so assume otherwise.

        \vspace{3pt}
        
        Let  $H:M\times S^1\times \R\rightarrow \R$ be the homotopy between a perturbation $f$ of $F$ and a $C^2$-small, time independent Morse function, from Lemma~\ref{lem:proj_and_cont_or_diff}. Let $\alpha\in CF_*(f)$ be a cycle representing the class $a\in QH_*(M)$ whose action is less than $c(F;a)+\d\leq \gw(N)+\val(a)-3\d$. Denote by $\Phi:CF_*(f)\rightarrow CF_*(H_+)$  the continuation map with respect to $(H,J)$, then $\Phi(\alpha)$ represents the class $a$. Let $\alpha_0\in CF_*(H_+)\cong CM_*(H_+)\otimes \Lambda$ be a representative of $a$ of action less than $\val(a)+\d$. Since the boundary depth (see Definition~\ref{def:boundary_depth} and the discussion following it) of $H_+$ is arbitrarily small, there exists a chain $\beta\in CF_*(H_+)$ such that $\partial_{H_+}\beta = \Phi(\alpha)-\alpha_0$, and whose action is bounded by 
        \begin{align}\label{eq:beta_action_bound}
            \cA_{H_+}(\beta)&\leq \cA_{H_+}(\Phi(\alpha)-\alpha_0) +\d \nonumber\\
            &\leq \max\{\cA_{H_+}(\Phi(\alpha)), \cA_{H_+}(\alpha_0)\}+\d\nonumber \\
            &\leq \max\{\cA_{f}(\alpha)+2\d, \val(a)+2\d\}\leq c(F;a)+3\d,
        \end{align} 
        where the third inequality follows from Lemma~\ref{lem:proj_and_cont_or_diff}.
        Let $\pi_{V^c}\beta$ be the chain obtained as the projection of $\beta$ to the subspace spanned by the critical points outside of $V$. Since $f$ and $H_+$ coincide (up to second order) on their critical points outside of $V$, we can identify  $\pi_{V^c}\beta$ with a chain in $CF_*(f)$ as well. The difference $\alpha_1:=\alpha-\partial_{f}\pi_{V^c}\beta$ is a chain representing $a$ as well.   We claim that $\alpha_1$ satisfies the assertions of the claim, namely that:
        \begin{enumerate}
            \item \label{itm:alpha_1_action_bnd} $\cA_f(\alpha_1)\leq  c(F;a)+3\d$, and 
            \item \label{itm:alpha_1_proj_action_bnd} $\cA_f(\pi_{V^c}\alpha_1)\leq \val(a)+\d$.
        \end{enumerate}
        To see \ref{itm:alpha_1_action_bnd}, it is sufficient to notice that 
        $$
        \cA_f(\partial_f\pi_{V^c}\beta) \leq \cA_f(\pi_{V^c}\beta) = \cA_{H_+}(\pi_{V^c}\beta)\leq  \cA_{H_+}(\beta)\overset{(\ref{eq:beta_action_bound})}{\leq}   c(F;a)+3\d.
        $$
        Item \ref{itm:alpha_1_proj_action_bnd} requires applying Lemmas \ref{lem:proj_and_cont_or_diff} and      \ref{lem:action_non_dec_out} to $\alpha_1$. 
        Note that, by item \ref{itm:alpha_1_action_bnd}, $\alpha_1$ satisfies the hypothesis of Lemma~\ref{lem:action_non_dec_out}, since 
        \begin{equation}\label{eq:action_bnd_alpha1}
            \cA_f(\alpha_1)\leq c(F;a)+3\d < \gw(N)+\val(a) - 4\d+3\d = \gw(N)+\val(a)-\d.
        \end{equation}
        Applying Lemma~\ref{lem:action_non_dec_out} we conclude that
        \begin{equation}\label{eq:pi_v_alpha1}
            \cA_f(\pi_{V^c}\alpha_1)\leq \max\{\cA_{H_+}(\pi_{V^c}\Phi(\alpha_1)),\val(a)\}.
        \end{equation}
        Since $\Phi$ is a chain map, we may write $\Phi(\alpha_1) = \Phi(\alpha) - \Phi\circ \partial_f \pi_{V^c}\beta = \Phi(\alpha) - \partial_{H_+}\circ \Phi(\pi_{V^c}\beta)$. Adding and subtracting $\partial_{H_+}\circ \pi_{V^c}\beta$ we can write $\Phi(\alpha_1)$ as a sum of two terms:
        \begin{align*}
        \Phi(\alpha_1) &= \Big(\Phi(\alpha) - \partial_{H_+}\circ \pi_{V^c}\beta\Big)+\Big(\partial_{H_+}\circ \pi_{V^c}\beta-\partial_{H_+}\circ \Phi(\pi_{V^c}\beta)\Big)\\
        &=\Big(\Phi(\alpha) - \partial_{H_+}\circ \pi_{V^c}\beta\Big)+\Big(\partial_{H_+}\circ (\pi_{V^c}- \Phi\pi_{V^c})\Big)\beta.
        \end{align*}
        Composing with $\pi_{V^c}$ and using the second assertion of Lemma~\ref{lem:proj_and_cont_or_diff} that $\pi_{V^c}\partial_{H_+}\pi_{V^c} = \pi_{V^c}\partial_{H_+}$, we obtain
        \begin{align*}
        \pi_{V^c}\Phi(\alpha_1) &= \Big(\pi_{V^c}\Phi(\alpha) - \pi_{V^c}\partial_{H_+}\circ \pi_{V^c}\beta\Big)+\Big(\pi_{V^c}\circ\partial_{H_+}\circ (\pi_{V^c}- \Phi\pi_{V^c})\Big)\beta\\
        &= \Big(\pi_{V^c}\Phi(\alpha) - \pi_{V^c}\partial_{H_+}\beta\Big)+\Big(\pi_{V^c}\circ\partial_{H_+}\circ \pi_{V^c}\circ (\pi_{V^c}- \Phi\pi_{V^c})\Big)\beta\\
        &= \Big(\pi_{V^c}\Phi(\alpha) - \pi_{V^c}\partial_{H_+}\beta\Big)+\Big(\pi_{V^c}\circ\partial_{H_+}\circ(\pi_{V^c}- \pi_{V^c}\Phi\pi_{V^c})\Big)\beta.
        \end{align*}
        Recalling that $\partial_{H_+}\beta = \Phi(\alpha)-\alpha_0$, this yields 
        \begin{align}\label{eq:proj_alpha1_decomp}
        \pi_{V^c}\Phi(\alpha_1) &= \Big(\pi_{V^c}\Phi(\alpha) - \pi_{V^c}(\Phi(\alpha)-\alpha_0)\Big)+\Big(\pi_{V^c}\circ\partial_{H_+}\circ(\pi_{V^c}- \pi_{V^c}\Phi\pi_{V^c})\Big)\beta \nonumber\\
        &= \pi_{V^c}\alpha_0 + \Big(\pi_{V^c}\circ\partial_{H_+}\circ(\pi_{V^c}- \pi_{V^c}\Phi\pi_{V^c})\Big)\beta
        \end{align} 
        As we chose $\alpha_0$ to have action bounded by $\val(a)+\d$, is suffices to bound the action of the second summand. By Lemma~\ref{lem:proj_and_cont_or_diff}, the map $\pi_{V^c} \circ \Phi\circ\pi_{V^c} - \pi_{V^c}$ decreases action by at least $\gw(N)-\d$. Therefore, 
        \begin{align*}
            \cA_{H_+} \Big(\pi_{V^c}\circ\partial_{H_+}\circ(\pi_{V^c}- \pi_{V^c}\Phi\pi_{V^c})\beta\Big) &\leq  \cA_{H_+}\Big((\pi_{V^c}- \pi_{V^c}\Phi\pi_{V^c})\beta\Big)\\
            &\leq \cA_{H_+}(\beta)-\gw(N)+\d\\
            &\overset{(\ref{eq:beta_action_bound})}{\leq} c(F;a)+3\d -\gw(N)+\d\\
            &< \gw(N)+\val(a)-4\d+2\d -\gw(N)+\d \\
            &= \val(a)-\d.
        \end{align*}
        Combining this with (\ref{eq:proj_alpha1_decomp}), we get
        \begin{align*}
            \cA_{H_+}(\pi_{V^c} \Phi(\alpha_1) )&\leq \max\Big\{\cA_{H_+}(\pi_{V^c}\alpha_0 ), \cA_{H_+} \Big(\pi_{V^c}\circ\partial_{H_+}\circ(\pi_{V^c}- \pi_{V^c}\Phi\pi_{V^c})\beta\Big)\Big\}\\
            &\leq \max\{\val(a)+\d,\val(a)-\d\} = \val(a)+\d.
        \end{align*}
        Together with (\ref{eq:pi_v_alpha1}) this shows  $\cA_f(\pi_{V^c}\alpha_1) \leq \val(a)+\d$ and thus concludes the proof.
        \end{proof}

\bibliographystyle{alpha}
\bibliography{refs}

\begin{thebibliography}{HLRS16}

\bibitem[ADE14]{audin2014morse}
Michele Audin, Mihai Damian, and Reinie Ern{\'e}.
\newblock {\em Morse theory and Floer homology}.
\newblock Springer, 2014.

\bibitem[BLT20]{buhovsky2020poisson}
Lev Buhovsky, Alexander Logunov, and Shira Tanny.
\newblock Poisson brackets of partitions of unity on surfaces.
\newblock {\em Commentarii Mathematici Helvetici}, 95(1):247--278, 2020.

\bibitem[BX22]{bai2022arnold}
Shaoyun Bai and Guangbo Xu.
\newblock Arnold conjecture over integers.
\newblock {\em arXiv preprint arXiv:2209.08599}, 2022.

\bibitem[EPZ07]{entov2006quasi}
Michael Entov, Leonid Polterovich, and Frol Zapolsky.
\newblock Quasi-morphisms and the {P}oisson bracket.
\newblock {\em Pure Appl. Math. Q.}, 3(4, Special Issue: In honor of Grigory
  Margulis. Part 1):1037--1055, 2007.

\bibitem[FW22]{filippenko2022polyfold}
Benjamin Filippenko and Katrin Wehrheim.
\newblock A polyfold proof of the {A}rnold conjecture.
\newblock {\em Selecta Mathematica}, 28(1):1--73, 2022.

\bibitem[GT20]{ganor2020floer}
Yaniv Ganor and Shira Tanny.
\newblock Floer theory of disjointly supported {H}amiltonians on symplectically
  aspherical manifolds.
\newblock {\em arXiv preprint arXiv:2005.11096, to appear in Algebraic and
  Geometric Topology}, 2020.

\bibitem[Gut14]{gutt2014generalized}
Jean Gutt.
\newblock Generalized {C}onley-{Z}ehnder index.
\newblock In {\em Annales de la Facult{\'e} des sciences de Toulouse:
  Math{\'e}matiques}, volume~23, pages 907--932, 2014.

\bibitem[GV21]{groman2021locality}
Yoel Groman and Umut Varolgunes.
\newblock Locality of relative symplectic cohomology for complete embeddings.
\newblock {\em arXiv preprint arXiv:2110.08891}, 2021.

\bibitem[Hei12]{hein2012conley}
Doris Hein.
\newblock The {C}onley conjecture for irrational symplectic manifolds.
\newblock {\em Journal of Symplectic Geometry}, 10(2):183--202, 2012.

\bibitem[HLRS16]{humiliere2016towards}
Vincent Humili{\`e}re, Fr{\'e}d{\'e}ric Le~Roux, and Sobhan Seyfaddini.
\newblock Towards a dynamical interpretation of {H}amiltonian spectral
  invariants on surfaces.
\newblock {\em Geometry \& Topology}, 20(4):2253--2334, 2016.

\bibitem[HS95]{hofer1995floer}
Helmut Hofer and Dietmar~A Salamon.
\newblock Floer homology and {N}ovikov rings.
\newblock In {\em The Floer memorial volume}, pages 483--524. Springer, 1995.

\bibitem[Ish15]{ishikawa2015spectral}
Suguru Ishikawa.
\newblock Spectral invariants of distance functions.
\newblock {\em Journal of Topology and Analysis}, page 1650025, 2015.

\bibitem[MS12]{mcduff2012j}
Dusa McDuff and Dietmar Salamon.
\newblock {\em J-holomorphic curves and symplectic topology}, volume~52.
\newblock American Mathematical Soc., 2012.

\bibitem[Oh05]{oh2005construction}
Yong-Geun Oh.
\newblock Construction of spectral invariants of {H}amiltonian paths on closed
  symplectic manifolds.
\newblock In {\em The breadth of symplectic and Poisson geometry}, pages
  525--570. Springer, 2005.

\bibitem[Par16]{pardon2016algebraic}
John Pardon.
\newblock An algebraic approach to virtual fundamental cycles on moduli spaces
  of pseudo-holomorphic curves.
\newblock {\em Geometry \& Topology}, 20(2):779--1034, 2016.

\bibitem[Pay18]{payette2018geometry}
Jordan Payette.
\newblock The geometry of the {P}oisson bracket invariant on surfaces.
\newblock {\em arXiv preprint arXiv:1803.09741}, 2018.

\bibitem[Pol14]{polterovich2014symplectic}
Leonid Polterovich.
\newblock Symplectic geometry of quantum noise.
\newblock {\em Communications in Mathematical Physics}, 327(2):481--519, 2014.

\bibitem[PR14]{polterovich2014function}
Leonid Polterovich and Daniel Rosen.
\newblock {\em Function theory on symplectic manifolds}.
\newblock American Mathematical Society, 2014.

\bibitem[Rez22]{rezchikov2022integral}
Semon Rezchikov.
\newblock Integral {A}rnol'd conjecture.
\newblock {\em arXiv preprint arXiv:2209.11165}, 2022.

\bibitem[RS93]{robbin1993maslov}
Joel Robbin and Dietmar Salamon.
\newblock The {M}aslov index for paths.
\newblock {\em Topology}, 32(4):827--844, 1993.

\bibitem[Sch00]{schwarz2000action}
Matthias Schwarz.
\newblock On the action spectrum for closed symplectically aspherical
  manifolds.
\newblock {\em Pacific Journal of Mathematics}, 193(2):419--461, 2000.

\bibitem[Sey15]{seyfaddini2014spectral}
Sobhan Seyfaddini.
\newblock Spectral killers and {P}oisson bracket invariants.
\newblock {\em Journal of Modern Dynamics}, 9:51--66, 2015.

\bibitem[Sik94]{sikorav1994some}
Jean-Claude Sikorav.
\newblock Some properties of holomorphic curves in almost complex manifolds.
\newblock In {\em Holomorphic curves in symplectic geometry}, pages 165--189.
  Springer, 1994.

\bibitem[Tan21]{tanny2021max}
Shira Tanny.
\newblock A max inequality for spectral invariants of disjointly supported
  {H}amiltonians.
\newblock {\em arXiv preprint arXiv:2102.07487, to appear in Journal of
  Symplectic Geometry}, 2021.

\bibitem[Ush09]{usher2009floer}
Michael Usher.
\newblock Floer homology in disk bundles and symplectically twisted geodesic
  flows.
\newblock {\em Journal of Modern Dynamics}, 3(1):61, 2009.

\bibitem[Ush11]{usher2011boundary}
Michael Usher.
\newblock Boundary depth in {F}loer theory and its applications to
  {H}amiltonian dynamics and coisotropic submanifolds.
\newblock {\em Israel Journal of Mathematics}, 184(1):1, 2011.

\end{thebibliography}

\end{document}